\let\orgdescriptionlabel\descriptionlabel
\renewcommand*{\descriptionlabel}[1]{%
  \let\orglabel\label
  \let\label\@gobble
  \phantomsection
  \protected@edef\@currentlabel{#1}%
  \let\label\orglabel
  \orgdescriptionlabel{#1}%
}
\def\paragraph{
	\@startsection{paragraph}{4}
	\z@{.5\linespacing\@plus.7\linespacing}{-.5em}%
	{\normalfont\itshape}}
\newtheorem{thm}{Theorem}
\theoremstyle{plain}
\newtheorem{conjecture}{Conjecture}
\newtheorem{cor}[thm]{Corollary}
\newtheorem{lem}[thm]{Lemma}
\newtheorem{prop}[thm]{Proposition}
\theoremstyle{remark}
\newtheorem{rem}[thm]{Remark}
\newtheorem*{acknowledgements}{Acknowledgements}
\newcommand{\vect}[1]{{\boldsymbol{#1}}}
\newcommand{\arccosh}{\mathop{\rm arccosh}\nolimits}
\renewcommand{\mod}{\mathop{\rm mod}\nolimits}
\newcommand{\sign}{\mathop{\rm sign}\nolimits}
\renewcommand{\div}{\, | \,}
\newcommand{\notdiv}{\mathopen{\mathchoice
             {\not{|}\,}
             {\not{|}\,}
             {\!\not{\:|}}
             {\not{|}}
             }}
\newcommand{\im}{\mathop{{\rm Im}}\nolimits}
\newcommand{\GL}[1]{\mathop{\rm GL}_{#1} \nolimits}
\newcommand{\SL}[1]{\mathop{\rm SL}_{#1} \nolimits}
\newcommand{\PGL}[1]{\mathop{\rm PGL}_{#1} \nolimits}
\newcommand{\SO}[1]{\mathop{\rm SO}_{#1} \nolimits}
\newcommand{\Mat}[2]{\mathop{\rm Mat}_{#1 \times #2} \nolimits}
\DeclareMathOperator{\JL}{JL}
\newcommand{\tr}{\mathop{\rm tr}\nolimits}
\renewcommand{\Im}{\mathop{\rm Im}\nolimits}
\renewcommand{\hat}{\widehat}
\newcommand{\quotient}[2]{
        \mathchoice
            {
                \text{\raise1ex\hbox{$#1$}\Big/\lower1ex\hbox{$#2$}}%
            }
            {
                #1\,/\,#2
            }
            {
                #1\,/\,#2
            }
            {
                #1\,/\,#2
            }
    }
\newcommand{\lquotient}[2]{
        \mathchoice
            {
                \text{\lower1ex\hbox{$#1$}\Big \backslash \raise01ex\hbox{$#2$}}%
            }
            {
                #1\,\backslash\,#2
            }
            {
                #1\,\backslash\,#2
            }
            {
                #1\,\backslash\,#2
            }
    }
\newcommand{\rquotient}[2]{
        \mathchoice
            {
                \text{\raise01ex\hbox{$#1$}\Big/\lower1ex\hbox{$#2$}}%
            }
						{
                #1\,/\,#2
            }
            {
                #1\,/\,#2
            }
            {
                #1\,/\,#2
            }
    }
\newcommand{\lrquotient}[3]{
        \mathchoice
            {
                \text{\lower1ex\hbox{$#1$}\Big \backslash \raise01ex\hbox{$#2$}\Big/\lower1ex\hbox{$#3$}}%
            }
            {
                #1\,\backslash\,#2\,/\,#3
            }
            {
                #1\,\backslash\,#2\,/\,#3
            }
            {
                #1\,\backslash\,#2\,/\,#3
            }
    }
\newcommand{\sm}{\left(\begin{smallmatrix}}
\newcommand{\esm}{\end{smallmatrix}\right)}
\newcommand{\bpm}{\begin{pmatrix}}
\newcommand{\ebpm}{\end{pmatrix}}
\DeclareFontFamily{U}{matha}{\hyphenchar\font45}
\DeclareFontShape{U}{matha}{m}{n}{
	<5> <6> <7> <8> <9> <10> gen * matha
	<10.95> matha10 <12> <14.4> <17.28> <20.74> <24.88> matha12
}{}
\DeclareSymbolFont{matha}{U}{matha}{m}{n}
\DeclareFontFamily{U}{mathx}{\hyphenchar\font45}
\DeclareFontShape{U}{mathx}{m}{n}{
	<5> <6> <7> <8> <9> <10>
	<10.95> <12> <14.4> <17.28> <20.74> <24.88>
	mathx10
}{}
\DeclareSymbolFont{mathx}{U}{mathx}{m}{n}
\DeclareMathSymbol{\obot}         {2}{matha}{"6B}
\DeclareMathSymbol{\bigobot}       {1}{mathx}{"CB}
\numberwithin{equation}{section}
\begin{document}

\bibliographystyle{alpha}

\title{Theta functions, fourth moments of eigenforms, and the sup-norm problem III}

\author{Raphael S. Steiner}
\address{Computing Systems Lab, Huawei Zurich Research Center, Thurgauerstrasse 80, 8050 Zurich, CH}%
\email{raphael.steiner.academic@gmail.com}%



\dedicatory{Dedicated to Professor Peter Sarnak on the occasion of his seventieth birthday.}

\begin{abstract} In the prequel, a sharp bound in the level aspect on the fourth moment of Hecke--Maa{\ss} forms with an inexplicit (in fact exponential) dependency on the eigenvalue was given. In this paper, we develop further the framework of explicit theta test functions in order to capture the eigenvalue more precisely. We use this to reduce a sharp hybrid fourth moment bound to an 
	intricate counting problem. Unconditionally, we give a hybrid bound, which is sharp in the level aspect and with a slightly larger than convex dependency on the eigenvalue.
\end{abstract}
\maketitle


\section{Introduction}

Let $\varphi$ be an $L^2$-normalised eigenfunction of the Laplace--Beltrami operator $-\Delta$ (Maa{\ss} form) on a hyperbolic surface $\Gamma \backslash \mathbb{H}$ equipped with an invariant probability measure. A classical question in harmonic analysis or quantum chaos is the study of $L^p$-norms of $\varphi$. Here, we consider the $L^{\infty}$-norm of $\varphi$ also known as the sup-norm problem. For general compact surfaces, one has the bound (see \cite{SeegerSogge}):
\begin{equation} \label{eq:conv-eigenvalue}
	\| \varphi \|_{\infty} \le C_{\Gamma} (1+|\lambda_{\varphi}|)^{\frac{1}{4}} \| \varphi \|_2,
\end{equation}
for some constant $C_{\Gamma}$ depending on $\Gamma$ and where $\lambda_{\varphi}$ denotes the $(-\Delta)$-eigenvalue of $\varphi$.
Here, $\lambda_{\varphi}$ denotes the $(-\Delta)$-eigenvalue of $\varphi$. In this generality, the bound is sharp and equality is attained for the $2$-dimensional sphere for example. For hyperbolic surfaces, however, it is believed that this bound is far from sharp and that it should hold with an exponent strictly smaller than $\frac{1}{4}$ \cite{IS95}. Without further asumptions beyond constant negative curvature, only a factor of $\log(2+|\lambda_{\varphi}|)$ has been saved over the bound \eqref{eq:conv-eigenvalue}, which is a result due to B\'erard \cite{Berard77}.

In an arithmetic setting, i.e.\@ the underlying lattice $\Gamma$ is arithmetic and the form $\varphi$ is a (new) Hecke--Maa{\ss} eigenform, more can be said. In a breakthrough paper, Iwaniec and Sarnak \cite{IS95} showed
\begin{equation} \label{eq:IS95}
	\| \varphi \|_{\infty} \le C_{\Gamma,\epsilon} (1+|\lambda_{\varphi}|)^{\frac{5}{24}+\epsilon} \| \varphi \|_2,
\end{equation}
for any $\epsilon > 0$ and a constant $C_{\Gamma, \epsilon}$ depending only on $\Gamma$ and $\epsilon$.
Their amplification technique has been widely adapted since then. The family of hyperbolic surfaces $\Gamma_0(N) \backslash \mathbb{H}$, where $\Gamma_0(N) = \{ \sm a & b \\ c & d \esm | c \equiv 0 \mod(N)  \}$,  in particular, has received much attention. Here, the interest lies not only in the dependence on $\lambda_{\varphi}$, but also the dependence on $N$, respectively the co-volume of $\Gamma_0(N)$, which is referred to as the level aspect. Assume from now on that $\Gamma_0(N)\backslash \mathbb{H}$, respectively $\Gamma \backslash \mathbb{H}$, is equipped with the invariant probability measure. Nowadays standard techniques (see for example \cite{BH10}) show that one may take $C_{\Gamma_0(N)}= C_{\epsilon} N^{\frac{1}{2}+\epsilon}$ for any $\epsilon >0$ in \eqref{eq:conv-eigenvalue}. This is commonly referred to as the convex, local, or sometimes ``trivial'' bound. In the case where $N$ is not square-free, the convex bound may be even further decreased (see \cite{MarshallConvex}). In a pioneering paper, Blomer and Holowinsky \cite{BH10} managed to prove the first sub-convex bound in the level aspect for $N$ square-free. Namely, they proved
\begin{equation}\begin{aligned}  \label{eq:BH}
	\| \varphi \|_{\infty} &\ll_{\epsilon} N^{\frac{1}{2}-\frac{1}{37} + \epsilon} (1+|\lambda_{\varphi}|)^{\frac{11}{4} + \epsilon} \| \varphi \|_2 \text{  and} \\
	\| \varphi \|_{\infty} &\ll_{\epsilon} N^{\frac{1}{2} + \epsilon} (1+|\lambda_{\varphi}|)^{\frac{5}{24} + \epsilon} \| \varphi \|_2,	
\end{aligned}\end{equation}
where we've introduced Vinogradov's notation, see \S\ref{sec:vinogradov-notation}.
Subsequent improvements in the square-free level aspect have been made by Helfgott--Ricotta\footnote{unpublished.}, Templier \cite{HT1}, Harcos--Templier \cite{HT2,HT2}, culminating in the hybrid bound by Templier \cite{Thybrid}:
\begin{equation} \label{eq:TempHybrid}
	\| \varphi \|_{\infty} \ll_{\epsilon} N^{\frac{1}{3} + \epsilon} (1+|\lambda_{\varphi}|)^{\frac{5}{24} + \epsilon} \| \varphi \|_2.
\end{equation}
For for a more detailed discussion on the level aspect and sub-convex bounds in the non-square-free level aspect we refer to \cite{Saha14, Saha20, SahaHu20} and references therein/-to. From now on, we assume the level $N$ to be square-free.

Recently, Khayutin, Nelson, and the author \cite{SupThetaII}, building on earlier work \cite{SupThetaI, S3sup}, have tackled the fourth moment in the level aspect and were able to bound it sharply. As a consequence, they proved for any $\Lambda >0$ and $\varphi$ with $|\lambda_{\varphi}| \le \Lambda$ that
\begin{equation}
	\label{eq:KNS}
	\|\varphi\|_{\infty} \ll_{\Lambda,\epsilon} N^{\frac{1}{4}+\epsilon},
\end{equation}
The dependence of the implied constant on $\Lambda$, respectively $\lambda_{\varphi}$, in \eqref{eq:KNS} is in fact of exponential nature and thus far from the bounds \eqref{eq:BH} and \eqref{eq:TempHybrid}. In this sequel, we remedy this and give a polynomial dependence on $\lambda_{\varphi}$.

\begin{thm} \label{thm:hybrid-indiv} Let $N$ be a square-free integer and $\varphi$ a Hecke--Maa{\ss} newform on $\Gamma_0(N) \backslash \mathbb{H}$ with $(-\Delta)$-eigenvalue $\lambda_{\varphi}$. Then,
	$$
	\| \varphi \|_{\infty} \ll_{\epsilon} N^{\frac{1}{4}+\epsilon} (1+|\lambda_{\varphi}|)^{\frac{3}{8}+\epsilon} \|\varphi\|_2.
	$$
\end{thm}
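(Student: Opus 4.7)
The plan is to build on the theta test function framework of \cite{SupThetaI, SupThetaII}, in which the sup-norm is controlled via a sharp fourth moment estimate combined with positivity: a bound
\[
\sum_{f \in \mathcal{F}} |f(z)|^4 \ll N^{1+\epsilon}(1+|\lambda_{\varphi}|)^{\frac{3}{2}+\epsilon}
\]
over a suitable spectral family $\mathcal{F}$ of Hecke--Maa{\ss} newforms containing $\varphi$ immediately yields the theorem by dropping every term except the one at $\varphi$ and taking fourth roots. In the prequel \cite{SupThetaII}, such a bound was proved with the correct $N$-dependence but with an only implicit, exponentially large, eigenvalue factor. My task is therefore to redesign the archimedean test function so that its eigenvalue dependence is polynomial and explicit, and then to push the resulting machinery through to its geometric conclusion.

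First, I set up the amplified pre-trace formula in the level aspect to isolate $\varphi$ among newforms of level dividing $N$ inside a narrow spectral window around $\lambda_{\varphi}$, and apply Cauchy--Schwarz followed by positivity to bound $|\varphi(z)|^4$ by a weighted fourth moment $\sum_{f} w(f)\,|f(z)|^4$ over that window. Via the Shimizu/seesaw correspondence between the split orthogonal group of signature $(2,2)$ and $\SL{2} \times \SL{2}$, this fourth moment is reinterpreted as a matrix coefficient of a theta kernel, and the archimedean test function enters as a spherical vector in the associated Weil representation. The key innovation over \cite{SupThetaII} is to choose this vector so that its spherical transform is a sharp cutoff in a window of size $\sim 1$ around the spectral parameter attached to $\lambda_{\varphi}$, while its spatial size in the orthogonal variable is as small as possible; this is exactly what the prequel lacked.

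Next, I unfold to the geometric side, producing a sum over $\gamma \in M_2(\BZ)$ of controlled determinant (restricted to the Hecke congruence data at level $N$) weighted by the new archimedean test function evaluated at the displacement of $\gamma\cdot z$ from $z$. With the refined test function from the previous step, the sum localises to $\gamma$ with both a controlled hyperbolic displacement $d(z,\gamma z)$ and a controlled trace, which is precisely the ``intricate counting problem'' referred to in the abstract. I then estimate this count unconditionally, losing a factor $(1+|\lambda_{\varphi}|)^{1/8+\epsilon}$ over the conjecturally sharp bound, which produces the $(1+|\lambda_{\varphi}|)^{3/2+\epsilon}$ appearing in the fourth moment above.

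The principal obstacle is the counting step: representation numbers of binary quadratic forms, Atkin--Lehner theory used to isolate newforms, and a careful treatment of the Hecke congruence condition modulo $N$ must be orchestrated to preserve the sharp level dependence, while the relatively weak archimedean localisation coming from the merely polynomial (rather than extremely rapid) decay of the spherical transform at scale $\sqrt{1+|\lambda_{\varphi}|}$ forces a loss of precisely the shape $(1+|\lambda_{\varphi}|)^{1/8}$ in the eigenvalue exponent. Balancing these two features, particularly at the boundary between ``small'' and ``large'' $\gamma$ where the test function is still of substantial size, is the delicate part of the argument.
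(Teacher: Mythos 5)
Your overall architecture (theta correspondence, a redesigned archimedean test function with polynomial eigenvalue dependence, reduction to a matrix counting problem, and an unconditional loss of $(1+|\lambda_{\varphi}|)^{1/2}$ in the fourth moment leading to the exponent $\tfrac{3}{8}$) matches the paper, and the arithmetic from a fourth moment of size $N^{1+\epsilon}\Lambda^{\frac{3}{2}+\epsilon}$ down to the sup-norm bound is correct. However, there are two genuine gaps. First, the theta/$L^2$ method does not give the clean bound $\sum_f |f(z)|^4 \ll N^{1+\epsilon}\Lambda^{\frac{3}{2}+\epsilon}$ uniformly in $z$ on the non-compact quotient $\Gamma_0(N)\backslash\mathbb{H}$: the geometric side contains contributions from (conjugated) upper-triangular matrices that grow with the Atkin--Lehner height $H(z)$, and what is actually proved (Theorem \ref{thm:fourth-hybrid unconditional}) is a fourth moment of \emph{differences} $(|\varphi(z)|^2-|\varphi(w)|^2)^2$ carrying an extra term $\frac{N}{\Lambda^{1/2}}(H(z)^2+H(w)^2)$. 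Deducing the individual bound therefore requires a gluing step: Templier's Whittaker-expansion bound (Lemma \ref{lem:split-y-large}) already gives the theorem when $H(z)\ge (1+|\lambda_{\varphi}|)^{\frac{1}{4}}N^{-\frac{1}{2}}$, and otherwise one picks an auxiliary point $w$ exactly at that height, bounds $|\varphi(w)|$ by Lemma \ref{lem:split-y-large}, and combines the difference fourth moment with the triangle inequality. Your proposal, which simply ``drops all terms but one,'' assumes a uniform-in-$z$ statement that the method does not deliver and never addresses the cusp.

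Second, your archimedean vector --- a spherical transform that is a sharp cutoff in a window of size $\sim 1$ around $t_{\varphi}$, with an amplifier to isolate $\varphi$ --- is the short-window approach, which the paper explicitly states it was \emph{unable} to carry out for indefinite quaternion algebras: the resulting oscillatory integrals, of the shape $\int e^{-\frac{1}{2}S^2(r\pm \frac{1}{T}i-\ell)^2}e^{i(\beta\sinh(\ell)\pm T(r-\ell))}d\ell$, resist a sufficiently uniform stationary-phase analysis. The paper instead takes a \emph{long} window, $h(t;\tau)=\cosh\bigl((\tfrac{\pi}{2}-\tfrac{1}{T})t\bigr)\cdot 2\sqrt{|\tau|}K_{it}(2\pi|\tau|)$ with $T\asymp\Lambda^{1/2}$, which weights all $|t_{\varphi}|\le T$ essentially equally (compensating the $\cosh(\pi t_{\varphi})^{-1}$ coming from $\|\varphi^{\JL}\|_2^2$), and it uses no amplifier anywhere. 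Relatedly, the geometric side is not a first-moment count of single matrices with controlled displacement and trace (the Iwaniec--Sarnak picture you describe) but a second-moment count of pairs $(\gamma_1,\gamma_2)$ with $\det\gamma_1=\det\gamma_2$, arising from Parseval applied to $\|\Theta(g_1,\cdot)-\Theta(g_2,\cdot)\|_2^2$ and refined by the condition that $|\diamondsuit(\gamma_1)-\diamondsuit(\gamma_2)|$ be small. As written, your plan stalls exactly at the analytic step the paper identifies as open, and at the cusp.
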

The method of proof follows the prequel \cite{SupThetaII}, in which the theta correspondence and the exceptional isomorphism ${\rm PGSO}(2,2) \cong {\rm PGL}_2 \times {\rm PGL}_2$ is used to derive a geometric expression for a fourth moment leading to a second moment matrix counting problem. The particular choice of theta function plays a crucial role in the weighting of the Hecke--Maa{\ss} forms $\varphi$. In the prequel \cite{SupThetaII}, a crude test function with everywhere positive weighting was chosen at the infinite place. This allowed for a simpler treatment of the reduction to a counting problem as well as a simpler (but highly non-trivial) counting problem. This, however, came at the expense of an exponential dependency on the spectral paramenter. Here, we extend the family of permitted test functions at the infinite place 
to allow for more tailored test functions which, in turn, enable us to capture the eigenvalue aspect more precisely. Unconditionally, we are able to prove:

\begin{thm} \label{thm:fourth-hybrid unconditional} Let $\Lambda > 1$ and $z,w \in \mathbb{H}$. Further, let $N$ be a square-free integer and $\{\varphi\}$ an orthonormal set of Hecke--Maa{\ss} newforms on $\Gamma_0(N) \backslash \mathbb{H}$ with $(-\Delta)$-eigenvalues $\{\lambda_{\varphi}\}$ bounded by $\Lambda$. Then,
	$$
	\sum_{\substack{\varphi \\ |\lambda_{\varphi}| \le \Lambda}} \left( |\varphi(z)|^2 - |\varphi(w)|^2 \right)^2 \ll_{\epsilon} N^{1+\epsilon} \Lambda^{\frac{3}{2}+\epsilon} \left(1+\frac{N}{\Lambda^{\frac{1}{2}}} \left( H(z)^2+H(w)^2 \right) \right),
	$$
	where
	\begin{equation} \label{eq:intro-H}
	H(z) = \max_{\gamma \in A_0(N)} \Im(\gamma z) 
	\end{equation}
	and the minimum is taken over all Atkin--Lehner operators $A_0(N)$ of level $N$.
\end{thm}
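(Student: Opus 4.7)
The plan is to refine the theta-correspondence approach of \cite{SupThetaII} by replacing the crude archimedean test function used there with one tailored to the spectral window $[-\Lambda,\Lambda]$. Via the exceptional isomorphism ${\rm PGSO}(2,2) \cong {\rm PGL}_2 \times {\rm PGL}_2$ and the Weil representation on $M_2(\BA)$, one attaches to a Schwartz--Bruhat function $\phi = \phi_\infty \otimes \phi_{\mathrm{fin}}$ a theta kernel of the shape
\[
\Theta_\phi(g_1,g_2) = \sum_{\gamma \in M_2(\BQ)} \phi(g_1^{-1}\gamma g_2).
\]
I would take $\phi_{\mathrm{fin}}$ to be the characteristic function of the relevant Eichler order of level $N$, exactly as in the prequel. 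The novelty, flagged in the introduction, is at the archimedean place: I would choose $\phi_\infty$ non-negative on $M_2(\BR)$, scaled to matrices of size $\sim \Lambda^{1/2}$, and arranged so that the associated spectral weight $h(\lambda_\varphi)$ is non-negative with $h(\lambda_\varphi) \gg 1$ uniformly for $|\lambda_\varphi| \le \Lambda$. A natural candidate is a scaled Gaussian in the entries of $M_2(\BR)$, possibly multiplied by a polynomial, designed so that its image under the Weil representation acts on principal-series representations as an approximate cut-off to spectral parameter $\lesssim \Lambda^{1/2}$.

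With such $\phi$ in hand, Parseval on $\Gamma_0(N) \backslash \BH$ in one variable of $\Theta_\phi$ yields a pretrace-type identity whose spectral side contains the non-negative sum
\[
\sum_\varphi h(\lambda_\varphi)^2 |\varphi(z)|^2 |\varphi(w)|^2 + \text{(non-negative Eisenstein contribution)}.
\]
Applying the identity at $(z,z)$, $(w,w)$ and $(z,w)$ and combining the three to assemble the square $(|\varphi(z)|^2-|\varphi(w)|^2)^2$ with positive spectral weight, one obtains an upper bound for $\sum_\varphi h(\lambda_\varphi)^2 (|\varphi(z)|^2-|\varphi(w)|^2)^2$ purely in geometric terms, hence for the left-hand side of the theorem by the lower bound $h(\lambda_\varphi) \gg 1$ on the relevant spectral window.

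The remaining geometric estimate is a second-moment matrix counting problem in the Eichler order, whose arguments depend on $z,w$ through their Iwasawa coordinates. I would split the contributions into a Plancherel-type diagonal part coming from matrices of bounded determinant or near-scalar shape, which by a count of size $\asymp N\Lambda$ weighted by archimedean density $\asymp \Lambda^{1/2}$ produces the main term $N^{1+\epsilon}\Lambda^{3/2+\epsilon}$; and an off-diagonal part from matrices that translate $z$ (resp.\ $w$) into a small neighbourhood of itself or of an Atkin--Lehner translate, controlled by standard volume estimates in the fundamental domain as in \cite{SupThetaII,Thybrid}, giving $N^{2+\epsilon}\Lambda^{1+\epsilon}(H(z)^2 + H(w)^2)$. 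The difference structure $|\varphi(z)|^2 - |\varphi(w)|^2$ is essential to kill the $z$- and $w$-independent bulk contributions on the geometric side that would otherwise survive.

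The principal obstacle is the construction of $\phi_\infty$: one must simultaneously arrange non-negativity and a sharp lower bound $h(\lambda_\varphi) \gg 1$ on $[-\Lambda,\Lambda]$, along with a matrix-space profile concentrated enough to make the counting comparable to Plancherel yet regular enough for the off-diagonal analysis of the prequel to survive. In \cite{SupThetaII} this was avoided with a very spread-out $\phi_\infty$ whose spectral transform was $\gg 1$ only on a bounded window, producing the exponential dependence on $\Lambda$. Extending the admissible family of $\phi_\infty$ to permit truly spectrally localised weights, and verifying that the resulting geometric sums still admit a clean split into main plus controlled off-diagonal terms, is the technical heart of the argument.
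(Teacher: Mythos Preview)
Your overall framework---theta correspondence, Bessel/Parseval to reduce to an $L^2$-bound on a theta kernel, then second-moment matrix counting in the Eichler order---matches the paper. The difference structure and the appearance of $H(z),H(w)$ from the Siegel-domain analysis are also right in spirit.

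However, your proposed archimedean test function is where the proposal diverges from what actually works, and this is not a minor detail. You suggest a non-negative scaled Gaussian on $M_2(\BR)$ arranged so that the spectral transform $h(\lambda_\varphi)\gg 1$ on the window. The paper does \emph{not} use non-negativity of $\phi_\infty$, and a scaled Gaussian does not solve the problem you flag as the principal obstacle. The issue is that the naive Gaussian already has Selberg/Harish-Chandra transform $2\sqrt{\tau}K_{it}(2\pi\tau)$, and after comparing arithmetic to $L^2$ normalisation one picks up a factor $\|\varphi^{\JL}\|_2^{-2}\asymp\cosh(\pi t_\varphi)$, which is the source of the exponential loss in the prequel. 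Scaling the Gaussian does not remove this. The paper's solution is to \emph{invert} the Selberg transform and take
\[
h(t;\tau)=\cosh\bigl((\tfrac{\pi}{2}-\tfrac{1}{T})t\bigr)\cdot 2\sqrt{|\tau|}\,K_{it}(2\pi|\tau|),
\]
so that the extra $\cosh(\alpha t)$ with $\alpha=\tfrac{\pi}{2}-\tfrac{1}{T}$ precisely cancels the exponential decay of $\|\varphi^{\JL}\|_2^2$ for $|t_\varphi|\le T$. The resulting $\Phi_\infty$ is not non-negative and not even smooth on the locus $P(\gamma)=|\tau(\gamma)|$; one must show by a Green's-theorem limiting argument that it nevertheless lies in the correct $K_\infty$-isotypic subspace $V_{0,2\pi}$ of the Weil representation. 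Positivity on the spectral side then comes from Bessel's inequality applied to $\|\Theta(g_1,\cdot)-\Theta(g_2,\cdot)\|_2^2$, not from positivity of $\phi_\infty$.

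Your account of the counting is also too coarse to explain the exponent $\tfrac{3}{2}$. It does not arise from a clean ``diagonal $\asymp N\Lambda$ times density $\Lambda^{1/2}$'' heuristic. The paper writes $\Phi_\infty$ via an Abel-transform integral, splits that integral into several regimes, and for each regime reduces to a dyadic count of pairs $(\gamma_1,\gamma_2)$ in regions $\Omega^\star(\delta,L)\cup\Psi^\star(\delta,L)$ with $\det\gamma_1=\det\gamma_2$. The unconditional $\Lambda^{3/2}$ comes from applying the available second-moment lattice bounds (Propositions~\ref{prop:Omega-lattice-counting} and~\ref{prop:Psi-lattice-counting}) without the finer $|\diamondsuit(\gamma_1)-\diamondsuit(\gamma_2)|$ constraint that the oscillation of $\Phi_\infty$ actually imposes; exploiting that constraint (Conjecture~\ref{conj:heart-lattice-counting}) is exactly what would bring the exponent down to $1$.
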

Whilst the exponent $\frac{3}{2}$ in the eigenvalue is larger than the conjectured and optimal exponent of $1$ (see \cite[Conj.\@ 1.1.]{ChamCorrSumsOfSquares} for an explicit variant of this conjecture), it marks a polynomial dependence that may prove useful in certain applications. The reason for the larger than optimal dependence on the eigenvalue is our inability to bound the resulting counting problem adequately. Consequently, we also establish a conditional version of Theorem \ref{thm:fourth-hybrid unconditional} with the optimal dependency on the eigenvalue that is contingent on adequate bounds on second moment matrix count which we will discuss in more detail in Section \ref{sec:lattice-counting}.

\begin{thm} \label{thm:fourth-hybrid-conditional} Let $\Lambda > 1$ and $z,w \in \mathbb{H}$. Further, let $N$ be a square-free integer and $\{\varphi\}$ an orthonormal set of Hecke--Maa{\ss} newforms on $\Gamma_0(N) \backslash \mathbb{H}$ with $(-\Delta)$-eigenvalues $\{\lambda_{\varphi}\}$ bounded by $\Lambda$. Assume the counting conjecture, Conjecture \ref{conj:heart-lattice-counting}, then
	$$
	\sum_{\substack{\varphi \\ |\lambda_{\varphi}| \le \Lambda}} \left( |\varphi(z)|^2 - |\varphi(w)|^2 \right)^2 \ll_{\epsilon} N^{1+\epsilon} \Lambda^{1+\epsilon} \left(1+ N \left( H(z)^2+H(w)^2 \right) \right),
	$$
	where $H(z)$ is as in \eqref{eq:intro-H}.
\end{thm}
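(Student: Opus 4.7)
The plan is to re-run the proof of Theorem~\ref{thm:fourth-hybrid unconditional} essentially verbatim, and at the single point where an unconditional estimate on the second-moment matrix count is used, to substitute the sharper bound asserted by Conjecture~\ref{conj:heart-lattice-counting}. Concretely, I would select an admissible archimedean theta test function $h$ from the enlarged class of test functions developed in the body of this paper (the whole point of going beyond the everywhere-positive weighting of \cite{SupThetaII}), chosen so that its spectral transform majorises the indicator of the window $[-\Lambda,\Lambda]$ while its geometric realisation is concentrated on scale $\Lambda^{1/2}$. Via the exceptional isomorphism $\mathrm{PGSO}(2,2)\cong\mathrm{PGL}_2\times\mathrm{PGL}_2$ the corresponding theta kernel is spectrally positive, and integrating it against the mixed fourth moment
$$
\sum_{|\lambda_{\varphi}|\le\Lambda}\bigl(|\varphi(z)|^2-|\varphi(w)|^2\bigr)^2
$$
converts this spectral quantity into a sum over a quaternionic lattice depending on $z$, $w$, and $N$.

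The difference $|\varphi(z)|^2-|\varphi(w)|^2$ kills the identity-orbit contribution, leaving a genuinely off-diagonal geometric sum. After decomposing along Atkin--Lehner cosets (which is what produces the terms involving $H(z)$ and $H(w)$), performing a dyadic decomposition in the norm of the lattice vectors, and carrying out the archimedean integration against $h$, the estimate reduces to the second-moment matrix counting problem formulated in Section~\ref{sec:lattice-counting}. Invoking Conjecture~\ref{conj:heart-lattice-counting} at this step removes the two factors of $\Lambda^{1/2}$ that had to be conceded in the unconditional argument---one coming from the width of the spectral box, the other from the comparison between the constant and the $H(z)^2+H(w)^2$ terms---and reassembling yields the bound $N^{1+\epsilon}\Lambda^{1+\epsilon}\bigl(1+N(H(z)^2+H(w)^2)\bigr)$ exactly as claimed.

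Because the whole analytic framework (test function design, theta kernel manipulations, Atkin--Lehner orbit decomposition, and reduction to the matrix count) is already in place from the proof of Theorem~\ref{thm:fourth-hybrid unconditional}, there is no substantive new obstacle to overcome once Conjecture~\ref{conj:heart-lattice-counting} is granted. The only point that needs to be checked carefully is that the saving offered by the conjecture is applied uniformly across the full Atkin--Lehner orbit and across all dyadic scales, so that no spurious loss reappears when translating from the $\gamma$-translate realising the maximum in the definition \eqref{eq:intro-H} back to the original points $z,w$. This uniformity should be built into the formulation of Conjecture~\ref{conj:heart-lattice-counting}, so that the conditional theorem follows formally by combining the conjecture with the machinery already developed for the unconditional Theorem~\ref{thm:fourth-hybrid unconditional}.
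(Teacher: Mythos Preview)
Your plan is correct and coincides with the paper's: in \S\ref{sec:proof-main-thms} both Theorem~\ref{thm:fourth-hybrid unconditional} and Theorem~\ref{thm:fourth-hybrid-conditional} are deduced directly from the reduction Proposition~\ref{prop:long-window-red-counting}, the only difference being whether one feeds in the unconditional Propositions~\ref{prop:Omega-lattice-counting}--\ref{prop:Psi-lattice-counting} or Conjecture~\ref{conj:heart-lattice-counting} for the second-moment matrix count. Two small descriptive points are off: the $H(z),H(w)$ terms do not arise from an Atkin--Lehner coset decomposition on the spectral side but are already built into the counting statement (Conjecture~\ref{conj:heart-lattice-counting} is formulated for arbitrary $g$ and carries the $H(g)$ dependence), and the conjecture saves only a single factor of $\Lambda^{1/2}$, in the main term --- the cuspidal contribution $N^{2+\epsilon}\Lambda^{1+\epsilon}(H(z)^2+H(w)^2)$ is of identical strength in both theorems.
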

We also establish a version of Theorems \ref{thm:hybrid-indiv}-\ref{thm:fourth-hybrid-conditional} for a more general class of arithmetic lattices.

\begin{thm}
	\label{thm:Eichler-order-main-thm}
	Let $R$ be an Eichler order of square-free level\footnote{See \S\ref{sec:notation} for the meaning of level in this context.} in a non-split indefinite quaternion algebra $B$ over $\mathbb{Q}$. Let $\Gamma$ be the (reduced) norm one elements inside $\PGL{2}(\mathbb{R}) \cong {\rm PB}^{\times}(\mathbb{R})$, which is a lattice. Let $V$ denote the co-volume of $\Gamma$. Let $\Lambda > 1$ and $z,w \in \mathbb{H}$. Further, let $\{\varphi\}$ an orthonormal set of Hecke--Maa{\ss} newforms on $\Gamma \backslash \mathbb{H}$ with $(-\Delta)$-eigenvalues $\{\lambda_{\varphi}\}$ bounded by $\Lambda$. Then,
	\begin{equation} \label{eq:4th-moment-thm-Eichler}
	\sum_{\substack{\varphi \\ |\lambda_{\varphi}| \le \Lambda}} \left( |\varphi(z)|^2 - |\varphi(w)|^2 \right)^2 \ll_{\epsilon} V^{1+\epsilon} \Lambda^{\frac{3}{2}+\epsilon}.
	\end{equation}
	Consequently, for an individual Hecke--Maa{\ss} newform $\varphi$ on $\Gamma\backslash\mathbb{H}$, we have
	\begin{equation} \label{eq:indv-thm-Eichler}
	\| \varphi \|_{\infty} \ll_{\epsilon} V^{\frac{1}{4}+\epsilon} (1+|\lambda_{\varphi}|)^{\frac{3}{8}}  \|\varphi\|_2.
	\end{equation}
	Assuming the counting conjecture, Conjecture \ref{conj:heart-lattice-counting}, then the exponents $\frac{3}{2}$ of $\Lambda$ in  \eqref{eq:4th-moment-thm-Eichler}, respectively $\frac{3}{8}$ of $(1+|\lambda_{\varphi}|)$ in \eqref{eq:indv-thm-Eichler}, may be replaced by $2$, respectively $\frac{1}{4}$.
\end{thm}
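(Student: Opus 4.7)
The plan is to follow the same framework as for Theorems \ref{thm:fourth-hybrid unconditional} and \ref{thm:fourth-hybrid-conditional}, adapted to the setting of an Eichler order $R$ in the non-split indefinite quaternion algebra $B/\mathbb{Q}$. The key structural input is that $B$, equipped with its reduced norm $\nr$, is a four-dimensional quadratic space of signature $(2,2)$ over $\mathbb{R}$, and the exceptional isomorphism $\mathrm{PGSO}(B) \cong \mathrm{PB}^{\times} \times \mathrm{PB}^{\times}$ plays the same role as $\mathrm{PGSO}(2,2) \cong \PGL{2} \times \PGL{2}$ did in the $\Gamma_0(N)$ case. Since $B$ is non-split, the quotient $\Gamma\backslash\mathbb{H}$ is compact, which actually streamlines the argument: there are no cusps, no Eisenstein contribution, and in particular no ``cusp excursion'' term $H(z)$, explaining why the right-hand side of \eqref{eq:4th-moment-thm-Eichler} is free of the factor $1+VN^{-1}(H(z)^2+H(w)^2)$ appearing in the $\Gamma_0(N)$ statements.

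First I would construct the theta kernel attached to $R$: at finite places, take the indicator of $\hat{R}$ (which at split primes $p \nmid \disc(B)$ recovers the Eichler order shape used in the prequel, and at the finitely many ramified primes uses the unique maximal order), and at the infinite place use the refined family of test functions developed earlier in the paper, tuned to the spectral parameter $\Lambda$. Next, unfolding the fourth moment $\sum_{|\lambda_\varphi|\le\Lambda}(|\varphi(z)|^2-|\varphi(w)|^2)^2$ through the theta correspondence produces a geometric expression over elements of $R$ weighted by the archimedean test function evaluated at pairs $(z,w)$ and $(\gamma z, \gamma w)$. The spectral side captures the desired fourth moment (with the sharp weights on forms of eigenvalue up to $\Lambda$), and the geometric side reduces to a second moment matrix count of pairs of elements of $R$ satisfying constraints on norm, trace, and distance, entirely analogous to the count treated in the prequel and the preceding sections.

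The key step is then to show that the counting bounds go through uniformly in $V$: since $R$ is locally $M_2(\mathbb{Z}_p)$ or an Eichler subalgebra thereof at split primes, the local counts there reduce to those already handled; at the finitely many ramified primes the Eichler (=maximal) order gives only a bounded distortion that is absorbed into $V^{\epsilon}$. Applying the unconditional count produces the $V^{1+\epsilon}\Lambda^{3/2+\epsilon}$ bound of \eqref{eq:4th-moment-thm-Eichler}, and applying Conjecture \ref{conj:heart-lattice-counting} instead yields $V^{1+\epsilon}\Lambda^{1+\epsilon}$, whence the exponent $\tfrac{3}{2}$ improves to $2$ (the factor of $2$ arising from Cauchy--Schwarz loss in converting the fourth moment to an $L^\infty$ bound). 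The individual bound \eqref{eq:indv-thm-Eichler} then follows by specialisation: take $w$ far from $z$ (possible because the surface is compact of volume $V$, so one may find $w$ with $|\varphi(w)|^2 \ll \|\varphi\|_2^2$ by an averaging argument), retain only the single term $\varphi$ in the spectral sum by positivity, and solve for $|\varphi(z)|^2 \ll V^{1/2+\epsilon}\Lambda^{3/4+\epsilon}\|\varphi\|_2^2$.

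The main obstacle, mirroring the situation for $\Gamma_0(N)$, is verifying that the counting problem really is insensitive to the quaternionic setting; the bookkeeping at ramified primes and the interplay with the archimedean test function must be checked carefully, but because ramification is a bounded local phenomenon and the archimedean piece is identical to that of the $\Gamma_0(N)$ case, no essentially new difficulty beyond those already resolved for Theorems \ref{thm:fourth-hybrid unconditional}--\ref{thm:fourth-hybrid-conditional} is expected.
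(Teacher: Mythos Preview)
Your proposal is correct in substance and aligns with the paper's approach, but you frame as an ``adaptation'' what is in fact already built into the machinery. The paper's entire setup in \S\ref{sec:notation}--\S\ref{sec:longwindow} is written uniformly for an arbitrary indefinite quaternion algebra $B$ with Eichler order $R$ of square-free level; the split case $B=\Mat{2}{2}(\mathbb{Q})$ is a specialisation, not the base case. In particular, Proposition~\ref{prop:long-window-red-counting} and the counting Propositions~\ref{prop:Omega-lattice-counting}, \ref{prop:Psi-lattice-counting} are stated for general $B$, with the explicit instruction that terms involving $H(g)$ are to be omitted when $B$ is non-split. Consequently, Theorem~\ref{thm:Eichler-order-main-thm} follows from the \emph{identical} combination of Proposition~\ref{prop:long-window-red-counting} with those counting results---no separate theta kernel construction, no new local analysis at ramified primes, no verification that ``counting bounds go through uniformly'' is required, because that uniformity is already encoded in the statements. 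Your derivation of \eqref{eq:indv-thm-Eichler} by choosing $w$ with $|\varphi(w)|\le 1$ (via the probability measure) is exactly what the paper does.

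One point of confusion: your remark that under Conjecture~\ref{conj:heart-lattice-counting} ``the exponent $\tfrac{3}{2}$ improves to $2$ (the factor of $2$ arising from Cauchy--Schwarz loss\ldots)'' is garbled. The conditional fourth-moment exponent is $1$ (as you yourself compute, and as in Theorem~\ref{thm:fourth-hybrid-conditional}); the ``$2$'' in the theorem statement appears to be a typo for ``$1$''. There is no Cauchy--Schwarz step in passing from the fourth moment to the sup-norm bound---one simply drops all but a single term by positivity.
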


\subsection{The archimedean test function}

Let $B$ be an indefinite quaternion algebra over $\mathbb{Q}$ such that the completion at the infinite place $B(\mathbb{R})$ is isomorphic to the matrix algebra $\Mat{2}{2}(\mathbb{R})$. In the prequel \cite[Appendix A]{SupThetaII}, the relevant technical machinery was developped for a Schwartz function $\Phi_{\infty}$ on $\Mat{2}{2}(\mathbb{R})$ that is bi-$\SO{2}(\mathbb{R})$-invariant and satisfies a certain partial differential equation \eqref{eq:isotypical-PDE}. There, the choice 
$$
\Phi_{\infty}(\vect{x}) = e^{-\pi \|\vect{x}\|_{2}^2}
$$
was made whose ``Selberg/Harish-Chandra" transform
\begin{equation} \label{eq:intro-selberg-transform}
\int_{\SL{2}(\mathbb{R})} \tau \Phi_{\infty}(\vect{x}\gamma) \Xi_{\frac{1}{2}+it}(\gamma) d\gamma =  2\sqrt{\tau} K_{it}(2\pi \tau)
\end{equation}
matches up with the Whittaker function at $\sm \tau^{1/2} & \\ & \tau^{-1/2} \esm$ of an arithmetically normalised Hecke--Maa{\ss} newform with eigenvalue $\frac{1}{4}+t^2$. Here, $\vect{x} \in \Mat{2}{2}(\mathbb{R})$ is any matrix of determinant $\tau > 0$, $\Xi_s$ is the spherical function for the principal series on $\PGL{2}(\mathbb{R})$, and the measure $d\gamma = \frac{dxdy}{y^2} \frac{d\theta}{2\pi}$ for $\gamma = \sm 1 & x \\ & 1 \esm \sm y^{1/2} & \\ & y^{-1/2} \esm \sm \cos(\theta) & \sin(\theta) \\ -\sin(\theta) & \cos(\theta) \esm $. In the end, it is the exponential decay of the $L^2$-norm of an arithmetically normalised Hecke--Maa{\ss} newform in the spectral parameter $t$ that is responsible for the exponential loss in the spectral parameter in the prequel \cite{SupThetaII}.

In this paper, we invert the ``Selberg/Harish-Chandra" transform \eqref{eq:intro-selberg-transform}. For suitable test functions $h(t;\tau)$, the inversion is given by
\begin{equation} \label{eq:intro-inversion-Selberg-transform}
	\Phi_{\infty}(\vect{x}) = 	\frac{1}{4\pi |\det(\vect{x})|} \int_{-\infty}^{\infty} h(t;\det(\vect{x})) \Xi_{\frac{1}{2}+it}\left(  \frac{\|\vect{x}\|_2^2-2|\det(\vect{x})|}{4|\det(\vect{x})|} \right) \tanh(\pi t) t dt,
\end{equation}
when $\det(\vect{x}) \neq 0$. By construction, this function is bi-$\SO{2}(\mathbb{R})$-invariant. In Section \ref{sec:arch-test-func}, we further show decay properties and that it satisfies the required partial differential equation wherever it is smooth. Unfortunately, we cannot guarantee that the function is everywhere smooth. To work around this issue, we prove, via an application of Green's Theorem, that the function is nevertheless in the $L^2$-closure of the space of bi-$\SO{2}(\mathbb{R})$-invariant Schwartz functions who satisfy the required partial differential equation. This arguement closely follows the one given in the prequel \cite[\S 6]{SupThetaI}, where the required test function (an extended Bergman kernel) was not smooth either.

\subsubsection{Long window} For the long eigenvalue window, we chose the test function $\Phi_{\infty}$ that arises from 
$$
h(t;\tau) = \cosh\left( \tfrac{\pi}{2}-\tfrac{1}{T}  \right) \cdot 2 \sqrt{|\tau|} K_{it}(2 \pi |\tau|) 
$$
in the aforementioned construction \eqref{eq:intro-inversion-Selberg-transform}. This particular choice of test function comes with several qualities that we exploit in this paper. The extra exponential factor compared to \eqref{eq:intro-selberg-transform} compensates fully for (the exponential) loss that is introduced via the $L^2$-norm of an arithmetically normalised Hecke--Maa{\ss} newform at least for spectral parameter $t \le T$. Furthermore, the Fourier transform of $h(t;\tau)$ in $t$ is elementary, rapidly decreasing, and the action of the diagonal subgroup of $\SL{2}(\mathbb{R})$ via the Weil representation on $\Phi_{\infty}$ is oscillatory. The former allows us to write $\Phi_{\infty}$ as an Abel transform of an elementary function and the latter is exploited for further cancellation which is crucial in the proof of Theorem \ref{thm:fourth-hybrid-conditional} and its analogue Theorem \ref{thm:Eichler-order-main-thm}.

\subsubsection{Short window} In the prequel \cite{SupThetaII}, a hybrid bound which is convex in the eigenvalue has been derived for an individual form for the analogous problem on definite quaternion algebras. This has been achieved via a non-sharp bound for the fourth moment over a short eigenvalue window. Naturally, the question arises whether the same is possible for indefinite quaternion algebras via analogous means. 
Unfortunatly, we were so far unable to overcome the analytical difficulties that arise in this pursuit. In particular, we were led to analyse the integral
$$
\int_{-\infty}^{\infty} e^{-\frac{1}{2}S^2(r \pm \frac{1}{T}i-\ell)^2} e^{i ( \beta\sinh(\ell) \pm T((r-\ell)) )} d\ell,
$$
to which we were unable to apply Debye's method of steepest descent in sufficient uniformity in the parameters $T$, $S$, $\beta$, and $r$, where $S$ is a bit bigger than $1$, $0 < \beta \ll T$, and $r \in \mathbb{R}$. We further note that traditional stationary phase arguments (such as in \cite[\S8]{BlomerKhanYoung} for a singularity of degree two) run into trouble as the magnitudes of the derivatives of the phase may oscillate with the order of the derivative, thus leading to insufficient bounds.

Perhaps, these analytic difficulties simplify if the conditions on the test function $\Phi_{\infty}$ are further relaxed. For example, such that it must no longer satisfy the partial differential equation \eqref{eq:spherical-PDE}, in which case the theta lift from $\mathop{\rm PB}^{\times}  \times \mathop{\rm PB}^{\times}$ to $\PGL{2}$ is no longer spherical, or if one drops the bi-$\SO{2}(\mathbb{R})$-invariance, in which case one also picks up Maa{\ss} forms of different weights.



\subsection{Overview}

In Section \ref{sec:lattice-counting}, we introduce an intricate counting problem, which when bounded sufficiently strongly (see Conjecture \ref{conj:heart-lattice-counting}) yields sharp bounds for the fourth moments, Theorems \ref{thm:fourth-hybrid-conditional}, \ref{thm:Eichler-order-main-thm}. In the same section, we also give bounds on some lattice counting problems in the split case that were not required in the prequel.

In Section \ref{sec:arch-test-func}, we prove an extension of the allowed test functions for the theta correspondence used in this paper. The latter is briefly recalled in Section \ref{sec:theta}.

In Section \ref{sec:longwindow}, we prove the reduction of the fourth moment to a counting problem and deduce the main theorems.

In the appendix, we collect some bounds on special functions which are required in parts of the paper.

\begin{acknowledgements} I am indebted to Paul Nelson, Ilya Khayutin for endless discussions on this and other topics, and to Peter Sarnak for his continued support and encouragement over the years. I would also like to thank Peter Humphries for valuable conversations.
	
This work began at the Institute for Advanced Study, where I was supported by the National Science Foundation Grant No. DMS – 1638352 and the Giorgio and Elena Petronio Fellowship Fund II, and completed at the Institute for Mathematical Research (FIM) at ETH Z\"urich.
	
\end{acknowledgements}

\section{Notation} \label{sec:notation} We keep the notation in line with the prequel \cite{SupThetaII} whenever possible.

Let $B$ be an indefinite quaternion algebra over $\mathbb{Q}$ with reduced discriminant $d_B$, i.e.\@ the product of all finite primes at which $B$ is non-split. Let $R \subset B$ be an Eichler order of level $N$ with $N$ being square-free and coprime to $d_B$. In other words, an order $R$ such that for every prime $p \div d_B$, $R \otimes_{\mathbb{Z}} \mathbb{Z}_p$ is a maximal order in $B \otimes_{\mathbb{Q}} \mathbb{Q}_p$, and for every prime $p \notdiv d_B$, $R \otimes_{\mathbb{Z}} \mathbb{Z}_p \cong \sm \mathbb{Z}_p & \mathbb{Z}_p \\ N\mathbb{Z}_p & \mathbb{Z}_p \esm$. We fix, once and for all, an identification $B_{\infty} = B \otimes_{\mathbb{Q}} \mathbb{R} \cong \Mat{2}{2}(\mathbb{R})$. Then, the group of units of reduced norm one $\Gamma = R^1 \hookrightarrow B^1 \otimes_{\mathbb{Q}} \mathbb{R} \cong \SL{2}(\mathbb{R})$ gives rise to a Fuchsian group of the first kind acting properly discontinuous on the upper-half plane $\mathbb{H} = \SL{2}(\mathbb{R}) \slash K_{\infty}$, where $K_{\infty}= \SO{2}(\mathbb{R})$. We shall equip $\mathbb{H}$ with the measure $\frac{dxdy}{y^2}$, where $z=x+iy \in \mathbb{H}$, and denote by $V_{d_B,N} = (d_BN)^{1+o(1)}$ the co-volume of $\Gamma$. We equip $\Gamma \backslash \mathbb{H}$ with the corresponding invariant probability measure.

The Laplace--Beltrami operator $\Delta$ acts (by extention) on $L^2(\Gamma \backslash \mathbb{H})$ and we denote by $\mathcal{F}$ an orthonormal basis of Hecke--Maa{\ss} forms of the cuspidal newspace of $L^2(\Gamma \backslash \mathbb{H})$. That is, a function $\varphi \in \mathcal{F}$ satisfies
\begin{itemize}
	\item $-\Delta \varphi = \lambda_{\varphi} \varphi$, $\lambda_{\varphi} = \frac{1}{4}+t_{\varphi}^2$,
	\item $\varphi$ is an eigenfunction of almost all Hecke operators,
	\item $\varphi$ is orthogonal to all $\varphi' \in L^2(\Gamma' \backslash \mathbb{H}) \subset L^{2}(\Gamma \backslash \mathbb{H})$ where $\Gamma'$ comes from a super order $R' \supsetneq R$,
	\item $\varphi$ decays exponentially at every cusp of $\Gamma \backslash \mathbb{H}$.
\end{itemize}

%
%
%

\subsection{The split case} In the case where $B $ is split, we may and shall assume that $B= \Mat{2}{2}(\mathbb{Q})$, $B_{\infty} = \Mat{2}{2}(\mathbb{R})$, and $R= \sm \mathbb{Z} & \mathbb{Z} \\ N \mathbb{Z}  & \mathbb{Z} \esm$, such that $R^1 = \Gamma_0(N)$. We further define a height function on $\Gamma_0(N) \backslash \mathbb{H}$:
$$
H(z) = \max_{\gamma \in A_0(N)} \im(\gamma z),
$$
where $A_0(N)< \GL{2}(\mathbb{Q})^+ $ denotes the subgroup generated by $\Gamma_0(N)$ and all the Atkin--Lehner operators. For $g \in \SL{2}(\mathbb{R})$, we also let $H(g) :=H(gi)$.

\subsection{Partially dualised lattices}

For a divisor $\ell \div d_BN$, we denote by $R(\ell)$ the lattice in $B$ whose local components $R(\ell) \otimes_{\mathbb{Z}} \mathbb{Z}_p$ are given by
\begin{itemize}
	\item the dual lattice $(R \otimes_{\mathbb{Z}} \mathbb{Z}_p)^{\vee}$ if $p \div \ell$, and
	\item $R \otimes_{\mathbb{Z}} \mathbb{Z}_p$ otherwise.
\end{itemize}

\subsection{Conjugation}

For $g \in \SL{2}(\mathbb{R})$, we let
$$
R(\ell;g):= g^{-1} R(\ell)g.
$$

\subsection{Tailored coordinates} 

On $\Mat{2}{2}(\mathbb{R})$ (and thus on $B_{\infty}$ via identification), we introduce the coordinate system
$$
[a,b,c]+d = \begin{pmatrix}
	d+c & b+a \\ b-a & d-c
\end{pmatrix}.
$$
In these coordinates, we have
$$
\tr([a,b,c]+d) = 2d, \quad \det([a,b,c]+d)= a^2-b^2-c^2+d^2.
$$
We further introduce the following notations for $\gamma=[a,b,c]+d\in \Mat{2}{2}(\mathbb{R})$:
\begin{equation} \begin{gathered}
		P(\gamma)= a^2+b^2+c^2+d^2, \quad \tau(\gamma) = \det(\gamma) = a^2-b^2-c^2+d^2,\\
		\diamondsuit(\gamma) = P(\gamma)^2-\tau(\gamma)^2 =4(a^2+d^2)(b^2+c^2).
		\label{eq:P-tau-diam-def}
\end{gathered} \end{equation}

%
%

\subsection{Archimedean regions} For $L >0$ and $\delta \in (0,1]$ we denote by $\Omega(\delta,L)$ the set of all elements $[a,b,c]+d \in \Mat{2}{2}(\mathbb{R})$ for which 
$$
a^2+b^2+c^2+d^2 \le L^2, \quad b^2+c^2 \le \delta L^2.
$$
With $\Omega^{\star}(\delta,L)$ we denote the non-zero elements. Similarly, we denote by $\Psi(\delta,L)$ the set of all elements $[a,b,c]+d \in \Mat{2}{2}(\mathbb{R})$ for which 
$$
a^2+b^2+c^2+d^2 \le L^2, \quad a^2+d^2 \le \delta L^2,
$$
and with $\Psi^{\star}(\delta,L)$ we denote the non-zero elements.

\subsection{Vinogradov's notation} \label{sec:vinogradov-notation}

For two expressions $A,B$, we write $A \ll B$ if there is an absolute constant $C > 0$ such that $|A| \le CB$. If the constant $C$ is not absolute, but depends on a range of parameters, then those parameters will be indicated as a subscript of $\ll$. We also let $A \gg B \Leftrightarrow B \ll A$ and
$
A \asymp B \Leftrightarrow \left(A \ll B \wedge B \ll A \right)
$
with the same convention on parameters as subscripts.

We reserve $\epsilon$ to be a special parameter that is sufficiently (depending on the context) and arbitrarily small but positive quantity that may vary from line to line.

\section{Lattice counting} \label{sec:lattice-counting}

%
%
%
%

\subsection{Asymptotic notation}
For the lattice counting, we introduce the following asymptotic relation:
$$
A \prec B \quad \Leftrightarrow \quad A \ll_{\epsilon} (d_BN(1+L))^{\epsilon} B.
$$

\subsection{Counting propositions and conjectures}
Here, we collect the counting propositions and conjectures required to prove Theorems \ref{thm:hybrid-indiv}-\ref{thm:Eichler-order-main-thm}.

\begin{prop}[{\cite[Lemma 3.6]{SupThetaII}}] \label{prop:Omega-lattice-counting}
Let $g \in \SL{2}(\mathbb{R})$. 
We have
	\begin{multline*}
		\sum_{\substack{\gamma_1,\gamma_2 \in R(\ell;g) \cap \Omega^{\star}(\delta,L) \\ \det(\gamma_1)=\det(\gamma_2)}} 1 \prec \ell L^2 \left(1+\ell^{\frac{1}{2}}H(g) \delta^{\frac{1}{2}}+\ell^{\frac{1}{2}}H(g)\delta L + \frac{\ell^2}{d_BN} \delta L^2 \right) \\
		\times \left(1+\ell^{\frac{1}{2}}H(g) \delta^{\frac{1}{2}}+\ell^{\frac{1}{2}}H(g)\delta^{\frac{1}{2}} L + \frac{\ell^2}{d_BN} \delta L^2 \right).
	\end{multline*}
Terms involving $H(g)$ are to be omitted if $B$ is non-split ($\Leftrightarrow d_B > 1$). Moreover, the sum is empty if $L \ll \min\{\ell^{-\frac{1}{2}},\ell^{-1} H(g)^{-1}\delta^{-\frac{1}{2}}\}$.
\end{prop}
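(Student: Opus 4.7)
The natural plan is to apply Cauchy--Schwarz to $\sum_\tau N(\tau)^2$, where $N(\tau) = |\{\gamma \in R(\ell;g)\cap \Omega^\star(\delta,L) : \det \gamma = \tau\}|$, in the form
$$
\sum_\tau N(\tau)^2 \;\le\; \Big(\max_\tau N(\tau)\Big) \cdot \big| R(\ell;g) \cap \Omega^\star(\delta,L) \big|,
$$
and to estimate both factors by geometry of numbers, organising the $\asymp \ell L^2$ admissible values of $\tau$ (the denominator of $\det$ on $R(\ell;g)$ divides $\ell$ and $|\tau| \le L^2$) into the leading factor $\ell L^2$ of the bound.

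For the total count I would use that $\Omega(\delta, L)$ is a thin slab---a $\sqrt{\delta}L$-tube around the $2$-plane $\{b=c=0\}$ inside a box of sidelength $L$---of $4$-volume $\asymp \delta L^4$, while $R(\ell;g)$ has covolume $\asymp d_BN/\ell^2$, independent of $g$. The Gauss-type main term is then $\ell^2 \delta L^4/(d_BN)$, matching the rightmost summand inside each of the two factors. The remaining summands are boundary contributions governed by the successive minima of $R(\ell;g)$ relative to the splitting $\mathbb{R}^4 = \{b=c=0\}\oplus\{a=d=0\}$: the identity $1 = [0,0,0]+1$ is always in $R(\ell;g)$ and generates the ``$1$'' term; partial dualisation shortens certain vectors and contributes the $\ell^{1/2}H(g)$-type terms; and conjugation by $g$ near a cusp shortens a unipotent direction by $H(g)^{-1}$, which is why the $H(g)$-summands only appear in the split case (in the cocompact case $R(\ell;g)$ has minimum $\gg 1$ uniformly in $g$).

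For the maximum fiber I would fix $\tau$ and use that on $\Omega(\delta,L)$ one has $\det = a^2 + d^2 - (b^2+c^2)$ with $b^2+c^2 \le \delta L^2$; the fiber $\{\det = \tau\}$ therefore confines $(a,d)$ to an annulus of inner/outer radius $\sqrt{|\tau|}$ and width $\asymp \delta L^2/\sqrt{|\tau|+\delta L^2}$, while $(b,c)$ varies over a disk of radius $\sqrt{\delta}L$. Counting lattice points in this $3$-dimensional slice of $\Omega$ in the same reduced basis yields an analogous hybrid bound, but now one has a dichotomy for the annular $(a,d)$-count: estimate by area (producing a factor $\delta L$) or by angular thickness (producing $\delta^{1/2} L$). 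Using one estimate in the total count and the other in the maximum fiber is precisely what produces the asymmetry between factor $1$ (with $\delta L$ in the third summand) and factor $2$ (with $\delta^{1/2} L$). The emptiness criterion $L \ll \min(\ell^{-1/2}, \ell^{-1}H(g)^{-1}\delta^{-1/2})$ then records exactly that $L$ sits below the first Minkowski minimum of $R(\ell;g)$ inside the slab $\{b^2+c^2 \le \delta L^2\}$, which is $\asymp \ell^{-1/2}$ from the arithmetic structure and further shortened by $H(g)^{-1}$ in the split case.

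\paragraph{Main obstacle.} The hard part is the bookkeeping. Each of the four summands inside each factor corresponds to a specific successive-minimum regime of $R(\ell;g)$ cut by $\Omega(\delta,L)$ (trivial contribution, one short minimum inside the slab, two short, and the full-volume regime), and assembling all such regime combinations into the stated product form requires a careful simultaneous reduction of $R(\ell;g)$ that is uniform in $g$ and that treats the split and non-split cases in parallel. Making the $\delta$-versus-$\delta^{1/2}$ asymmetry emerge precisely---rather than symmetrically worse in both factors---hinges on choosing, regime-by-regime, whether to bound the determinant annulus by its area or by its radial thickness, and then pairing these choices correctly between the total-count factor and the maximum-fiber factor.
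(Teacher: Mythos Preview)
This proposition is not proved in the present paper; it is quoted from the prequel \cite[Lemma~3.6]{SupThetaII}. The method of proof is, however, visible from the parallel argument given here for Proposition~\ref{prop:Psi-lattice-counting} in the split case (see \eqref{eq:psi-reduct-to-ternary}), and it differs substantially from your proposal.

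Your route---bound $\sum_\tau N(\tau)^2$ by $(\max_\tau N(\tau))\cdot |R(\ell;g)\cap\Omega^\star(\delta,L)|$ and estimate both factors by geometry of numbers in the four-dimensional lattice---runs into a genuine obstacle at the fibre count. The fibre $\{\det=\tau\}$ is the quadric $a^2-b^2-c^2+d^2=\tau$, not a convex body, so successive-minima arguments do not apply to it directly. Your ``annulus~$\times$~disk'' picture describes the \emph{projections} of the fibre onto the coordinate $2$-planes, not the fibre itself: for each fixed $(b,c)$ in the disk, the pair $(a,d)$ lies on the \emph{circle} $a^2+d^2=\tau+b^2+c^2$, and counting lattice points on circles requires arithmetic input (a divisor bound), which geometry of numbers alone does not supply. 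Without that input your max-fibre estimate has no content, and this is not a bookkeeping issue.

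The prequel's approach injects the needed arithmetic earlier and more cleanly. One writes $\gamma=\tfrac12\tr(\gamma)+\gamma^0$ with $\tr(\gamma)\in\mathbb{Z}$ and $\gamma^0 \in R(\ell;g)^0$ traceless; the constraint $\det\gamma_1=\det\gamma_2$ becomes
\[
\tfrac14\bigl(\tr(\gamma_1)^2-\tr(\gamma_2)^2\bigr)=\det(\gamma_2^0)-\det(\gamma_1^0),
\]
and for fixed $\gamma_1^0,\gamma_2^0$ with $\det\gamma_1^0\neq\det\gamma_2^0$ the divisor bound gives $\prec 1$ solutions in the trace pair. This reduces the second moment to a product of two \emph{three}-dimensional counts: the total traceless count $|R(\ell;g)^0\cap\Omega(\delta,2L)|$, handled by successive minima, and the maximal traceless determinant-fibre count, handled by a separate arithmetic result (\cite[Theorem~2.6]{SupThetaII}). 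The asymmetry between the two bracketed factors in the statement then reflects the difference between these two three-dimensional estimates, not a choice between ``area'' and ``thickness'' bounds on an annulus as you suggest.
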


\begin{prop}[{\cite[Lemma 3.7]{SupThetaII}} in the case of non-split $B$]
	\label{prop:Psi-lattice-counting}
Let $g\in \SL{2}(\mathbb{R})$. We have 
	\begin{equation*}
		\sum_{\substack{\gamma_1,\gamma_2 \in R(\ell;g) \cap \Psi^{\star}(\delta,L) \\ \det(\gamma_1)=\det(\gamma_2)}} 1 \prec \ell L^2 \Biggl(1+ \ell^{\frac{1}{2}}H(g) \delta^{\frac{1}{2}} +  \frac{\ell}{(d_BN)^{\frac{1}{2}}}L
		+\ell H(g)\delta^{\frac{1}{2}} L  + \frac{\ell^2}{d_BN} \delta^{\frac{1}{2}} L^2 \Biggr)^2.
	\end{equation*}
	Terms involving $H(g)$ are to be omitted if $B$ is non-split ($\Leftrightarrow d_B > 1$). Moreover, the sum is empty if $L \ll \min\{\ell^{-\frac{1}{2}},\ell^{-1} H(g)^{-1}\delta^{-\frac{1}{2}}\}$.
\end{prop}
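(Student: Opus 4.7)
The plan is to follow the pair-difference/pair-sum decomposition from \cite[Lemma 3.7]{SupThetaII}, which establishes the bound in the non-split case, and to extend it uniformly so as to track the contribution of parabolic short vectors produced by $g$-conjugation in the split case. Set $\xi = \gamma_1 - \gamma_2$ and $\sigma = \gamma_1 + \gamma_2$; after splitting into parity cosets, both $\xi$ and $\sigma$ range over $R(\ell;g) \cap 2\Psi(\delta, L)$. The determinant-equality $\det(\gamma_1) = \det(\gamma_2)$ is equivalent to the linear condition $B_{\det}(\sigma, \xi) = 0$, where $B_{\det}$ is the bilinear polarisation of the determinant. The pair count is thereby bounded by
$$ \sum_{\xi \in R(\ell;g) \cap 2\Psi(\delta,L)} \; \sum_{\substack{\sigma \in R(\ell;g) \cap 2\Psi(\delta,L) \\ B_{\det}(\sigma,\xi)=0}} 1. $$

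The core input is the lattice-point count $|R(\ell;g) \cap \Psi(\delta, L)|$. The region $\Psi(\delta, L)$ has side-lengths $(\delta^{1/2}L, \delta^{1/2}L, L, L)$ in the coordinates $(a,d,b,c)$, and the covolume of $R(\ell;g)$ is $\asymp \ell^2 d_B N$. A successive-minima analysis produces a count of the shape $1 + (\text{short-vector contributions}) + \frac{\ell^2}{d_B N}\delta^{1/2}L^2$, where each short-vector contribution corresponds to a sublattice of dimension $1,2,3$ fitting into the anisotropic box. In the non-split case, $R(\ell;g)$ is anisotropic for $\det$ and the short-vector contributions depend only on the discriminant, yielding the $\frac{\ell}{(d_BN)^{1/2}}L$ term. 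In the split case, conjugation by $g \in \SL{2}(\mathbb{R})$ shrinks the unipotent sublattice: a vector $n \cdot \sm 0 & 1 \\ 0 & 0 \esm \in R(\ell)$ has $g^{-1}\!\cdot g$-conjugate of Frobenius norm $\asymp |n|/\Im(gi)$, and since $\Im(gi)$ can be as large as $H(g)$, the unipotent direction is compressed by $H(g)$. The parabolic direction lies entirely within the $\delta^{1/2}L$-thick slab of $\Psi$, so the associated short vectors contribute $\prec 1 + \ell^{1/2} H(g) \delta^{1/2} L$, accounting for the two $H(g)$-dependent terms of the proposition.

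For each fixed $\xi$, the slab count of $\sigma$ satisfying $B_{\det}(\sigma,\xi) = 0$ is handled by a standard dyadic argument, which loses a factor of $\|\xi\|^{-1}$ off the bulk count in the direction against which the slab is thin. Summing over $\xi$ and matching the four geometric regimes (trivial; short parabolic; short generic; bulk) produces the squared-bracket right-hand side of the proposition. The emptiness claim for $L \ll \min\{\ell^{-1/2},\ell^{-1}H(g)^{-1}\delta^{-1/2}\}$ then follows from the successive-minima lower bounds: any nonzero generic lattice vector has $\det$-norm $\ge 1/\ell$ (hence Frobenius norm $\gtrsim \ell^{-1/2}$), while any parabolic short vector has Frobenius norm $\gtrsim 1/(\ell H(g))$ when projected onto the $(a,d)$-slab, either of which prevents $R(\ell;g) \cap \Psi^\star(\delta, L)$ from being non-empty in that range.

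The main obstacle is the slab count for $\xi$ on or near the isotropic cone $\det(\xi)=0$, which is precisely the regime that produces the $H(g)$-terms in the split case. There the condition $B_{\det}(\sigma,\xi)=0$ is (nearly) tangent to the determinant quadric, so the naive codimension-one gain degenerates and the slab count can be comparable to the full bulk count. The resolution is to stratify $\sigma$ by its distance to the isotropic cone and exploit the $\Psi$-constraint $a_\sigma^2+d_\sigma^2 \le 4 \delta L^2$, which itself forces $\sigma$ close to isotropic. The interplay yields the $\delta^{1/2}$ (rather than $\delta$) weighting on the $H(g)$-terms and matches the shape of the stated bound.
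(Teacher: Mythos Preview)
Your approach via the sum/difference decomposition $\xi=\gamma_1-\gamma_2$, $\sigma=\gamma_1+\gamma_2$ and the bilinear constraint $B_{\det}(\sigma,\xi)=0$ is genuinely different from the paper's argument, which instead splits each $\gamma_i$ into its trace and traceless parts, $\gamma_i=\tfrac{1}{2}\tr(\gamma_i)+\gamma_i^0$, and uses the equation
\[
\tfrac{1}{4}\tr(\gamma_1)^2+\det(\gamma_1^0)=\tfrac{1}{4}\tr(\gamma_2)^2+\det(\gamma_2^0)
\]
together with a divisor bound to decouple the two matrices. This reduces the problem to (i) counting the \emph{fixed} three-dimensional traceless lattice $R(\ell;g)^0$ inside $\Psi(\delta,2L)$, carried out in a separate proposition via a careful successive-minima comparison of two different explicit upper bounds, and (ii) a fibre count $|R(\ell;g)^0\cap\Omega(1,2L)\cap\det^{-1}(\{n\})|$ imported from \cite[Theorem~2.6]{SupThetaII}. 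The paper also first reduces, via Atkin--Lehner invariance of $R(\ell)$, to the case $\Im(gi)=H(g)$, which is what makes the $H(g)$-terms appear with the correct sign; your sketch does not mention this reduction.

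The main gap in your proposal is the hyperplane count. For each fixed $\xi$ you must count $\sigma$ in the rank-three sublattice $R(\ell;g)\cap\xi^{\perp}$ intersected with $\Psi(\delta,L)$, and this sublattice \emph{varies with $\xi$}. Your assertion that one ``loses a factor of $\|\xi\|^{-1}$ off the bulk count in the direction against which the slab is thin'' is not a standard statement and does not follow from a dyadic argument alone: the hyperplane $\xi^{\perp}$ need not be transverse to the short directions of $\Psi$, and for $\xi$ near the isotropic cone the induced lattice $R(\ell;g)\cap\xi^{\perp}$ can have very skew successive minima relative to the box. Your proposed stratification by distance to the cone is plausible heuristics but is not developed into an actual bound. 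The paper avoids this entirely by passing to the fixed traceless lattice and paying the divisor-bound cost on the one-dimensional trace variable instead; this is what makes the argument go through uniformly in $\xi$ (or rather, without ever introducing $\xi$).
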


\begin{conjecture} \label{conj:heart-lattice-counting} Let $g\in \SL{2}(\mathbb{R})$. We have
	\begin{multline*}
		\sum_{\substack{\gamma_1,\gamma_2 \in R(\ell;g)\\ \gamma_1,\gamma_2 \in \Omega^\star(\delta,L) \cup \Psi^\star(\delta,L) \\ \det(\gamma_1)=\det(\gamma_2) \\ |\diamondsuit(\gamma_1)-\diamondsuit(\gamma_2)| \le  \heartsuit L^4}} 1 \prec \ell L^2 \Biggl(1  +\frac{\ell^2}{d_BN} \delta^{\frac{1}{2}}L^2 + \frac{\ell^4}{(d_BN)^2} \min\{\heartsuit, \delta\} L^4 \\
		+ \ell H(g)^2 + \ell^{\frac{3}{2}} H(g)^2 \delta^{\frac{1}{4}} L+\ell^2 H(g)^2 \delta^{\frac{1}{2}}L^2 \Biggr), 
	\end{multline*}
	where the terms involving $H(g)$ are to be omitted if $B$ is non-split ($\Leftrightarrow d_B > 1$).
\end{conjecture}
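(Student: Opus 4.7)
The plan is to parametrize the pair $(\gamma_1,\gamma_2)$ by its sum and difference, linearise the two defining conditions, and then apply the geometry of numbers to $R(\ell;g)$ with two nearly independent linear constraints.

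Concretely, I would set $\sigma = \gamma_1 + \gamma_2$ and $\eta = \gamma_1 - \gamma_2$, both of which lie in $R(\ell;g)$ after absorbing a factor of $2$ (harmless in the square-free setting). Writing $\langle\cdot,\cdot\rangle_\tau$ and $\langle\cdot,\cdot\rangle_P$ for the symmetric bilinear forms polarising $\tau=\det$ and $P$ respectively, the determinant equality becomes the single linear relation $\langle \eta,\sigma\rangle_\tau = 0$. Since $\diamondsuit = P^2 - \tau^2$ and $\tau(\gamma_1)=\tau(\gamma_2)$, the diamond constraint simplifies to $|(P(\gamma_1)+P(\gamma_2))\langle\eta,\sigma\rangle_P|\le\heartsuit L^4$, that is, a second near-linear constraint $|\langle\eta,\sigma\rangle_P|\ll \heartsuit L^4/P(\sigma)$ on $\eta$ given $\sigma$. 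The region condition $\gamma_1,\gamma_2\in\Omega^\star\cup\Psi^\star$ further confines both $\sigma$ and $\eta$ to (a doubling of) the same union.

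Having fixed $\sigma$, I would then count $\eta\in R(\ell;g)\cap(\Omega^\star\cup\Psi^\star)$ satisfying both linear constraints. In the generic regime where the functionals $\langle\cdot,\sigma\rangle_\tau$ and $\langle\cdot,\sigma\rangle_P$ are essentially independent, a standard geometry-of-numbers count on the rank-four lattice $R(\ell;g)$ --- with successive minima controlled globally by $d_B N$ and $\ell$ and, in the split case, locally near the cusps of $g\Gamma g^{-1}$ by $H(g)$ --- should yield a codimension-two slab estimate. Summing over $\sigma$, which ranges over $\prec\ell L^2+\ell^2L^4/(d_BN)$ lattice points in the allowed region, and tracking the $H(g)$-parabolic boundary via the same mechanism as in the proofs of Propositions \ref{prop:Omega-lattice-counting} and \ref{prop:Psi-lattice-counting}, would then reproduce the conjectured right-hand side after dyadic reorganisation in $P(\sigma)$ and in the $\Omega$/$\Psi$ alternatives.

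The chief obstacle is the degenerate regime where $\sigma$ itself lies deep inside $\Omega^\star\cup\Psi^\star$, meaning one of the coordinate pairs $(a_\sigma,d_\sigma)$ or $(b_\sigma,c_\sigma)$ of $\sigma$ is of size $O(\sqrt{\delta}L)$. In this case $\langle\cdot,\sigma\rangle_\tau$ and $\langle\cdot,\sigma\rangle_P$ become nearly proportional, the two constraints on $\eta$ collapse to essentially one, and a naive count recovers only a $\min\{\heartsuit,\delta\}^{1/2}$-saving instead of the full $\min\{\heartsuit,\delta\}$. Closing this gap appears to require either a finer stratification of $\sigma$ combined with an averaging input, for example a Siegel-type second-moment identity for the pencil $\alpha\tau+\beta P$ of quadratic forms on $R(\ell;g)$, or direct bounds on the number of quaternion lattice points simultaneously representing two integers in short intervals. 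Both routes appear genuinely deep and are, I suspect, the reason the statement is recorded as a conjecture rather than a theorem.
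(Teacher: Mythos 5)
The first thing to say is that there is no proof in the paper to compare against: Conjecture \ref{conj:heart-lattice-counting} is stated as a conjecture precisely because the author could not prove it. What the paper offers is only supporting evidence --- a volume heuristic (which in fact lands a factor $\delta$ \emph{below} the conjectured volume term) and a verification for the upper-triangular matrices in the split case --- together with the remark that all attempted separations of the condition $|\diamondsuit(\gamma_1)-\diamondsuit(\gamma_2)|\le\heartsuit L^4$ ``have introduced losses that could not be compensated for.'' So your proposal cannot be measured against an official argument; it has to stand on its own, and by your own account it does not.

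Your algebraic reductions are correct and worth recording: with $\sigma=\gamma_1+\gamma_2$, $\eta=\gamma_1-\gamma_2$ the determinant condition polarises to $\langle\eta,\sigma\rangle_\tau=0$, and since $\tau(\gamma_1)=\tau(\gamma_2)$ the diamond condition becomes $(P(\gamma_1)+P(\gamma_2))\,|\langle\eta,\sigma\rangle_P|\le\heartsuit L^4$, i.e.\@ $|\langle\eta,\sigma\rangle_P|\ll\heartsuit L^4/P(\sigma)$. The genuine gap is the ``degenerate regime'' you flag at the end, and you understate how bad it is: it is not an exceptional stratum but essentially the \emph{whole} count. In the coordinates $[a,b,c]+d$ one has $\langle\eta,\sigma\rangle_P-\langle\eta,\sigma\rangle_\tau=2(b_\eta b_\sigma+c_\eta c_\sigma)$, and if both $\gamma_i$ lie in $\Omega^\star(\delta,L)$ then automatically $b_\sigma^2+c_\sigma^2\le 4\delta L^2$ (similarly with $(a,d)$ for $\Psi^\star$), so for the dominant diagonal contributions the two functionals $\langle\cdot,\sigma\rangle_\tau$ and $\langle\cdot,\sigma\rangle_P$ are \emph{always} nearly proportional and your ``generic regime where the functionals are essentially independent'' is empty. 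What survives of the two constraints is the single condition $|b_\eta b_\sigma+c_\eta c_\sigma|\ll\min\{\heartsuit,\delta\}L^2$ on a vector $(b_\eta,c_\eta)$ confined to a disc of radius $\sqrt{\delta}L$; extracting the full density saving $\min\{\heartsuit,\delta\}/\delta$ from this uniformly in $\sigma$ --- in particular when $b_\sigma^2+c_\sigma^2$ is much smaller than $\delta L^2$, so the strip is wide, and without losing control of the sparse strata of the rank-four lattice near the cusps --- is exactly the open problem. The ``Siegel-type second-moment identity for the pencil $\alpha\tau+\beta P$'' you invoke to close this is named but not supplied, so the proposal is a plausible reduction of the conjecture to another unproved statement, not a proof. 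If you pursue this, the upper-triangular computation in the paper (which never uses the diamond condition at all and still closes) and the $\delta$-discrepancy between the volume heuristic and the conjectured bound are the two data points worth calibrating any refinement against.
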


The two propositions were proven (at least in part) in the prequel \cite[Lemmas 3.6 \& 3.7]{SupThetaII}. The part that is missing is the statement of Proposition \ref{prop:Psi-lattice-counting} for $B$ being the matrix algebra. We give a proof of that case in Section \ref{sec:psi-counting-proof}. Jointly, these two estimates suffice to prove the unconditional Theorems \ref{thm:hybrid-indiv},\ref{thm:fourth-hybrid unconditional} and the unconditional part of Theorem \ref{thm:Eichler-order-main-thm}. They are, however, insufficient to prove the sharp upper bound for the fourth moment. A more stringent counting of pairs of matrices is required. To this avail, we've formulated the smallest upper bound required (by the method of this paper) as Conjecture \ref{conj:heart-lattice-counting}.

The main difficulty in the new restriction $|\diamondsuit(\gamma_1)-\diamondsuit(\gamma_2)| \le \heartsuit L^4$ lies in the fact that it involves both $\gamma_1$ and $\gamma_2$. In the prequels of this work, the strategy of separation was to split the matrices $\gamma_1, \gamma_2$ into their trace and traceless parts and apply a divisor bound to the trace part. This proved highly effective for the condition $\det(\gamma_1) = \det(\gamma_2)$ as it came with little to no cost when $\ell = 1$ and the cost when $\ell > 1$ could be mitigated by weighting these counts less by a suitable choice of Siegel domains. This new condition, however, resists such and other separations insofar as all our attempts have introduced losses that could not be compensated for.

\subsection{Proof of Proposition \ref{prop:Psi-lattice-counting}} \label{sec:psi-counting-proof} For the proof, we require a couple of lemmas.

\begin{lem}[{\cite[Lemma 1]{HT2}}\footnote{The reference gives the slightly weaker bound obtained by omitting the factor $(c,N)$, but the stronger bound that we have stated follows from their proof, keeping track of $(c,N)$ at each step rather than bounding it from below by $1$.}] \label{lem:spacing}
	Let $z\in \mathbb{H}$ with maximal imaginary part under the orbit of the Atkin--Lehner operators $A_0(N)$ of $\Gamma_0(N)$ with $N$ squarefree. Then, we have
	$$
	\Im(z) \ge \frac{\sqrt{3}}{2N} \quad \text{ and } \quad |cz+d|^2 \ge \frac{(c,N)}{N}
	$$
	for any $(c,d) \in \mathbb{Z}^2$ distinct from $(0,0)$.
\end{lem}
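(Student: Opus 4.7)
The overall strategy is to exploit the maximality of $\Im(z)$ under $A_0(N)$ in its most direct form. For any element $\sigma \in A_0(N)$ with integer matrix representative of determinant $q>0$ and bottom row $(C,D)$, the transformation formula $\Im(\sigma z) = q\Im(z)/|Cz+D|^2$ combined with maximality yields $|Cz+D|^2 \ge q$. Both parts of the lemma will be deduced by judicious choices of such $\sigma$, taken to be Atkin--Lehner representatives adapted to the pair $(c,d)$.

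I would prove the second inequality first. Reducing to the primitive case $(c,d)=1$ is routine: if $(c,d) = g(c',d')$ with $(c',d')=1$, then $|cz+d|^2 = g^2|c'z+d'|^2$, while squarefreeness of $N$ yields $(gc',N) \le g(c',N)$, so the primitive bound rescales cleanly into the general one. For primitive $(c,d)$, set $c_0:=(c,N)$ and $c_1:=N/c_0$, which are coprime by squarefreeness of $N$, and write $c=c_0 c''$. The plan is to exhibit an integer matrix
$$
M := \begin{pmatrix} c_1 a & b \\ Nc'' & c_1 d \end{pmatrix}
$$
of determinant $c_1$ representing the Atkin--Lehner involution $W_{c_1}\in A_0(N)$, whose bottom row $(Nc'',c_1 d)=c_1\cdot(c,d)$ is proportional to the target. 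Such integers $a,b$ exist by B\'ezout, because the four pairwise gcds $(c_1,c_0)$, $(c_1,c'')$, $(d,c_0)$, $(d,c'')$ are all equal to $1$ (the first from squarefreeness, the others from $(c,d)=1$ and $(c,N)=c_0$), which together force $(c_1 d,\,c_0 c'')=1$. The maximality inequality then reads $c_1^2|cz+d|^2 \ge c_1$, i.e.\ $|cz+d|^2 \ge 1/c_1 = c_0/N = (c,N)/N$, as desired.

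For the first inequality I would deduce it as a corollary of the second. Applying the just-proved bound to the pair $(c,d)=(-N,k)$ for each $k\in\mathbb{Z}$ yields $|Nz-k|^2 \ge (N,k) \ge 1$, equivalently, $z$ lies outside the open disk of radius $1/N$ centered at $k/N$ for every integer $k$. Two adjacent such disks (centers $0$ and $1/N$) intersect on the line $x=1/(2N)$ at height $y=\sqrt{3}/(2N)$, so the scalloped upper envelope of the union of all these disks sits exactly at height $\sqrt{3}/(2N)$. Therefore $\Im(z)\ge \sqrt{3}/(2N)$.

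The only non-trivial step is the explicit construction of the Atkin--Lehner representative $M$ with prescribed bottom row proportional to $(c,d)$; here the squarefreeness of $N$ is the crucial input, as it is what decouples $c_0$ and $c_1=N/c_0$ into coprime factors so that the B\'ezout argument succeeds. Beyond this bookkeeping no further analytic or geometric obstacle arises.
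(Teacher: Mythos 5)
Your proof is correct, and it is essentially the argument of the cited reference: the paper itself gives no proof of Lemma \ref{lem:spacing} but defers to \cite[Lemma 1]{HT2}, whose proof likewise constructs an Atkin--Lehner representative of determinant $N/(c,N)$ with bottom row proportional to $(c,d)$ and invokes maximality of $\Im(z)$ (the footnote's point being exactly the refinement from $1/N$ to $(c,N)/N$ that your choice of $W_{c_1}$ delivers). Your reduction to primitive $(c,d)$, the coprimality bookkeeping making the B\'ezout step work, and the deduction of $\Im(z)\ge\sqrt{3}/(2N)$ from the disks $|z-k/N|\ge 1/N$ are all sound.
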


%
%

\begin{lem}[{\cite[Lemma 6.3]{SupThetaII}}] \label{lem:latticepointcount}
	Let $\mathcal{K}\subseteq \mathbb{R}^n$ be a closed convex $0$-symmetric set of positive volume. Let $\Lambda\subset \mathbb{R}^n$ be a lattice and let $ \lambda_1 \le \lambda_2 \le \dots \le \lambda_n$ denote the successive minima of $\mathcal{K}$ on $\Lambda$. Then
		$$
		|\mathcal{K} \cap \Lambda| \asymp_n \prod_{i=1}^n \left( 1+\frac{1}{\lambda_i} \right).
		$$
	\end{lem}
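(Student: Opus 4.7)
The plan is to fix linearly independent lattice vectors $v_1,\dots,v_n \in \Lambda$ with $v_i\in\lambda_i\mathcal{K}$, whose existence is the defining property of the successive minima, and prove the upper and lower bounds separately.

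For the lower bound I would produce many lattice points in $\mathcal{K}$ by exploiting convexity and $0$-symmetry. Since $\pm v_i/\lambda_i\in\mathcal{K}$ and (by convexity) $0\in\mathcal{K}$, for any reals $\alpha_1,\dots,\alpha_n$ with $\sum_i|\alpha_i|\le 1$ the point $\sum_i\alpha_i(v_i/\lambda_i)$ is a convex combination of $0$ and $\pm v_i/\lambda_i$, hence lies in $\mathcal{K}$. Specialising to $\alpha_i=n_i\lambda_i$ with $n_i\in\mathbb{Z}$ satisfying $|n_i|\le\lfloor 1/(n\lambda_i)\rfloor$ forces $\sum_i|\alpha_i|\le 1$, so $\sum_i n_i v_i\in\mathcal{K}\cap\Lambda$. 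Linear independence of the $v_i$ makes these lattice points pairwise distinct, yielding
$$|\mathcal{K}\cap\Lambda|\ge\prod_{i=1}^n\bigl(2\lfloor 1/(n\lambda_i)\rfloor+1\bigr)\gg_n\prod_{i=1}^n\bigl(1+1/\lambda_i\bigr).$$

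For the upper bound I would induct on $n$, with $n=1$ an elementary interval count. In the induction step, choose the primitive vector $v_1$ of $\mathcal{K}$-norm $\lambda_1$, pass to the quotient $W=\mathbb{R}^n/\mathbb{R} v_1$ via the projection $\pi$, and study the rank-$(n-1)$ lattice $\overline\Lambda=\pi(\Lambda)$ in $W$ together with the $0$-symmetric convex body $\overline{\mathcal{K}}=\pi(\mathcal{K})$. A careful lift argument, choosing representatives in $\Lambda$ that stay within a constant multiple of $\mathcal{K}$, shows that the successive minima $\overline\lambda_1,\dots,\overline\lambda_{n-1}$ of $(\overline{\mathcal{K}},\overline\Lambda)$ satisfy $\overline\lambda_i\asymp_n\lambda_{i+1}$. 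Each fibre of $\mathcal{K}\cap\Lambda$ over a point of $\overline{\mathcal{K}}\cap\overline\Lambda$ is the intersection of $\mathcal{K}$ with an affine line parallel to $v_1$, and contributes at most $\ll_n 1+1/\lambda_1$ lattice points by the $n=1$ case. Multiplying with the inductive hypothesis for $(\overline{\mathcal{K}},\overline\Lambda)$ yields the product bound.

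The main obstacle is controlling the lifts from $\overline\Lambda$ to $\Lambda$ without losing more than a constant factor in the $\mathcal{K}$-norm: the naive lift of a short vector in $\overline{\mathcal{K}}$ need not be short in $\mathcal{K}$, and recovering a good lift is essentially the content of the Mahler/Minkowski basis selection lemma. Once that lemma is accepted, the rest of the upper bound is bookkeeping; the lower bound, by contrast, is a one-shot convexity computation.
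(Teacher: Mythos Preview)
The paper does not prove this lemma; it is quoted verbatim from the prequel \cite[Lemma 6.3]{SupThetaII} and used as a black box. Your sketch is a correct and standard proof of this classical estimate: the lower bound via convex combinations of the $\pm v_i/\lambda_i$ is clean, and the inductive upper bound via projection along $v_1$ is the usual route, with the comparison $\overline{\lambda}_i \asymp_n \lambda_{i+1}$ (Mahler's lemma) correctly flagged as the one nontrivial ingredient. Nothing to add.
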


	\begin{lem}[{\cite[Lemma 2.1]{HT3}}] \label{lem:latticeball}
		Let $\Lambda \subset \mathbb{R}^2$ be a lattice of rank $2$ and $B\subseteq \mathbb{R}^2$ a ball of radius $R$ (not necessarily centred at $0$). If $\lambda_1 \le \lambda_2$ are the successive minima of $\Lambda$, then
		$$
		|B \cap \Lambda| \ll 1+\frac{R}{\lambda_1}+\frac{R^2}{\lambda_1\lambda_2}.
		$$	
	\end{lem}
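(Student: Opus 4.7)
\textbf{Proof sketch of Lemma \ref{lem:latticeball}.} The plan is a standard translation-plus-Minkowski-basis argument. If $B\cap \Lambda = \emptyset$ the bound $|B\cap \Lambda|\le 1$ holds trivially, so fix some $p_0\in B\cap \Lambda$. For every other $p\in B\cap \Lambda$, the difference $p-p_0$ lies in $\Lambda$ and has length $\le 2R$; hence
$$
|B\cap \Lambda| \le |\Lambda \cap B_0(2R)|,
$$
where $B_0(r)$ denotes the ball of radius $r$ centred at the origin. It therefore suffices to bound the right-hand side independently of the centre of $B$.

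Next, I would invoke Minkowski's theorem on successive minima in rank $2$: there exists a basis $v_1,v_2$ of $\Lambda$ with $|v_1|=\lambda_1$ and $|v_2|\asymp \lambda_2$, and furthermore the angle $\theta$ between $v_1$ and $v_2$ satisfies $|\sin\theta|\gg 1$ (a Minkowski-reduced basis). In particular the orthogonal projection of $v_2$ onto the line perpendicular to $v_1$ has length $\asymp \lambda_2$, while the projection onto the line spanned by $v_1$ has length $\ll \lambda_2$.

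Write an arbitrary element of $\Lambda$ as $av_1+bv_2$ with $a,b\in\mathbb{Z}$, and decompose $av_1+bv_2$ along $v_1$ and along $v_1^{\perp}$. The component along $v_1^{\perp}$ equals $b$ times the projection of $v_2$ onto $v_1^{\perp}$, hence has length $\asymp |b|\lambda_2$. For $av_1+bv_2$ to lie in $B_0(2R)$ we therefore need $|b|\ll R/\lambda_2$, giving at most $O(1+R/\lambda_2)$ values of $b$. For each such $b$, the condition that the $v_1$-component has length $\le 2R$ confines $a$ to an interval of length $\ll R/\lambda_1$, giving at most $O(1+R/\lambda_1)$ choices of $a$. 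Multiplying,
$$
|\Lambda \cap B_0(2R)| \ll \Bigl(1+\tfrac{R}{\lambda_1}\Bigr)\Bigl(1+\tfrac{R}{\lambda_2}\Bigr) \ll 1 + \tfrac{R}{\lambda_1} + \tfrac{R}{\lambda_2} + \tfrac{R^2}{\lambda_1\lambda_2},
$$
and since $\lambda_1\le \lambda_2$ the middle terms collapse to $R/\lambda_1$, yielding the claimed bound.

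No step is genuinely hard here; the only subtle point is using a properly Minkowski-reduced basis to guarantee that the angle between $v_1$ and $v_2$ is bounded away from $0$ and $\pi$, which is what makes the projection-length argument lossless and forces the appearance of the product $\lambda_1\lambda_2$ (rather than $\lambda_1^2$) in the denominator of the leading term.
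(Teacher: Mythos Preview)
Your argument is correct and is the standard proof of this elementary lattice-point bound. Note that the paper does not actually prove this lemma; it merely quotes it from \cite[Lemma~2.1]{HT3}, so there is no ``paper's own proof'' to compare against. Your translation-to-origin step followed by counting in a Minkowski-reduced basis is exactly the argument one finds in the cited reference.
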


\begin{prop} \label{prop:psi-count-trace-free-lattice} Let $g \in \SL{2}(\mathbb{R})$, $B=\Mat{2}{2}(\mathbb{Q})$ the matrix algebra, and $R = \sm \mathbb{Z} & \mathbb{Z} \\ N \mathbb{Z} & \mathbb{Z} \esm$. The first successive minima of $R(\ell;g)^{0}$ with respect to $\Psi(\delta,1)$ is $\gg \min\{\ell^{-\frac{1}{2}}, \ell^{-1}H(g)^{-1}\delta^{-\frac{1}{2}}\}$ and
	\begin{equation*}
	\left| R(\ell;g)^{0} \cap \Psi(\delta,L) \right|  \prec 1+\left( \ell^{\frac{1}{2}}+\ell H(g) \delta^{\frac{1}{2}} \right)L+\left(\frac{\ell^{\frac{3}{2}}}{N^{\frac{1}{2}}}+\ell^{\frac{3}{2}} y \delta^{\frac{1}{2}}\right)L^2+ \frac{\ell^2}{N} \delta^{\frac{1}{2}}L^3.
	\end{equation*}
\end{prop}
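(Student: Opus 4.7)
The plan is to parameterise $R(\ell;g)^0$ explicitly by $\mathbb{Z}^3$ in a basis adapted to the conjugation by $g$, and then count directly via three essentially independent one-dimensional box bounds on the integer parameters. The first successive minimum bound splits into a non-nilpotent case and two nilpotent subcases; the main count reduces to verifying that a simple three-fold product expansion is term-by-term dominated by the claimed bound.

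First I would use the $\SO{2}(\mathbb{R})$-invariance of both $P(\gamma)$ and $a(\gamma)^2+d(\gamma)^2$ (conjugation by $\SO{2}(\mathbb{R})$ rotates the $(b,c)$-plane while fixing the $(a,d)$-axes) together with the invariance of $H(g)$ and of the lattice $R(\ell;g)$ under left multiplication of $g$ by elements of $A_0(N)$ (which normalise $R(\ell)$ for square-free $N$), to assume $g = \sm y^{1/2} & xy^{-1/2}\\ 0 & y^{-1/2}\esm$ with $y = H(g)$ and $|x|\leq 1/2$. Using the $\mathbb{Z}$-basis $e_1 = \mathrm{diag}(1,-1)$, $e_2 = \ell^{-1}e_{12}$, $e_3 = (N/\ell)e_{21}$ of $R(\ell)^0$, a direct computation expresses the tailored coordinates of $\gamma = g^{-1}(Ae_1+Be_2+Ce_3)g$ as
\begin{equation*}
a = \tfrac{1}{2\ell y}\bigl(B + 2A\ell x - CN(x^2+y^2)\bigr),\quad b = a + \tfrac{CNy}{\ell},\quad c = A - \tfrac{CNx}{\ell}.
\end{equation*}

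For the first successive minimum, I would argue by cases on $\gamma_0 = Ae_1+Be_2+Ce_3$. If $\det(\gamma_0)\neq 0$, the square-freeness of $N$ places $\det(\gamma_0) = -A^2 - BCN/\ell^2$ in $(1/\ell)\mathbb{Z}\setminus\{0\}$, so $|\det(\gamma_0)|\geq 1/\ell$; combined with $|\det(\gamma)|\leq P(\gamma)\leq L^2$, this gives $L\gg \ell^{-1/2}$. If $\gamma_0$ is nilpotent with $C = 0$, then $A = 0$ and $B\neq 0$, so $a = B/(2\ell y)$ with $|B|\geq 1$, and $a^2\leq \delta L^2$ forces $L\gg (\ell H(g)\delta^{1/2})^{-1}$. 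If $\gamma_0$ is nilpotent with $C\neq 0$, the relation $\beta\gamma' = -\alpha^2$ simplifies the formula for $a$ to $a = -(c^2 + (CNy/\ell)^2)/(2CNy/\ell)$, giving $|a|\geq |CNy/\ell|/2\geq NH(g)/(2\ell)$, which yields the even stronger bound $L\gg NH(g)/(\ell\delta^{1/2})$. Combining these subcases yields the claimed first successive minimum bound.

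For the main count, I would bound the parameters successively. From $b - a = CNy/\ell$ and $|a|,|b|\leq L$ the number of admissible $C$'s is $\ll 1 + L\ell/(NH(g))$; then, from $|c|\leq L$ and $c = A - CNx/\ell$, the number of $A$'s for each $C$ is $\ll 1 + L$; finally, from $|a|\leq \delta^{1/2}L$ with $a$ linear in $B$ of slope $1/(2\ell y)$, the number of $B$'s for each $(A,C)$ is $\ll 1 + \ell H(g)\delta^{1/2}L$. Multiplying gives
\begin{equation*}
|R(\ell;g)^0 \cap \Psi(\delta, L)| \ll (1 + L\ell/(NH(g)))(1 + L)(1 + \ell H(g)\delta^{1/2}L),
\end{equation*}
and the remaining task is to verify that each term of the product expansion is dominated by the corresponding term in the claimed bound using only $\ell\leq N$, $H(g)\geq 1$, and $\delta\leq 1$ (e.g.\ $1\ll \ell^{1/2}$; $\ell/(NH(g))\ll \ell^{1/2}$ and $\ll \ell^{3/2}/N^{1/2}$; $\ell^2\delta^{1/2}/N\ll \ell^{3/2}/N^{1/2}$). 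The main obstacle I anticipate is this final majorisation step, but it requires no analytic input beyond elementary size comparisons.
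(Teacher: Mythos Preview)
Your parameterisation and the explicit tailored coordinates are correct, but the argument has a genuine gap at the final majorisation step, and a smaller one in the successive-minimum bound.

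The main issue is the hypothesis $H(g)\ge 1$ that you invoke for the term-by-term comparison. This is false: by Lemma~\ref{lem:spacing} one only has $H(g)\ge \sqrt{3}/(2N)$, and $H(g)$ can genuinely be of size $\asymp 1/N$. With $y=H(g)$ small, your product
\[
\bigl(1+\tfrac{\ell L}{Ny}\bigr)(1+L)\bigl(1+\ell y\delta^{1/2}L\bigr)
\]
contains the term $\ell L^2/(Ny)$, which for $y\asymp 1/N$ is $\asymp \ell L^2$ and is \emph{not} dominated by the target coefficient $\ell^{3/2}N^{-1/2}+\ell^{3/2}y\delta^{1/2}$ unless $\ell\asymp N$. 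A single box bound cannot give the stated $L^2$-coefficient; this is precisely why the paper proceeds differently. The paper first derives two distinct box bounds: one by counting the pair $(\mathbf a,\mathbf c)$ jointly via the spacing lemma (Lemma~\ref{lem:spacing}) and then $\mathbf b$, and a second by the Harcos--Templier inequality $|\mathbf c z^2-2\mathbf a z-\mathbf b|\ll yL$ which is good in the complementary range of $y$. Neither bound alone suffices. The key step is to feed both into the successive-minima identity (Lemma~\ref{lem:latticepointcount}), evaluate at well-chosen values of $L$ to extract upper bounds for $1/(\lambda_1\lambda_2\lambda_3)$ and $1/(\lambda_1\lambda_2)$, and take the \emph{minimum} of the two resulting estimates for $1/(\lambda_1\lambda_2)$. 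That min is what produces the $\ell^{3/2}N^{-1/2}$ term. Your single product bypasses this entirely and therefore cannot reach the claimed bound.

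A secondary gap: in the nilpotent $C\neq 0$ subcase of the first-minimum argument you obtain $L\gg NH(g)/(\ell\delta^{1/2})$, but this is \emph{not} an ``even stronger bound''. For $H(g)\asymp 1/N$ it gives only $L\gg (\ell\delta^{1/2})^{-1}\cdot N^{-1}\cdot N = 1/(\ell\delta^{1/2})\cdot 1$\dots no, more carefully, $NH(g)/(\ell\delta^{1/2})\asymp 1/(\ell\delta^{1/2})$ is fine against $(\ell H(g)\delta^{1/2})^{-1}$ only if $NH(g)^2\gg 1$, which fails when $H(g)\asymp 1/N$. The paper instead treats the whole case $(A,C)\neq(0,0)$ at once via the spacing inequality $|\mathbf c z-\mathbf a|\ge \ell^{-1/2}$ from Lemma~\ref{lem:spacing}, which immediately gives $L\gg \ell^{-1/2}$ there; your determinant argument covers only the non-nilpotent part of this.
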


\begin{proof} Since $R(\ell)$ is invariant under the Atkin--Lehner operators $A_0(N)$, we may assume that $g i = z = x+iy$ with $y$ maximal under the orbit of $A_0(N)$ such that $H(g)=y$. Furthermore, since $\Psi^\star(\delta,L)$ and $\det(\cdot)$ are bi-$K_{\infty}$-invariant, we may also suppose that $g$ is upper-triangular, i.e.\@ $g= \sm 1 & x \\  & 1 \esm \sm y^{1/2} & \\ & y^{-1/2} \esm $. Let $\gamma = \sm a & b \\ c & -a \esm \in R(\ell)^0$.
	Thus, $a \in \mathbb{Z}$, $b \in \frac{1}{\ell}\mathbb{Z}$, and $c \in \frac{N}{\ell} \mathbb{Z}$. From
	$$
	g^{-1} \gamma g = \begin{pmatrix} a-cx  &  \tfrac{1}{y}(2ax+b-cx^2)  \\  cy & cx-a 	\end{pmatrix} \in \Psi(\delta,L),
	$$
	it follows that
	\begin{equation}
		\left| a-cx \right|  \ll L, \quad |cy| \ll L, \label{eq:psi-a}
		\end{equation}
		\begin{equation}
		\left| 2ax+b-c(x^2+y^2) \right|  \ll y\delta^{\frac{1}{2}}L, \label{eq:psi-mixed}
		\end{equation}
Note that \eqref{eq:psi-a} implies
	\begin{equation}
		\left|cz-a\right| \ll L.  
	\end{equation}
	Suppose $\gamma \neq 0$. Then, either $(a,c) \neq (0,0)$ and $L \gg |cz-a| \ge \ell^{-\frac{1}{2}}$ by Lemma \ref{lem:spacing} or $a=c=0$ and $y\delta^{\frac{1}{2}}L \gg |b| > \ell^{-1}$. Hence, the first successive minima $\lambda_1$ of $R(\ell;g)^0$ with respect to $\Psi(\delta,1)$ satisfies
	\begin{equation} \label{eq:psi-succ-est-first}
	\lambda_1 \gg \min\{\ell^{-\frac{1}{2}}, \ell^{-1}y^{-1}\delta^{-\frac{1}{2}}\}.
	\end{equation}
	Furthermore, we may upper-bound
	\begin{equation} \label{eq:psi-upper-bnd}
	\left| R(\ell;g)^{0} \cap \Psi(\delta,L) \right| \ll \left( 1+ \ell^{\frac{1}{2}}L + \frac{\ell}{Ny} L^2 \right) \left(1+\ell y\delta^{\frac{1}{2}}L \right),
	\end{equation}
	where the first factor is an upper bound for the number of values of $(a,c)$ by Lemmas \ref{lem:spacing}, \ref{lem:latticeball} and the second factor is an upper bound for the number of values of $b$ once $a$ and $c$ are fixed which is derived from \eqref{eq:psi-mixed}.
	
	Alternatively, we see that \eqref{eq:psi-a} and \eqref{eq:psi-mixed} lead to
	\begin{equation}
		2y\left|cx-a\right| \ll yL, \quad \left| c(x^2-y^2) -2ax-b \right| \ll yL
	\end{equation}
	and further
	\begin{equation} \label{eq:Omega-HT-ineq}
		\left| c z^2 -2az -b \right| \ll yL.
	\end{equation}
	We may now bound the number of values of $c$ through \eqref{eq:psi-a} and subsequently, once $c$ is fixed, the number of values of $(a,b)$ by means of Lemmas \ref{lem:spacing}, \ref{lem:latticeball} and \eqref{eq:Omega-HT-ineq}. This yields
	\begin{equation} \label{eq:psi-upper-bnd-delta=1}
		\left| R(\ell;g)^{0} \cap \Psi(\delta,L) \right| \ll \left( 1+\frac{\ell L}{Ny} \right) \left(1+ \ell^{\frac{1}{2}} N^{\frac{1}{2}} y L + \ell y L^2\right).
	\end{equation}
	We note that this last argument is precisely the one given in \cite{HT3}, which may be seen by increasing $\delta$ to $1$ and noting $\Psi(1,L)= \Omega(1,L)$, cf.\@ \cite[\S 9]{SupThetaII}.

	By Lemma \ref{lem:latticepointcount}, we know that
	\begin{equation} \label{eq:psi-exact-count}
		\left| R(\ell;g)^{0} \cap \Psi(\delta,L) \right| = \left| R(\ell;g)^{0} \cap L\Psi(\delta,1) \right| \asymp 1+\frac{L}{\lambda_1}+\frac{L^2}{\lambda_1\lambda_2} + \frac{L^3}{\lambda_1\lambda_2\lambda_3},
	\end{equation}
	where $\lambda_1 \le \lambda_2 \le \lambda_3$ are the successive minima of $R(\ell;g)^0$ with respect to $\Psi(\delta,1)$. By comparing \eqref{eq:psi-upper-bnd} with \eqref{eq:psi-exact-count}, we find
	\begin{equation} \label{eq:psi-succ-est}
	\frac{1}{\lambda_1\lambda_2\lambda_3} \ll \frac{\ell^2 \delta^{\frac{1}{2}}}{N}, \quad \text{and } \quad \frac{1}{\lambda_1\lambda_2} \ll \frac{\ell^{\frac{3}{2}}}{N^{\frac{1}{2}}} \delta^{\frac{1}{4}}+\frac{\ell}{Ny} + \ell^{\frac{3}{2}}y \delta^{\frac{1}{2}}
	\end{equation}
	from taking $L \to \infty$, respectively $L= \ell^{-\frac{1}{2}} N^{\frac{1}{2}}\delta^{-\frac{1}{4}}$. Similarly, we may compare \eqref{eq:psi-upper-bnd-delta=1} with \eqref{eq:psi-exact-count} to arrive at
	\begin{equation}\label{eq:psi-succ-est2}
	\frac{1}{\lambda_1\lambda_2} \ll \frac{\ell^{\frac{3}{2}}}{N^{\frac{1}{2}}}+\ell y
	\end{equation}
	by taking $L= \ell^{-\frac{1}{2}}N^{\frac{1}{2}}$ and using $y \gg N$ which follows from Lemma \ref{lem:spacing}. By combining \eqref{eq:psi-succ-est} and \eqref{eq:psi-succ-est2}, we find
	\begin{equation} \label{eq:psi-succ-est-2comb}
		\frac{1}{\lambda_1\lambda_2} \ll \frac{\ell^{\frac{3}{2}}}{N^{\frac{1}{2}}}+ \min\left\{ \ell y, \frac{\ell}{N y} \right\} + \ell^{\frac{3}{2}}y \delta^{\frac{1}{2}} \ll \frac{\ell^{\frac{3}{2}}}{N^{\frac{1}{2}}} + \ell^{\frac{3}{2}}y \delta^{\frac{1}{2}}.  
	\end{equation}
	We conclude the Lemma from \eqref{eq:psi-exact-count} and the estimates on the successive minima \eqref{eq:psi-succ-est-first}, \eqref{eq:psi-succ-est}, and \eqref{eq:psi-succ-est-2comb}.
\end{proof}

\begin{proof}[Proof of Proposition \ref{prop:Psi-lattice-counting}, split case]
	The argument follows the one given in the prequels \cite[\S9]{SupThetaI}, \cite[\S3.3 \& \S3.4]{SupThetaII}. Each matrix $\gamma \in R(\ell;g)$ is split into a multiple of the identity and a traceless matrix $\gamma^0$:
	$$
	R(\ell; g) \ni \gamma = \tfrac{1}{2} \tr(\gamma)+\gamma^0 \in \tfrac{1}{2}\mathbb{Z} \oplus \tfrac{1}{2}R(\ell;g)^0.
	$$
	The equality of determinants
	$$
	\tfrac{1}{4}\tr(\gamma_1)^2 + \det(\gamma_1^0) = \det(\gamma_1) = \det(\gamma_2) = \tfrac{1}{4} \tr(\gamma_2)^2+ \det(\gamma_2^0)
	$$
	and a divisor bound on the traces is then used to derive
	\begin{multline} \label{eq:psi-reduct-to-ternary}
		\sum_{\substack{\gamma_1,\gamma_2 \in R(\ell;g) \cap \Psi^{\star}(\delta,L) \\ \det(\gamma_1) = \det(\gamma_2) }} 1 \prec \left| R(\ell; g)^0 \cap \Psi(\delta, 2L) \right| \\
		\times \left( \left| R(\ell; g)^0 \cap \Psi(\delta, 2L) \right| + \delta^{\frac{1}{2}} L  \max_{\substack{n \in \frac{1}{n} \mathbb{Z} \\ |n| \le 4 L^2 }} \left| R(\ell; g)^0 \cap \Omega(1, 2L) \cap \det{}^{-1}(\{n\})  \right| \right),
	\end{multline}
	see \cite[Eqs.\@ (3.10) \& (3.11)]{SupThetaII} for the detailled argument. Now, the first successive minima of $R(\ell;g)$ with respect to $\Psi(\delta,1)$ is at least the minimum out of $\delta^{-\frac{1}{2}}$ and $K := \min\{\ell^{-\frac{1}{2}}, \ell^{-1} H(g)^{-1} \delta^{-\frac{1}{2}}\}$ by Proposition \ref{prop:psi-count-trace-free-lattice}. Note $\delta^{-\frac{1}{2}} \ge 1 \ge \ell^{-\frac{1}{2}} \ge K$. Thus, the set over which is summed is empty if $L$ is smaller than that as zero is excluded in $\Psi^\star(\delta, L)$. Hence, the sum is $0$ in that case and the statement holds trivially. For $L \gg K$, we have by Proposition \ref{prop:psi-count-trace-free-lattice} that
	\begin{equation} \label{eq:psi-tern-L-large}
		\left| R(\ell; g)^0 \cap \Psi(\delta, 2L) \right| \\ \prec \ell^{\frac{1}{2}} L \left( 1+ \ell^{\frac{1}{2}}H(g) \delta^{\frac{1}{2}} + \frac{\ell}{N^{\frac{1}{2}}}L + \ell H(g) \delta^{\frac{1}{2}} L+\frac{\ell^{2}}{N} \delta^{\frac{1}{2}} L^2 \right)
	\end{equation}
	and by \cite[Theorem 2.6]{SupThetaII} that
	\begin{equation} \label{eq:psi-typeII-L-large}
		\max_{\frac{1}{\ell}\mathbb{Z}} \left| R(\ell; g)^0 \cap \Omega(1, 2L) \cap \det{}^{-1}(\{n\})  \right| \prec \ell^{\frac{1}{2}} \delta^{-\frac{1}{2}} \left( 1+ \frac{\ell^2}{N} \delta^{\frac{1}{2}} L^2  \right).
	\end{equation}
	The split case of Proposition \ref{prop:Psi-lattice-counting} now follows from combining \eqref{eq:psi-reduct-to-ternary}, \eqref{eq:psi-tern-L-large}, and \eqref{eq:psi-typeII-L-large}.
\end{proof}

\subsection{A geometric view on the conjecture}
To give the reader some intuition for this new condition, we give some geometric interpretation of it. For $z,w \in \mathbb{H}$, let
$$
u(z,w) = \frac{|z-w|^2}{4 \Im z \Im w},
$$
which relates to the hyperbolic distance through $d(z,w) = \arccosh(1+2u(z,w))$. Then, for small $\delta$
$$\begin{aligned}
\begin{Bmatrix}
	\gamma_1, \gamma_2 \in R(1;g) \\
	\gamma_1, \gamma_2 \in \Omega(\delta/4,2L) \backslash \Omega(\delta,L)  \\
	 \det(\gamma_1) = \det(\gamma_2) \\
	|\diamondsuit(\gamma_1)-\diamondsuit(\gamma_2)| \le \heartsuit (2L)^4
\end{Bmatrix} 
&\overset{z=gi}{\leftrightsquigarrow}
\begin{Bmatrix}
	\gamma_1,\gamma_2 \in R \\
	u(z, \gamma_1 z), u(z, \gamma_2 z) \asymp \delta \\
	\det(\gamma_1) = \det(\gamma_2) \asymp L \\
	|u(z, \gamma_1 z) - u(z, \gamma_2 z)| \ll \heartsuit
\end{Bmatrix}, \\
\begin{Bmatrix}
	\gamma_1, \gamma_2 \in R(1;g) \\
	\gamma_1, \gamma_2 \in \Psi(\delta/4,2L) \backslash \Psi(\delta,L)  \\
	\det(\gamma_1) = \det(\gamma_2) \\
	|\diamondsuit(\gamma_1)-\diamondsuit(\gamma_2)| \le \heartsuit (2L)^4
\end{Bmatrix} 
&\overset{z=gi}{\leftrightsquigarrow}
\begin{Bmatrix}
	\gamma_1,\gamma_2 \in R \\
	u(-\overline{z}, \gamma_1 z), u(-\overline{z}, \gamma_2 z) \asymp \delta \\
	\det(\gamma_1) =  \det(\gamma_2) \asymp L \\
	|u(-\overline{z}, \gamma_1 z) - u(-\overline{z}, \gamma_2 z)| \ll \heartsuit
\end{Bmatrix}.
\end{aligned}$$
These should be understood as imprecise correspondences that could be made more precise through additional technicalities. When $\gamma_1, \gamma_2 \in \Omega(\delta/4,2l) \backslash \Omega(\delta,L)$, we see that the new condition corresponds to the condition that the two matrices $\gamma_1, \gamma_2$ move the point $z=gi$ approximately the same amount. When the sets $\Psi$ are considered instead, the condition is is similar, though the distance is measured from $-\overline{z}$ which has to do with the fact that this concerns negative determinants which one should think of as involving the reflection operator.

\subsection{Further motivation of the conjecture}
We provide the following evidence towards this conjecture. First, we give a heuristic for the volume term, i.e.\@ the largest scale, and second, we provide an upper bound for the upper-triangular matrices when $B$ is the matrix algebra.

\subsubsection{Volume heuristic}
For the heuristic of the volume term, we have:
\begin{itemize}
	\item the areas of $\Omega(\delta,L)$ and $\Psi(\delta,L)$ are both $\asymp \delta L^4$;
	\item the co-volume of $R(\ell;g)$ is $\asymp d_B N / \ell^2$;
	\item the relative size of the equation $\det(\gamma_1)=\det(\gamma_2)$ is $L^2 / \ell$ as $|\det(\gamma_i)| \le L^2$ and $\det(\gamma_i) \in \frac{1}{\ell} \mathbb{Z}$ for $i=1,2$; and
	\item the density of the inequality $|\diamondsuit(\gamma_1)-\diamondsuit(\gamma_2)| \le \heartsuit L^4$ is $\asymp \min\{\heartsuit, \delta\} / \delta$ as $\diamondsuit(\gamma_1)$ and $\diamondsuit(\gamma_2)$ are typically of size $\delta L^4$.
\end{itemize}
Hence,
\begin{equation}
	\left(\delta L^4\right)^2 \times \left(\frac{d_BN}{\ell^2}\right)^{-2} \times \left(\frac{L^2}{\ell}\right)^{-1} \times \frac{\min\{\heartsuit, \delta \}}{\delta} = \frac{\ell^5}{(d_BN)^2} \min\{\heartsuit,\delta\} \delta L^6,
\end{equation}
which is smaller than the volume term of Conjecture \ref{conj:heart-lattice-counting} by a factor of $\delta$.

\subsubsection{Upper-triangular matrices} In the case, where $B$ is the matrix algebra and $R = \sm \mathbb{Z} & \mathbb{Z} \\ N \mathbb{Z} & \mathbb{Z} \esm$, we shall upper-bound the contribution of the upper-triangular matrices to Conjecture \ref{conj:heart-lattice-counting}.



Since $R(\ell)$ is invariant under the Atkin--Lehner operators $A_0(N)$, we may assume that $g i = z = x+iy$ with $y$ maximal under the orbit of $A_0(N)$ such that $H(g)=y$. Since $\Omega^\star(\delta,L)$, $\Psi^\star(\delta,L)$, $\det(\cdot)$, and $\diamondsuit(\cdot)$ are all bi-$K_{\infty}$-invariant, we may also suppose that $g$ is upper-triangular, i.e.\@ $g= \sm 1 & x \\  & 1 \esm \sm y^{1/2} & \\ & y^{-1/2} \esm $.
	Let
	$$
	\gamma_i = [b_i,b_i,c_i] + d_i = \begin{pmatrix} d_i+c_i & 2b_i \\  & d_i-c_i	\end{pmatrix} \in R(\ell), \quad i=1,2,
	$$
	be upper-triangular. Thus, $c_i, d_i \in \frac{1}{2}\mathbb{Z}$ and $b_i \in \frac{1}{2\ell} \mathbb{Z}$ for $i=1,2$. By assumption,
	\begin{equation} \label{eq:counting-domain}
		g^{-1} \gamma_i g = \begin{pmatrix} d_i+c_i & \frac{2(c_ix+b_i)}{y} \\ & d_i-c_i  \end{pmatrix} \in \Omega^\star(\delta,L) \cup \Psi^\star(\delta,L), \quad i=1,2,
	\end{equation}
	\begin{equation} \label{eq:counting-det}
		d_1^2-c_1^2 = d_2^2-c_2^2,
	\end{equation}
	\begin{multline} \label{eq:counting-diamond}
		\Biggl |  \left( \left( \frac{c_1x+b_1}{y}  \right)^2  +  d_1^2 \right)\left(\left( \frac{c_1x+b_1}{y} \right)^2 + c_1^2 \right) \\ -
		\left( \left( \frac{c_2x+b_2}{y}  \right)^2  +  d_2^2 \right)\left(\left( \frac{c_2x+b_2}{y} \right)^2 + c_2^2 \right)    
		    \Biggr| \le \tfrac{1}{4} \heartsuit L^4, \quad i=1,2.
	\end{multline}
	Though, we won't make use of the last inequality. An application of Cauchy--Schwarz allows us to assume either both $\gamma_1,\gamma_2 \in g\Omega^\star(\delta,L)g^{-1}$ or $g\Psi^\star(\delta,L)g^{-1}$. We treat these cases separately.
	
	\paragraph{Case: $\gamma_1,  \gamma_2 \in g\Omega^\star(\delta,L)g^{-1}$} Here, \eqref{eq:counting-domain} implies:
	$$
	\left| c_ix+b_i  \right| \le y\delta^{\frac{1}{2}}L, \quad |c_i| \le \delta^{\frac{1}{2}}L, \quad |d_i| \le L, \quad i=1,2.
	$$
	Let $L \ll \min\{\ell^{-\frac{1}{2}}, \ell^{-1} y^{-1} \delta^{-\frac{1}{2}}\}$. This forces $c_1=d_1=0$ which, in turn, also forces $b_1=0$. Hence $\gamma_1 = 0$, which is excluded from the count. Thus, we shall assume $L \gg \min\{\ell^{-\frac{1}{2}}, \ell^{-1} y^{-1} \delta^{-\frac{1}{2}} \}$ as otherwise the set to be counted is empty. We must differentiate whether $c_1 = \pm c_2$ or not. If $c_1 = \pm c_2$, then we have at most $O(\delta^{\frac{1}{2}}L)$ choices for $c_1$ and at most two for $c_2$. Furter, we have at most $O(1+ \ell y\delta^{\frac{1}{2}}L)$ choices for $b_1, b_2$, respectively, and $O(1+L)$ choices for $d_1$, whereas $d_2 = \pm d_1$ by \eqref{eq:counting-det}. This leads to a contribution of
	$$
	\ll (1+\delta^{\frac{1}{2}}L) (1+L) \left(1+\ell y \delta^{\frac{1}{2}} L\right)^2 \ll \ell L^2\left(1+\ell H(g)^2 \delta + \ell H(g)^2 \delta L+\ell H(g)^2 \delta^{\frac{3}{2}}L^2\right).
	$$
	If $c_1 \neq \pm c_2$, then there are $O(1+\delta^{\frac{1}{2}}L)$ choices for $c_1, c_2$, respectively. By \eqref{eq:counting-det} and the divisor bound, there are at most $\prec 1$ choices for $d_1, d_2$. Again, there are at most $O(1+\ell y \delta^{\frac{1}{2}}L)$ choices for $b_1, b_2$, respectively. This leads to a contribution of
	$$
	\prec (1+\delta^{\frac{1}{2}}L)^2 \left(1+\ell y \delta^{\frac{1}{2}} L\right)^2 \prec \ell L^2\left(1+\ell H(g)^2 \delta + \ell H(g)^2 \delta L+\ell H(g)^2 \delta^{\frac{3}{2}}L^2\right),
	$$
	which is smaller than what is claimed in Conjecture \ref{conj:heart-lattice-counting}.
	
	\paragraph{Case: $\gamma_1, \gamma_2 \in g \Psi^\star(\delta,L)g^{-1}$} Here, \eqref{eq:counting-domain} implies:
	$$
	\left| c_ix+b_i  \right| \le y\delta^{\frac{1}{2}}L, \quad |c_i| \le L, \quad |d_i| \le \delta^{\frac{1}{2}} L, \quad i=1,2.
	$$
	We see that the same argument as in the previous case may be applied upon reversing the role of $c_i$ and $d_i$ for $i=1,2$.

%



\section{Archimedean test functions}
\label{sec:arch-test-func}

\subsection{The subspace $V_{m,\omega}$ and $\Omega_{\infty}$}
\label{sec:weil-rep}

Let $\rho$ denote the Weil representation of $\SL{2}(\mathbb{R})$ on $L^2(B_{\infty})$ defined with respect to the additive character $\psi_{\infty}(\xi)= e(\xi)$. In \cite[\S 3]{SupThetaI}, a certain invariant subspace $\Omega_{\infty} \subset L^2(B_{\infty})$ was considered. We shall briefly recollect some necessary facts which may be found in the mentioned reference.

$L^2(B_{\infty})$ splits into the completed orthogonal direct sum $\bigobot_{m \in \mathbb{Z}} V_{m,\omega}$, where $V_{m,\omega}$ is the $(\rho(D_{\omega}^{-1}K_{\infty}D_{\omega},e^{im\varphi})$-isotypical part, where $D_{\omega}= {\rm diag}(\sqrt{2\pi/\omega}, \sqrt{\omega/2\pi})$, i.e.\@ for $\Phi \in V_{m,\omega}$ and $k_{\varphi} = \sm \cos(\varphi) & \sin(\varphi) \\ -\sin(\varphi) & \cos( \varphi) \esm \in K_{\infty} $, we have $\rho(D_{\omega}^{-1}k_{\varphi}D_{\omega})\Phi( \gamma) = e^{im\varphi} \Phi(\gamma)$. A dense subset of $V_{m,\omega}$ is given by the Schwartz functions $\Phi$ satisfying the quantum harmonic oscillator partial differential equation
\begin{equation}
	-\Delta \Phi(\gamma)+\omega^2 \det(\gamma) \Phi(\gamma) = \omega m \Phi(\gamma),
	\label{eq:isotypical-PDE}
\end{equation}
where the Laplace operator is defined as having Fourier multiplier $-4\pi^2 \det(\cdot)$. The subspace $\Omega_{\infty}$ is now defined as the finite linear span
\begin{multline*}
{\rm Span}_{\mathbb{C}} \Bigl\{ \psi_{\infty}(\sigma \det(\gamma))\Phi(\gamma) \Big \vert \sigma \in \mathbb{R}, \exists \omega >0: \Phi \in V_{0,\omega}\\ \text{ and } \exists \delta >0: |\Phi(\gamma)| \ll (1+P(\gamma))^{-2-\delta}    \Bigr\}.
\end{multline*}

\subsection{A family of test functions}

In this section, we construct bi-$K_{\infty}$-invariant archi\-me\-dean test functions $\Phi_{\infty}$ on $B_{\infty}$ in the subspace $\Omega_{\infty} \cap V_{0,2\pi} $ with a prescribed Selberg/Harish-Chandra transform of a fairly general class. We shall now describe what we precisely mean by the latter.

Denote with $\Xi_s$ the spherical function for the principal series on $\PGL{2}(\mathbb{R})$. For $g \in \PGL{2}(\mathbb{R})$, the spherical function may be explicitly expressed as
\begin{equation}\begin{aligned}
\Xi_s(g) 
&= P_{-s}(2u(g)+1) \\
&= {}_2F_1(s,1-s;1;-u(g)),
\label{eq:matrixcoef}\end{aligned}
\end{equation}
where $P_{-s}(x)$ denotes the associated Legendre function and for $\gamma \in \GL{2}(\mathbb{R})$
\begin{equation} \label{eq:u-def-first}
u(\gamma) = \frac{1}{2} \frac{P(\gamma)-|\tau(\gamma)|}{|\tau(\gamma)|},
\end{equation}
where we recall the definition of $P(\gamma)$ and $\tau(\gamma)$ from \eqref{eq:P-tau-diam-def}. We shall abuse some notation and also write $\Xi_s(u)=P_{-s}(2u+1)$, so that $\Xi_s(g)=\Xi_s(u(g))$. By the nature of the hypergeometric function ${}_2F_1$, the function $\Xi_s(u)$ satisfies the differential equation
\begin{equation}
	u(u+1) \Xi_s''(u)+(2u+1)\Xi_s'(u)+s(1-s)\Xi_s(u) = 0.
\label{eq:Xi-diff-eq}
\end{equation}

A bi-$K_{\infty}$-invariant function $\Phi_{\infty}$ may be expressed solely in terms of $P$ and $\tau$ or $u$ and $\tau$, though in the latter one must be careful about the locus $\tau=0$. For $\tau(\gamma) \neq 0$, we shall write
$$
\Phi_{\infty}(\gamma) = \frac{1}{|\tau(\gamma)|} k(u(\gamma); \tau(\gamma)).
$$
The Selberg/Harish-Chandra of $k$ (respectively $\Phi_{\infty}$) on the locus $\tau \neq 0$ is given by
\begin{equation}
	h(t;\tau) = 4 \pi \int_0^{\infty} k(u;\tau) \Xi_{\frac{1}{2}+it}(u) du.
	\label{eq:selberg}
\end{equation}
The remainder of this section is devoted to proving the following theorem.

\begin{thm} \label{thm:Existence-test-function} Suppose $h(t;\tau)$ is of the shape
	\begin{equation}
		h(t;\tau) = h(t) \cosh(\alpha t) \cdot 2\sqrt{|\tau|}K_{it}(2 \pi |\tau|),
		\label{eq:h-shape}
	\end{equation}
	for $\tau \neq 0$ and some $0 \le \alpha < \frac{\pi}{2}$, where $h$ is either 
	\begin{enumerate}
		\item a (sum of) function(s) of the type $\cos( \beta t)$ with $\beta \in \mathbb{R}$, or \label{enum:thm3-type1}
		\item satisfies both \label{enum:thm3-type2}
	\begin{enumerate}
		\item $h(t)$ is entire and even, and \label{enum:thm3-a}
		\item for any $A> 0$: $|h(t)| \ll_{A} (1+|t|)^{-1-\delta}$ for some $\delta >0$ in the strip $|\Im(t)| \le A$. \label{enum:thm3-b}
	\end{enumerate}
\end{enumerate}
	Then, the function
	\begin{equation}
		\Phi_{\infty}(\gamma) = \frac{1}{|\tau(\gamma)|} k(u(\gamma); \tau(\gamma)),
		\label{eq:M-k-relation}
	\end{equation}
where $k(u;\tau)$ is given by
	\begin{equation}
	k(u;\tau) = \frac{1}{4 \pi } \int_{-\infty}^{\infty} h(t;\tau) \Xi_{\frac{1}{2}+it}(u) \tanh(\pi t) t dt,
		\label{eq:inverse-selberg}
	\end{equation}
	for $\tau(\gamma) \neq 0$, extends to a continuous bi-$K_{\infty}$-invariant function $\Phi_{\infty}(\gamma) \in V_{0,2\pi} \subset \Omega_{\infty}$ with Selberg/Harish-Chandra transform $h(t;\tau)$.
\end{thm}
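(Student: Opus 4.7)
The plan is to verify in four stages that: (i) the integral \eqref{eq:inverse-selberg} converges absolutely away from $\tau=0$; (ii) on this locus $\Phi_\infty$ satisfies the PDE \eqref{eq:isotypical-PDE} with $m=0,\omega=2\pi$; (iii) $\Phi_\infty$ extends continuously across $\tau=0$; and (iv) $\Phi_\infty$ lies in $V_{0,2\pi}\subset \Omega_\infty$, which must be done via an $L^2$-density argument because $\Phi_\infty$ need not be smooth everywhere. The claimed Selberg/Harish-Chandra transform $h(t;\tau)$ is then recovered from the Mehler--Fock inversion formula relative to the measure $\tanh(\pi t)t\,dt$.

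For (i), I would combine three standard facts: on the real line, $2\sqrt{|\tau|}K_{it}(2\pi|\tau|)$ decays like $e^{-\pi|t|/2}(1+|t|)^{-1/2}$ at fixed $\tau\neq 0$; $\Xi_{\frac{1}{2}+it}(u)$ grows only polynomially in $t$ at fixed $u$; and $\tanh(\pi t) t$ is at most linear. In case (1), the weight $h(t)\cosh(\alpha t)$ is $O(e^{\alpha|t|})$ with $\alpha<\tfrac{\pi}{2}$, which is swallowed by the Bessel decay; in case (2) the rapid polynomial decay of $h$ in horizontal strips suffices immediately. For (ii), observe that in the bi-$K_\infty$-invariant radial coordinates $(u,\tau)$ the operator $-\Delta+(2\pi)^2\tau$ is diagonalised by $\Xi_{\frac{1}{2}+it}(u)\cdot \sqrt{|\tau|}K_{it}(2\pi|\tau|)/|\tau|$: indeed $\Xi_{\frac{1}{2}+it}(u)$ solves \eqref{eq:Xi-diff-eq}, $\sqrt{|\tau|}K_{it}(2\pi|\tau|)$ satisfies a modified Bessel ODE in $\tau$, and the two eigenvalue contributions cancel so that the product solves \eqref{eq:isotypical-PDE}. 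Integrating in $t$ preserves the PDE on the smooth locus, and inverting \eqref{eq:inverse-selberg} by orthogonality of $\Xi_{\frac{1}{2}+it}$ under the Mehler--Fock measure recovers the prescribed Selberg transform.

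The main obstacle is (iii)-(iv). For continuity across $\tau=0$, I would use a contour shift: the small-argument expansion
$$2\sqrt{|\tau|}K_{it}(2\pi|\tau|)\sim \Gamma(it)(\pi|\tau|)^{\frac{1}{2}-it}+\Gamma(-it)(\pi|\tau|)^{\frac{1}{2}+it}$$
provides only $|\tau|^{1/2}$, so the prefactor $1/|\tau|$ in \eqref{eq:M-k-relation} naively leaves a $|\tau|^{-1/2}$ singularity. However, shifting the $t$-contour to $\Im t=\pm c$ for suitable $c>0$, which is legitimate thanks to the entirety of $h$, its decay in horizontal strips, and the decay of $K_{it}$ off the real axis, converts the integrand into one bounded by $|\tau|^{1/2+c}$, yielding a genuine continuous extension across $\tau=0$. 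The required decay $|\Phi_\infty(\gamma)|\ll(1+P(\gamma))^{-2-\delta}$ for membership in $\Omega_\infty$ follows from the large-argument exponential decay of $K_{it}(2\pi|\tau|)$ combined with the large-$u$ asymptotics of $\Xi_{\frac{1}{2}+it}$ collected in the appendix. Finally, to conclude $\Phi_\infty\in V_{0,2\pi}$ despite possible non-smoothness, I will adapt the strategy of \cite[\S 6]{SupThetaI}: produce a family of bi-$K_\infty$-invariant Schwartz mollifications $\Phi_{\infty,\eta}\in V_{0,2\pi}$ converging to $\Phi_\infty$ in $L^2$, and use Green's theorem on a shrinking neighbourhood of the potentially singular locus to check that the boundary contributions vanish, placing the $L^2$-limit inside the closed subspace $V_{0,2\pi}$ and hence in $\Omega_\infty$.
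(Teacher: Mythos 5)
Your overall skeleton (absolute convergence, PDE on the smooth locus via the ODEs for $\Xi_{\frac12+it}$ and $K_{it}$, and a Green's-identity/density argument to place the not-everywhere-smooth $\Phi_\infty$ in $V_{0,2\pi}$) matches the paper, and step (iv) is essentially Lemma \ref{lem:PDE}: one pairs $\Phi_\infty$ against Schwartz eigenfunctions $\Phi'\in V_{m,2\pi}$, $m\neq0$, applies Green's second identity on the exhausting domains $V_R$, and shows the boundary integrals over $E_R$ and $F_R$ vanish, so that $\Phi_\infty\in\bigcap_{m\neq0}V_{m,2\pi}^{\perp}=V_{0,2\pi}$. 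However, your step (iii) contains a genuine gap. First, as $\tau\to0$ with $P(\gamma)$ fixed, the argument $u=\tfrac12(P-|\tau|)/|\tau|$ of $\Xi_{\frac12+it}$ tends to infinity, and the Legendre asymptotics contribute their own factors $u^{\pm it}\asymp |\tau|^{\mp it}$; these partially cancel the $|\tau|^{\pm it}$ from the small-argument expansion of $K_{it}(2\pi|\tau|)$, leaving two terms of exponent $|\tau|^{0}$ that do \emph{not} improve under any contour shift (consistent with the fact that $\lim_{\tau\to0}\Phi_\infty$ is generically nonzero) and two oscillatory terms $|\tau|^{\mp2it}$ of which a single shift improves one and worsens the other. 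Your claimed bound ``integrand $\ll|\tau|^{1/2+c}$'' is therefore unobtainable, and even if it held it would only give $\Phi_\infty\ll|\tau|^{c-1/2}$, not a continuous extension. Second, and more seriously for membership in $\Omega_\infty$: in the regime $|\tau|\ll1\ll P$ the Bessel factor has no exponential decay and $\Xi_{\frac12+it}(u)\ll u^{-1/2}\log u$ only yields $\Phi_\infty\ll P^{-1/2+\epsilon}$ by the triangle inequality, far short of the required $(1+P)^{-2-\delta}$. No pointwise bound on the integrand can close this; the decay near the light cone must come from cancellation in $t$.

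The paper resolves both issues at once by never working with the $(u,\tau)$ integral near the degenerate loci: it computes the inverse Selberg/Harish-Chandra transform explicitly as an inverse Fourier transform followed by an inverse Abel transform, using the closed-form Fourier pair \eqref{eq:K-Bessel-FT} for $\cosh(\alpha t)K_{it}(2\pi|\tau|)$. This expresses $\Phi_\infty(\gamma)=-\tfrac{\sqrt2}{\pi}\int_0^\infty v^{-1/2}\,dQ(v+P(\gamma);\tau(\gamma))$ with $Q(P;\tau)$ given by the explicit convolution \eqref{eq:Q-P-tau-convolution} against $g=\widehat{h}$, which is jointly continuous in $(P,\tau)$ for $P\ge|\tau|$ (so continuity across $\tau=0$ is automatic), and the rapid decay $Q,\Phi_\infty\ll_A(1+P)^{-A}$ of Lemma \ref{lem:Q-upper-bounds} and Corollary \ref{cor:M-infty-bounds} is extracted from the super-exponential decay \eqref{eq:g-decay} of $g$ — i.e.\@ from the analyticity of $h$ in horizontal strips (hypothesis \emph{(\ref{enum:thm3-b})}), not from the asymptotics of $K_{it}$ or $\Xi_{\frac12+it}$. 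This explicit representation is also what furnishes the derivative bounds needed to control the boundary terms in your step (iv), so without it that step is not self-contained either. To repair your proof you would need to either reproduce this Fourier/Abel computation or carry out a genuinely uniform four-term asymptotic analysis of the product $K_{it}(2\pi|\tau|)\Xi_{\frac12+it}(u)$ together with a contour shift exploiting the analyticity of $h$; the former is what the paper does.
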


\begin{rem}
	It seems likely that, with some additional care, Theorem \ref{thm:Existence-test-function} may be extended to include the case $\alpha= \frac{\pi}{2}$, though further restrictions on the function $h(t)$ may be necessary.
\end{rem}

Since $h( \cdot ; \tau)$ satisfies the classical inversion criteria (see for example \cite[Pages 31,32]{IwSpecMeth}), the formula \eqref{eq:inverse-selberg} holds pointwise for $\tau$. Likewise, the bi-$K_{\infty}$-invariance of $\Phi_{\infty}$ is immediately clear. The main difficulty is showing that $\Phi_{\infty} \in V_{0,2\pi} \subset \Omega_{\infty}$.


\subsection{A Selberg/Harish-Chandra/Mehler--Fock inversion problem}

Let $h(t;\tau)$,\\ $k(u;\tau)$, and $\Phi_{\infty}(\gamma)$ be as in Theorem \ref{thm:Existence-test-function}. We assume the parameter $\alpha$ to be fixed and thus we allow any implied constant to depend on $\alpha$ without further mention. The goal of this section is to establish the continuity of $\Phi_{\infty}$ as well as certain decay properties.

The inverse Selberg/Harish-Chandra transform may be computed in three steps: an inverse Fourier transform, a change of variables, and an inverse Abel transform. Explicitly, we have
$$\begin{aligned}
	g(r;\tau) &= \frac{1}{2\pi} \int_{-\infty}^{\infty} h(t;\tau) e^{irt} dt,\\
	q(v;\tau) &= \tfrac{1}{2} \, g(\arccosh(2v+1);\tau), \\
	k(u;\tau) &= -\frac{1}{\pi} \int_0^{\infty} \frac{1}{\sqrt{v}} dq(v+u;\tau).
\end{aligned}$$
The inverse Fourier transform of $\cosh(\alpha t) \cdot 2 \sqrt{|\tau|} K_{it}(2\pi |\tau|)$ is given by (see \cite[Eqs.\@ 2.16.48.9 \& 2.16.48.20]{Prudnikov})
\begin{equation}
	\frac{\sqrt{|\tau|}}{\pi} \int_{- \infty}^{\infty} \cosh(\alpha t)  K_{it}(2\pi |\tau|) e^{i r t} dt \\
	= \sqrt{|\tau|} e^{-2 \pi |\tau| \cos(\alpha) \cosh( r)} \cos\left( 2 \pi |\tau| \sin(\alpha) \sinh( r) \right).
	\label{eq:K-Bessel-FT}
\end{equation}
We assume from now on that $h$ is of the second type \emph{(\ref{enum:thm3-type2})} in Theorem \ref{thm:Existence-test-function}. We omit treating the case where $h$ is of the first type \emph{(\ref{enum:thm3-type1})}, as in this case $h$ is the Fourier transform of a finite sum of point masses and we trust the reader to be able to make suitable adjustments of the subsequent arguments.

Let $g(r)=\frac{1}{2\pi} \int_{-\infty}^{\infty} h(t) e^{irt}dt$ denote the inverse Fourier transform of $h(t)$. It is an even function and satisfies 
\begin{equation}
	g(r) = \frac{1}{2\pi} \int_{-\infty}^{\infty} 
	h(t) e^{irt} dt = \frac{1}{2\pi} \int_{-\infty}^{\infty} 
	h(t+\sign(r)iA) e^{irt}e^{-\sign(r)Ar} dt \ll_{A} e^{-A|r|}, \label{eq:g-decay}
\end{equation}
where we have used a contour shift.
We may now express $g(r;\tau)$ as the convolution 
\begin{equation}
	g(r;\tau) = \sqrt{|\tau|} \int_{-\infty}^{\infty} g(\ell)  e^{-2 \pi |\tau| \cos(\alpha) \cosh(r- \ell)} \cos\left( 2 \pi |\tau| \sin(\alpha) \sinh(r- \ell) \right)  d\ell.
	\label{eq:g-r-tau-convolution}
\end{equation}
Hence,
\begin{multline}
	q(v;\tau) = \frac{\sqrt{|\tau|}}{2} \int_{-\infty}^{\infty} g(\ell)  e^{-2 \pi |\tau| \cos(\alpha) [(2v+1)\cosh(\ell)-2\sqrt{v(v+1)}\sinh(\ell)]} \\ \cos\left( 2 \pi |\tau| \sin(\alpha) \left[2\sqrt{v(v+1)}\cosh(\ell)-(2v+1)\sinh(\ell)\right] \right)  d\ell.
	\label{eq:q-v-tau-convolution}
\end{multline}
Let
\begin{multline}
	Q(P;\tau) = \frac{1}{2} \int_{-\infty}^{\infty} g(\ell)  e^{-2 \pi \cos(\alpha) [P\cosh(\ell)-\sqrt{P^2-\tau^2}\sinh(\ell)]} \\ \cos\left( 2 \pi  \sin(\alpha) \left[\sqrt{P^2-\tau^2}\cosh(\ell)-P\sinh(\ell)\right] \right)  d\ell,
	\label{eq:Q-P-tau-convolution}
\end{multline}
for $P \ge |\tau|$, such that
$$
q(v;\tau) = \sqrt{|\tau|} Q(|\tau|(2v+1);\tau).
$$
Then, we may write
$$\begin{aligned}
	k(u,\tau) &= - \frac{1}{\pi} \int_0^{\infty} \frac{1}{\sqrt{v}} dq(v+u;\tau) \\ &= -\frac{\sqrt{2}|\tau|}{\pi} \int_0^{\infty} \frac{1}{\sqrt{2v|\tau|}} dQ(|\tau| (2v+2u+1);\tau) \\
	&= - \frac{\sqrt{2}|\tau|}{\pi} \int_0^{\infty} \frac{1}{\sqrt{v}} dQ(v+(2u+1)|\tau|; \tau),
\end{aligned}$$
where integration takes place in the variable $v$. Hence,
\begin{equation}
	\Phi_{\infty}(\gamma) 
	= -  \frac{\sqrt{2}}{\pi} \int_0^{\infty} \frac{1}{\sqrt{v}} dQ(v+P(\gamma);\tau(\gamma)).
	\label{eq:M-def-2}
\end{equation}

\begin{lem} \label{lem:Q-upper-bounds} We have for any $A \ge 0$
	$$\begin{aligned}
		Q(P;\tau) &\ll_{A} (1+P)^{-A}, \\
		Q_P(P;\tau) &\ll_A (1+P)^{-A}, \\
		Q_{PP}(P;\tau) &\ll_A \left(1+\frac{P^2}{P^2-\tau^2} \right) (1+P)^{-A}, \\
		Q_{P\tau}(P;\tau) &\ll_A \left(1+\frac{P^2}{P^2-\tau^2} \right) (1+P)^{-A} .
	\end{aligned}$$
\end{lem}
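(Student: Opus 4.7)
My plan is to first make a hyperbolic change of variable that removes the explicit $P$-dependence from the envelope of \eqref{eq:Q-P-tau-convolution}. Setting $\eta := \arccosh(P/|\tau|) \ge 0$, so that $P = |\tau|\cosh\eta$ and $\sqrt{P^2-\tau^2} = |\tau|\sinh\eta$, the addition formulae identify the phases appearing in \eqref{eq:Q-P-tau-convolution} as $|\tau|\cosh(\ell-\eta)$ and $-|\tau|\sinh(\ell-\eta)$. Substituting $\ell \mapsto \ell + \eta$ then gives
$$Q(P;\tau) = \frac{1}{2}\int_{-\infty}^{\infty} g(\ell+\eta)\, F(\ell)\, d\ell, \qquad F(\ell) := e^{-2\pi|\tau|\cos\alpha\cosh\ell}\cos(2\pi|\tau|\sin\alpha\sinh\ell),$$
a representation in which $P$ enters only through the shift in $g$. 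Crucially, $F$ is even in $\ell$, and $g$ (together with its derivatives) inherits the evenness of $h$.

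For the sup-norm bound $Q \ll_A (1+P)^{-A}$, I would use that $h$ is entire and polynomially decaying in horizontal strips, which by a contour shift in \eqref{eq:g-decay} yields $g^{(k)}(r) \ll_{A,k} e^{-A|r|}$ for every $A \ge 0$. Splitting the $\ell$-integral at $\ell = -\eta/2$, the portion $\ell \ge -\eta/2$ is handled by $|g(\ell+\eta)| \ll_A e^{-A\eta/2}$ combined with the inequality $e^{\eta} \ge P/|\tau|$; the portion $\ell \le -\eta/2$ is handled by $|\tau|\cosh\ell \gtrsim |\tau|e^{\eta/2}/2 \ge (|\tau|P)^{1/2}/2$, which yields decay from the envelope $F$. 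A short case split based on whether $|\tau| \le 1$ or $|\tau| > 1$ then produces the stated bound.

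For $Q_P$, I would differentiate under the integral, noting $\partial_P \eta = 1/\sqrt{P^2-\tau^2}$ and thereby obtaining
$$Q_P(P;\tau) = \frac{1}{2\sqrt{P^2-\tau^2}}\int_{-\infty}^{\infty} g'(\ell+\eta)\, F(\ell)\, d\ell.$$
Because $h$ is even, $g'$ is odd; combined with the evenness of $F$, the integral $J(\eta) := \int g'(\ell+\eta)F(\ell)d\ell$ is odd in $\eta$. Symmetrizing $\ell \mapsto -\ell$ and writing $g'(\ell+\eta)-g'(-\ell+\eta) = \int_{-\ell}^{\ell} g''(s+\eta)\, ds$ via the fundamental theorem of calculus presents $J(\eta)/\sinh\eta$ as an integral of $g''$ against a bounded variant of $F$, to which the same split/decay argument from the previous step applies. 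Since $\sqrt{P^2-\tau^2} = |\tau|\sinh\eta$, this produces the bound $Q_P \ll_A (1+P)^{-A}$ without the singular prefactor.

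For the second derivatives $Q_{PP}$ and $Q_{P\tau}$, no further parity cancellation is available. Differentiating $Q_P$ once more either brings down another chain-rule factor $1/\sqrt{P^2-\tau^2}$ from the $\eta$-dependence (yielding $1/(P^2-\tau^2)$ overall), or differentiates the prefactor $1/\sqrt{P^2-\tau^2}$ to produce $P/(P^2-\tau^2)^{3/2}$ or $\tau/(P^2-\tau^2)^{3/2}$ times $J(\eta) = O(\sinh\eta \cdot (1+P)^{-A})$. In both cases the net prefactor is $1 + P^2/(P^2-\tau^2) = 1 + \coth^2\eta$, while the remaining integral is controlled by the same argument applied to $g''$ or to $g'$. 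The main technical obstacle throughout is the uniform control across the boundary $P = |\tau|$, where $\eta \to 0$ and the apparent $\sinh^{-1}\eta$-singularity in the first-derivative case must be cancelled by the parity-based identity, and simultaneously in the regime $|\tau| \to 0$, where the envelope $F$ loses its localization; both difficulties are navigated by combining the even/odd cancellation above with careful case analysis on the size of $|\tau|$.
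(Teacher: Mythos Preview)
Your change of variable $\ell \mapsto \ell + \eta$ with $\eta = \arccosh(P/|\tau|)$ is essentially the same device the paper uses implicitly (the paper writes $\cosh r = P/|\tau|$ and reduces the envelope to $e^{-2\pi\cos\alpha|\tau|\cosh(r\pm\ell)}$), and your argument for $Q$ itself can be made to work along the lines you sketch, though the paper's single inequality $e^{-(A+1)|\ell|}(1+|\tau|e^{|r\pm\ell|})^{-A} \le e^{-|\ell|}(1+|\tau|e^{|r|})^{-A} \le e^{-|\ell|}(1+P)^{-A}$ is considerably cleaner than a split at $\ell=-\eta/2$ followed by case analysis on $|\tau|$.

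The genuine problem is your treatment of $Q_P$. First, the symmetrization is mis-stated: averaging $\ell\mapsto-\ell$ against the even $F$ produces the \emph{sum} $g'(\ell+\eta)+g'(-\ell+\eta)$, and then oddness of $g'$ turns this into $g'(\ell+\eta)-g'(\ell-\eta)=\int_{-\eta}^{\eta}g''(\ell+s)\,ds$, so that
\[
J(\eta)=\tfrac12\int_{-\eta}^{\eta}\Bigl(\int_{-\infty}^{\infty}g''(\ell+s)F(\ell)\,d\ell\Bigr)ds.
\]
Your formula $\int_{-\ell}^{\ell}g''(s+\eta)\,ds$ is the antisymmetric part in $\ell$, which integrates to zero against $F$ and carries no information. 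Second, and more seriously, even with the corrected identity the parity argument only yields $|J(\eta)|\ll \eta\cdot\sup_{|s|\le\eta}\bigl|\int g''(\ell+s)F(\ell)\,d\ell\bigr|$, and the inner integral is controlled by your $Q$-type argument as $\ll_A(1+|\tau|\cosh s)^{-A}$, which at $s=0$ is $(1+|\tau|)^{-A}$, not $(1+P)^{-A}$. Dividing by $\sqrt{P^2-\tau^2}=|\tau|\sinh\eta\asymp|\tau|\eta$ for small $\eta$ then gives $Q_P\ll 1/|\tau|$, which blows up as $|\tau|\to0$ with $P\asymp|\tau|$. No case-split on the size of $|\tau|$ repairs this: the parity identity has traded the $(P^2-\tau^2)^{-1/2}$ singularity for a $1/|\tau|$, and the regime where both $\eta$ and $|\tau|$ are small is a genuine obstruction.

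The paper avoids this by differentiating under the integral in the \emph{original} (unshifted) representation \eqref{eq:Q-P-tau-convolution}, after symmetrizing in $\ell$. Differentiation in $P$ then produces factors of the type $\tfrac{P}{\sqrt{P^2-\tau^2}}$ multiplied by $\sin\bigl(2\pi\sin\alpha\sqrt{P^2-\tau^2}\cosh\ell\bigr)$ or $\sinh\bigl(2\pi\cos\alpha\sqrt{P^2-\tau^2}\sinh\ell\bigr)$; the elementary bounds $|\sin x|,|\tanh x|\le\min(1,|x|)$ cancel the $1/\sqrt{P^2-\tau^2}$ \emph{pointwise in the integrand}, leaving only a harmless extra factor $(1+P)\cosh^2\ell$ that is absorbed by the super-exponential decay of $g$. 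This local cancellation is precisely what makes the bound uniform in $|\tau|$, and it is the ingredient your parity-based route is missing.
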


\begin{proof} We begin by writing the integral defining $Q$ symmetric around the origin in $\ell$: 

\begin{equation}\begin{aligned}
	Q(P;\tau) =& \frac{1}{2} \int_{-\infty}^{\infty}  g(\ell) e^{-2\pi \cos(\alpha) P \cosh(\ell)} \\ & \times \Biggl[  \cosh\left(2\pi \cos(\alpha) \sqrt{P^2-\tau^2} \sinh(\ell)\right)  \cos\left(2\pi \sin(\alpha) \sqrt{P^2-\tau^2} \cosh(\ell)\right) \\ & \hspace{8.5cm} \times \cos\left(2\pi \sin(\alpha) P \sinh(\ell)\right) \\
	  & \quad +  \sinh\left(2\pi \cos(\alpha) \sqrt{P^2-\tau^2} \sinh(\ell)\right) \sin\left(2\pi \sin(\alpha) \sqrt{P^2-\tau^2} \cosh(\ell)\right) \\ & \hspace{8.5cm} \times \sin\left(2\pi \sin(\alpha) P \sinh(\ell)\right) \Biggr]  
	 d\ell.
	\label{eq:Q-symmetric}
\end{aligned}
\end{equation}
If we let $\cosh(r) = P/|\tau|$ with $r \ge 0$, then we have
$$\begin{aligned}
Q(P;\tau) &\ll \int_{-\infty}^{\infty} |g(\ell)| e^{-2\pi \cos(\alpha) P \cosh(\ell)} \cosh\left(2\pi \cos(\alpha) \sqrt{P^2-\tau^2} \sinh(\ell)\right) d\ell \\
&\ll \sum_{\pm} \int_{-\infty}^{\infty} |g(\ell)| e^{-2\pi \cos(\alpha) |\tau| \cosh(r \pm \ell)} d\ell \\
& \ll_{A} \sum_{\pm} \int_{-\infty}^{\infty} e^{-(A+1)|\ell|} \left(1+|\tau|e^{|r\pm \ell|}\right)^{-A} d\ell \\
&\ll_{A} \sum_{\pm} \int_{-\infty}^{\infty} e^{-|\ell|} \left(1+|\tau| e^{|r|} \right)^{-A} d\ell \\
& \ll_{A} (1+P)^{-A},
\end{aligned}$$
where we have used the super-exponential decay of $g$, see \eqref{eq:g-decay}. Similarly, we have
$$\begin{aligned}
	Q_P(P;\tau) &\ll \int_{-\infty}^{\infty} |g(\ell)| \left(1+\frac{P}{\sqrt{P^2-\tau^2}} \min\left\{1, \sqrt{P^2-\tau^2} \cosh(\ell) \right\}\right) \cosh(\ell)  \\
	&\quad \times e^{-2\pi \cos(\alpha) P \cosh(\ell)} \cosh\left(2\pi \cos(\alpha) \sqrt{P^2-\tau^2} \sinh(\ell)\right) d\ell \\
	&\ll \left(1+P\right) \sum_{\pm} \int_{-\infty}^{\infty} |g(\ell)| \cosh(\ell)^2  e^{-2\pi \cos(\alpha) \cosh(r \pm \ell)} d\ell \\
	& \ll_A  (1+P)^{-A}
\end{aligned}$$
and
$$\begin{aligned}
	Q_{PP}(P;\tau) &\ll \int_{-\infty}^{\infty}  |g(\ell)| \cosh(\ell)^2 e^{-2\pi \cos(\alpha) P \cosh(\ell)} \cosh\left(2\pi \cos(\alpha) \sqrt{P^2-\tau^2} \sinh(\ell)\right)  \\
	&\quad \times \!\!  \left[1+\left(\frac{1+P}{\sqrt{P^2-\tau^2}}+ \frac{P^2}{(P^2-\tau^2)^{\frac{3}{2}}} \right) \min\left\{1, \sqrt{P^2-\tau^2} \cosh(\ell) \right\}+\frac{P^2}{P^2-\tau^2} \right] \! d\ell \\
	&\ll \left(1+P+\frac{P^2}{P^2-\tau^2}\right) \sum_{\pm} \int_{-\infty}^{\infty} |g(\ell)| \cosh(\ell)^3  e^{-2\pi \cos(\alpha) \cosh(r \pm \ell)} d\ell \\
	& \ll_A  \left(1+\frac{P^2}{P^2-\tau^2} \right) (1+P)^{-A}.
\end{aligned}$$
The last estimate equally holds for $Q_{P\tau}(P;\tau)$.
\end{proof}

\begin{cor} 	\label{cor:M-infty-bounds} $\Phi_{\infty}$ is continuous and smooth as long as $P(\gamma) > |\tau(\gamma)|$. Furthermore, we have the following bounds,
	$$\begin{aligned}
		\Phi_{\infty}(\gamma) &\ll_A (1+P(\gamma))^{-A},\\
		D\Phi_{\infty}(\gamma) & \ll_{D,A,\epsilon}   \left(P(\gamma)^{\frac{1}{2}}+ \frac{P(\gamma)^2 }{(P(\gamma)^2-\tau(\gamma)^2)^{\frac{1}{2}+\epsilon}} \right) (1+P(\gamma))^{-A},
	\end{aligned}$$
 where $D$ is any differential of order $1$. In particular, we have $\Phi_{\infty} \in L^2(B_{\infty})$.
\end{cor}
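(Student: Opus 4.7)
The plan is to work from the representation \eqref{eq:M-def-2}. Since $Q(\,\cdot\,;\tau)$ is $C^1$ in $P$ (as is evident from the integrand of \eqref{eq:Q-symmetric}), the Stieltjes integral may be rewritten as
$$ \Phi_{\infty}(\gamma) = -\frac{\sqrt{2}}{\pi} \int_0^{\infty} \frac{1}{\sqrt{v}} Q_P(v+P(\gamma); \tau(\gamma))\, dv. $$
Plugging in $|Q_P(v+P;\tau)| \ll_A (1+v+P)^{-A}$ from Lemma \ref{lem:Q-upper-bounds} and the elementary estimate $\int_0^\infty v^{-1/2}(1+v+P)^{-A} dv \ll_A (1+P)^{1/2-A}$, one obtains the claimed pointwise decay of $\Phi_\infty$ after enlarging $A$ by $1/2$. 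Continuity of $\Phi_\infty$ on all of $B_\infty$ then follows by dominated convergence applied to this representation, and membership in $L^2(B_\infty)$ from the uniform super-polynomial decay.

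For the derivative bound, in the open region $\{P(\gamma) > |\tau(\gamma)|\}$ the estimates on $Q_{PP}$ and $Q_{P\tau}$ in Lemma \ref{lem:Q-upper-bounds} are locally uniform in $(P,\tau)$, which justifies differentiation under the integral sign:
$$ D\Phi_{\infty}(\gamma) = -\frac{\sqrt{2}}{\pi} \int_0^{\infty} \frac{1}{\sqrt{v}} \bigl[ Q_{PP}(v+P;\tau) DP + Q_{P\tau}(v+P;\tau) D\tau \bigr] dv. $$
Using $|DP|, |D\tau| \ll \sqrt{P}$ and again Lemma \ref{lem:Q-upper-bounds}, the task reduces to estimating
$$ I(P,\tau) := \int_0^\infty \frac{1}{\sqrt v} \left( 1 + \frac{(v+P)^2}{(v+P)^2 - \tau^2} \right) (1+v+P)^{-A} \, dv. $$
The contribution of the constant $1$ inside the bracket is $O_A((1+P)^{1/2-A})$, which combined with the $\sqrt{P}$ prefactor supplies the $P^{1/2}(1+P)^{-A}$ piece after enlarging $A$. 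For the remaining contribution, the plan is to split at $v = P$: for $v \ge P$ the quotient $(v+P)^2/((v+P)^2-\tau^2)$ is uniformly $O(1)$ and contributes as above; for $v \le P$ one uses the factorisation $(v+P)^2 - \tau^2 = (v+P-|\tau|)(v+P+|\tau|)$ and the lower bound $v+P+|\tau| \gg P$ to bound the quotient by $\ll P/(v+P-|\tau|)$, followed by the substitution $v = (P-|\tau|)u$ reducing to the convergent integral $\int_0^\infty du/(\sqrt u (1+u))$. This yields a bound of order $P/(P-|\tau|)^{1/2}(1+P)^{-A}$; combined with the $\sqrt P$ prefactor and the identity $(P-|\tau|)(P+|\tau|) = P^2 - \tau^2$ with $P+|\tau| \asymp P$, we arrive at $P^2/(P^2-\tau^2)^{1/2}(1+P)^{-A}$, which (allowing any $\epsilon > 0$ by absorbing an extra $P^{2\epsilon}$ factor into $(1+P)^{-A}$ at the expense of enlarging $A$) dominates the claimed bound.

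Smoothness on $\{P > |\tau|\}$ then follows by iterating the differentiation-under-the-integral argument; each further derivative produces at most one additional factor of $(P-|\tau|)^{-1}$ inside the integrand, which remains integrable against $v^{-1/2}$ by the same splitting. The main technical delicacy is the joint treatment of the $v^{-1/2}$ singularity at $v=0$ and the potential blow-up of $Q_{PP}, Q_{P\tau}$ near $P = |\tau|$; the split at $v = P$ is what cleanly separates these two regimes and makes the bookkeeping work out.
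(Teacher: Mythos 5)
Your proposal is correct and follows essentially the same route as the paper: both start from \eqref{eq:M-def-2}, reduce everything to the bounds on $Q_P$, $Q_{PP}$, $Q_{P\tau}$ in Lemma \ref{lem:Q-upper-bounds}, and differentiate under the integral sign using $|DP|,|D\tau|\ll P^{1/2}$. Your evaluation of the remaining $v$-integral (splitting at $v=P$ and substituting $v=(P-|\tau|)u$) is a slightly cleaner piece of bookkeeping than the paper's and in fact gives the stated derivative bound with $\epsilon=0$.
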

\begin{proof} We have
	$$\begin{aligned}
	\Phi_{\infty}(\gamma) &\ll \int_0^{\infty} \frac{1}{\sqrt{v}} Q_P(v+P(\gamma);\tau(\gamma)) dv \\
	&\ll_A \int_0^{\infty} \frac{1}{\sqrt{v}} \left(1+ v+P(\gamma) \right)^{-A-1} dv \\
	&\ll_A (1+P(\gamma))^{-A}.
	\end{aligned}$$
	Likewise, we have
	$$\begin{aligned}
	D\Phi_{\infty}(\gamma) &\ll_D \int_0^{\infty} \frac{1}{\sqrt{v}} P(\gamma)^{\frac{1}{2}} \left(|Q_{PP}(v+P(\gamma);\tau(\gamma))| + |Q_{P\tau}(v+P(\gamma);\tau(\gamma))| \right) dv \\
	&\ll_{D,A} P(\gamma)^{\frac{1}{2}} \int_0^{\infty} \frac{1}{\sqrt{v}} \left(1+\frac{v^2+P(\gamma)^2}{v^2+vP(\gamma)+P(\gamma)^2-\tau(\gamma)^2}\right) \left(1+ v+P(\gamma) \right)^{-A-2} dv \\
	&\ll_{D,A,\epsilon}   \left(1+ \frac{P(\gamma)^2 \cdot  (P(\gamma)^2-\tau(\gamma)^2)^{-\epsilon}}{(P(\gamma)^2-\tau(\gamma)^2)+(P(\gamma)^2-\tau(\gamma)^2)^{\frac{3}{4}}+P(\gamma)^{\frac{1}{2}}(P(\gamma)^2-\tau(\gamma)^2)^{\frac{1}{2}}} \right) \\
	& \quad \times P(\gamma)^{\frac{1}{2}} (1+P(\gamma))^{-A}.
	\end{aligned}$$
	The continuity as well as the smoothness for $P(\gamma) > |\tau(\gamma)|$ are also clear as differntiation only yields denominators being a power of $P^2-\tau^2$ and the extra powers of $\cosh(\ell)$ are compensated by the super-exponentially decaying function $g$, see \eqref{eq:g-decay}.

\end{proof}

\subsection{Quantum harmonic oscillator}

Let $h(t;\tau)$ and $\Phi_{\infty}$ be as in Theorem \ref{thm:Existence-test-function}. We shall assume again, that $\alpha$ is fixed and any implied constants may depend on $\alpha$ without further mention. $h(t;\tau)$ satisfies the differential equation
\begin{equation}
	h_{\tau\tau}(t;\tau) + \left( -4 \pi^2+\frac{\frac{1}{4}+t^2}{\tau^2} \right)h(t;\tau) = 0.
	\label{eq:h-diff-eq}
\end{equation}
This follows immediately from the differential equation for the $K$-Bessel function.

\begin{lem} \label{lem:PDE} Whenever $P(\gamma)>|\tau(\gamma)|$, $\Phi_{\infty}$ satisfies the quantum harmonic oscillator partial differential equation
	\begin{equation}
		-\Delta \Phi_{\infty} (\gamma) + 4 \pi^2 \det(\gamma) \Phi_{\infty}(\gamma)= 0,
		\label{eq:spherical-PDE}
	\end{equation}
	where the Laplace operator $\Delta$ is defined as having Fourier multiplier $-4\pi^2 \det(\gamma)$. Moreover, we have $\Phi_{\infty} \in V_{0,2\pi} \subset \Omega_{\infty}$.
\end{lem}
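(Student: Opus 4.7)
The plan is to verify the partial differential equation \eqref{eq:spherical-PDE} pointwise on the smooth region $\{P(\gamma) > |\tau(\gamma)|\}$ by a direct chain-rule computation in the $(u,\tau)$ variables, and then to deduce the membership $\Phi_{\infty} \in V_{0,2\pi} \subset \Omega_{\infty}$ via an $L^2$-approximation argument that absorbs the null cone $\{P=|\tau|\}$ where smoothness may fail.

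For the pointwise PDE, since $\Phi_\infty = k(u;\tau)/|\tau|$ is bi-$K_\infty$-invariant, I would express the Cartesian operator $\Delta = \partial_a^2+\partial_d^2-\partial_b^2-\partial_c^2$ as a second-order operator in $(u,\tau)$ via the identities $P = a^2+b^2+c^2+d^2$, $\tau = a^2-b^2-c^2+d^2$ and $P = |\tau|(2u+1)$. A routine computation gives $\Delta \Phi_\infty$ as a combination of $k_{uu},k_u,k_{u\tau},k_{\tau},k_{\tau\tau}$ and $k$ (with coefficients that are rational in $u,\tau$). Differentiating the integral representation \eqref{eq:inverse-selberg} under the integral sign, the ODE \eqref{eq:Xi-diff-eq} collapses $u(u+1)\Xi''_{1/2+it} + (2u+1)\Xi'_{1/2+it}$ into $-(\tfrac14+t^2)\Xi_{1/2+it}$, while \eqref{eq:h-diff-eq} turns $h_{\tau\tau}$ into $\bigl(4\pi^2 - (\tfrac14+t^2)/\tau^2\bigr) h$. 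When these are inserted into $-\Delta \Phi_\infty + 4\pi^2\tau\Phi_\infty$, the $(\tfrac14+t^2)/\tau^2$ contributions from the two ODEs cancel exactly and the remaining $4\pi^2$ term cancels the $4\pi^2\tau \Phi_\infty$; thus the integrand is zero and the PDE holds wherever $P > |\tau|$. This is the separation-of-variables mechanism behind \eqref{eq:inverse-selberg}: each ``Fourier mode'' $h(t;\tau)\Xi_{1/2+it}(u)/|\tau|$ in the $t$-integral is an eigenfunction of the combined operator.

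For the membership in $V_{0,2\pi}$, recall this is a closed subspace of $L^2(B_\infty)$, characterised on the dense subset of Schwartz functions by bi-$K_\infty$-invariance together with \eqref{eq:isotypical-PDE} at $m=0$, $\omega=2\pi$. We already know $\Phi_\infty$ is continuous, bi-$K_\infty$-invariant, and lies in $L^2(B_\infty)$ by Corollary \ref{cor:M-infty-bounds}. Following \cite[\S 6]{SupThetaI}, I would introduce a family of smooth cut-offs $\chi_\epsilon$ vanishing in an $\epsilon$-neighbourhood of the null cone $\{P=|\tau|\}$ and identically $1$ outside a $2\epsilon$-neighbourhood, so that $\chi_\epsilon \Phi_\infty \to \Phi_\infty$ in $L^2$. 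To show $\Phi_\infty \perp V_{0,2\pi}^{\perp}$, one tests against a Schwartz function $\Psi \in V_{0,2\pi}^{\perp}$ and applies Green's theorem to the second-order operator $-\Delta + 4\pi^2 \det$: the bulk contribution vanishes by the pointwise PDE just established, leaving only a surface integral on the $\epsilon$-tube around the null cone. Sending $\epsilon \to 0$ and invoking the gradient estimate of Corollary \ref{cor:M-infty-bounds} to kill the boundary term yields $\Phi_\infty \in V_{0,2\pi}$.

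The delicate step is the Green's-theorem boundary estimate. The gradient bound in Corollary \ref{cor:M-infty-bounds} contains a factor $(P^2-\tau^2)^{-1/2-\epsilon}$ which is only borderline integrable across transverse slices of the null cone, so the cut-offs must be chosen adapted to the Lorentzian geometry of $\det = 0$, for instance as a smooth function of $P^2-\tau^2$, and the surface integrals controlled by balancing this near-singular gradient against the area of the $\epsilon$-tube. This is precisely the analytic core of the corresponding argument in \cite[\S 6]{SupThetaI} for the extended Bergman kernel, and transposes to the present setting with only cosmetic changes.
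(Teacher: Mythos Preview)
Your proposal is correct and follows essentially the same two-part strategy as the paper: a pointwise verification of \eqref{eq:spherical-PDE} via differentiation under the integral sign combined with the two ODEs \eqref{eq:Xi-diff-eq} and \eqref{eq:h-diff-eq}, followed by a Green's-theorem boundary argument to obtain $\Phi_\infty \in V_{0,2\pi}$. The paper carries out the first part in $(P,\tau)$ rather than $(u,\tau)$ coordinates (yielding $\Delta\Phi = \tau\Phi_{PP}+2P\Phi_{P\tau}+2\Phi_\tau+\tau\Phi_{\tau\tau}$), and for the second part uses excised domains $V_R = \{P-|\tau|\ge R^{-100}P,\ P\le R\}$ with $R\to\infty$ instead of cutoff multipliers $\chi_\epsilon$; both differences are cosmetic.

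One point to sharpen: you propose to ``test against a Schwartz function $\Psi \in V_{0,2\pi}^{\perp}$'', but Green's identity for $L=-\Delta+4\pi^2\det$ only yields $\langle\Phi_\infty,L\Psi\rangle$ (plus boundary), not $\langle\Phi_\infty,\Psi\rangle$. You need $\Psi$ to be an $L$-eigenfunction with nonzero eigenvalue in order to divide through. The paper does exactly this by testing against Schwartz $\Phi'\in V_{m,2\pi}$ for each $m\neq 0$, where $L\Phi'=2\pi m\,\Phi'$, and then using density of such $\Phi'$ in $V_{m,2\pi}$ to conclude $\Phi_\infty\perp V_{m,2\pi}$ for all $m\neq 0$. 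This is the correct formulation of what you sketched.
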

\begin{proof} We shall temporarily drop the subscript $\infty$ from $\Phi_{\infty}$ as to leave space for subscripts indicating partial derivatives. Let $\gamma = [a,b,c]+d \in B_{\infty}$, then $\Delta$ is given explicitly by $\frac{1}{4} (\frac{\partial^2}{\partial a^2}-\frac{\partial^2}{\partial b^2}-\frac{\partial^2}{\partial c^2}+\frac{\partial^2}{\partial d^2})$. Since $\Phi$ only depends on $\tau$ and $P$, we find
	$$
	\Delta \Phi =  \tau \Phi_{PP} + 2 P \Phi_{P\tau} +2 \Phi_{\tau} +\tau \Phi_{\tau \tau}.
	$$
	Assume now $P > |\tau|$ and suppose first $\tau > 0$. Due to Lebesgue dominated convergence (see Appendix \ref{app:bounds-special} Lemmata \ref{lem:h(t,tau)-bounds}, \ref{lem:Xi-derivative-bounds} for bounds on the special functions), we may compute: 
	{\allowdisplaybreaks
		\begin{align}
			\Phi(\gamma) &=	\frac{1}{4 \pi} \int_{-\infty}^{\infty} \frac{1}{\tau} \Xi_{\frac{1}{2}+it}\left( \frac{P}{2 \tau}- \frac{1}{2} \right) h(t;\tau) \tanh(\pi t) t dt, \\
			\! \Phi_P(\gamma) &=	\frac{1}{4 \pi} \int_{-\infty}^{\infty} \frac{1}{2\tau^2} \Xi'_{\frac{1}{2}+it}\left( \frac{P}{2 \tau}- \frac{1}{2} \right) h(t;\tau) \tanh(\pi t) t dt, \\
			\Phi_{\tau}(\gamma) &= \begin{multlined}[t]	\frac{1}{4 \pi} \int_{-\infty}^{\infty} \Biggl[- \frac{1}{\tau^2} \Xi_{\frac{1}{2}+it}\left( \frac{P}{2 \tau}- \frac{1}{2} \right) h(t;\tau)-\frac{P}{2\tau^3} \Xi'_{\frac{1}{2}+it}\left( \frac{P}{2 \tau}-\frac{1}{2}\right) h(t;\tau) \\
				+\frac{1}{\tau} \Xi_{\frac{1}{2}+it}\left(\frac{P}{2 \tau}-\frac{1}{2} \right) h_{\tau}(t;\tau) \Biggr	] \tanh(\pi t) t dt,
			\end{multlined} \\
			\Phi_{PP}(\gamma) &= \frac{1}{4 \pi} \int_{-\infty}^{\infty} \frac{1}{4\tau^3} \Xi''_{\frac{1}{2}+it}\left( \frac{P}{2 \tau}- \frac{1}{2} \right) h(t;\tau)  \tanh(\pi t) t dt,\\
			\Phi_{P\tau}(\gamma) &= \begin{multlined}[t] \frac{1}{4 \pi} \int_{-\infty}^{\infty} \Biggl(- \frac{1}{\tau^3} \Xi'_{\frac{1}{2}+it}\left( \frac{P}{2 \tau}- \frac{1}{2} \right) h(t;\tau)-\frac{P}{4\tau^4} \Xi''_{\frac{1}{2}+it}\left( \frac{P}{2 \tau}-\frac{1}{2}\right) h(t;\tau) \\
				+\frac{1}{2\tau^2} \Xi'_{\frac{1}{2}+it}\left(\frac{P}{2 \tau}-\frac{1}{2} \right) h_{\tau}(t;\tau) \Biggr) \tanh(\pi t) t dt, \end{multlined}\\
			\Phi_{\tau \tau}(\gamma) &= \begin{multlined}[t] \frac{1}{4 \pi} \int_{-\infty}^{\infty} \Biggl( \frac{2}{\tau^3} \Xi_{\frac{1}{2}+it}\left( \frac{P}{2 \tau}- \frac{1}{2} \right) h(t;\tau)+\frac{P}{2\tau^4} \Xi'_{\frac{1}{2}+it}\left( \frac{P}{2 \tau}-\frac{1}{2}\right) h(t;\tau)\\
				-\frac{1}{\tau^2} \Xi_{\frac{1}{2}+it}\left(\frac{P}{2 \tau}-\frac{1}{2} \right) h_{\tau}(t;\tau) + \frac{3P}{2\tau^4} \Xi'_{\frac{1}{2}+it}\left( \frac{P}{2 \tau}- \frac{1}{2} \right) h(t;\tau) \\
				+\frac{P^2}{4\tau^5} \Xi''_{\frac{1}{2}+it}\left( \frac{P}{2 \tau}- \frac{1}{2} \right) h(t;\tau) - \frac{P}{2 \tau^3} \Xi'_{\frac{1}{2}+it}\left( \frac{P}{2 \tau}- \frac{1}{2} \right) h_{\tau}(t;\tau) \\
				- \frac{1}{\tau^2} \Xi_{\frac{1}{2}+it}\left( \frac{P}{2 \tau}- \frac{1}{2} \right) h_{\tau}(t;\tau) - \frac{P}{2 \tau^3} \Xi'_{\frac{1}{2}+it}\left( \frac{P}{2 \tau}- \frac{1}{2} \right) h_{\tau}(t;\tau) \\
				+ \frac{1}{\tau} \Xi_{\frac{1}{2}+it}\left( \frac{P}{2 \tau}- \frac{1}{2} \right) h_{\tau \tau}(t;\tau) \Biggr) \tanh(\pi t) t dt.\end{multlined}
		\end{align}
	}
	We conclude
	\begin{align*}
		\Delta \Phi(\gamma) &=	\frac{1}{4 \pi} \int_{-\infty}^{\infty} \Biggl( \Xi''_{\frac{1}{2}+it}\left( \frac{P}{2 \tau}- \frac{1}{2} \right) \frac{1}{\tau^2} \left(\frac{1}{4}- \frac{P^2}{4\tau^2}\right) \\
		&\quad - \Xi'_{\frac{1}{2}+it}\left( \frac{P}{2 \tau}- \frac{1}{2} \right) \frac{P}{\tau^3}  h(t;\tau) + \Xi_{\frac{1}{2}+it}\left(\frac{P}{2\tau}- \frac{1}{2} \right) h_{\tau\tau}(t;\tau) \Biggr) \tanh(\pi t) t dt \\
		&= \frac{1}{4 \pi} \int_{-\infty}^{\infty}  \Xi_{\frac{1}{2}+it}\left( \frac{P}{2 \tau}- \frac{1}{2} \right) \left( h_{\tau\tau}(t;\tau)+\frac{1}{\tau^2} \left( \frac{1}{4}+t^2 \right) h(t;\tau) \right) \tanh(\pi t) t dt \\
		&=  \frac{4 \pi^2}{4 \pi} \int_{-\infty}^{\infty} \Xi_{\frac{1}{2}+it}\left( \frac{P}{2 \tau}- \frac{1}{2} \right)  h(t;\tau) \tanh(\pi t) t dt \\
		&= 4 \pi^2 \tau \Phi(\gamma),
		\label{eq:test-laplace}
	\end{align*}
	where we have used \eqref{eq:Xi-diff-eq} and \eqref{eq:h-diff-eq}. The result for $\tau<0$ follows immediately with the appropriate sign changes.
	
	It remains to show that $\Phi_{\infty} \in  V_{0,2\pi}$ as the decay condition follows from Corollary \ref{cor:M-infty-bounds}. Since $\Phi_{\infty}$ is not twice continuously differentiable everywhere this does not follow immediately. Instead, we employ the strategy from \cite[Lemma 6.5]{SupThetaI} and show that $\Phi_{\infty}$ is orthogonal to the spaces $V_{m,2\pi}$ for $m \neq 0$ by a limiting argument. Since $\Phi_{\infty} \in L^2(B_{\infty})$ by Corollary \ref{cor:M-infty-bounds}, it will follow from the discussion in \S\ref{sec:weil-rep} that $\Phi_{\infty} \in V_{0,2\pi}$. Let $\Phi' \in V_{m,2\pi}$, $m \neq 0$, be a Schwartz function satisfying the PDE \eqref{eq:isotypical-PDE} with $\omega = 2\pi$. Consider the domain
	$$
	V_R = \{ \gamma \in B_{\infty} \mid P(\gamma)-|\tau(\gamma)| \ge R^{-100}P(\gamma) \text{ and } P(\gamma) \le R \}.
	$$
It is compact and the function $\Phi_{\infty}$ is smooth in a neighbourhood of $V_R$. Let $L$ denote the operator $-\Delta+4\pi^2\det(\cdot)$. Then, by the continuity of $\Phi_{\infty}$, we have
\begin{equation}
	(m-0) \langle \Phi_{\infty},\Phi' \rangle = \lim_{R \to \infty} \left(\langle \Phi_{\infty}\restriction_{V_R} , L\Phi' \rangle-\langle L \Phi_{\infty}\restriction_{V_R}, \Phi' \rangle \right). 
	\label{eq:Minfty-M-int}
\end{equation}
	By Green's second identity/the divergence theorem, we have
	\begin{equation}
	\langle  \Phi_{\infty}\restriction_{V_R}, L\Phi' \rangle - \langle L \Phi_{\infty}\restriction_{V_R}, \Phi' \rangle  = \int_{\partial V_R} \left(\Phi_{\infty} (\nabla \overline{\Phi'})- (\nabla \Phi_{\infty}) \overline{\Phi'}   \right) \cdot \hat{\vect{n}} dA(\gamma),
	\label{eq:green-second-id}
	\end{equation}
	where $\nabla = \frac{1}{4}(\frac{\partial}{\partial a},-\frac{\partial}{\partial b},-\frac{\partial}{\partial c},\frac{\partial}{\partial d})^T$ in the coordinates $\gamma= [a,b,c]+d$. We shall show that the latter vanishes in the limit $R \to \infty$. For this, let $\delta =  R^{-100}$. The boundary $\partial V_R$ consists of the pieces
	$$\begin{aligned}
	E_R &= \{ \gamma \in B_{\infty} \mid P(\gamma)=R \text{ and } (1-\delta)R \ge |\tau(\gamma)| \}, \text{ and } \\
	F_R &= \{ \gamma \in B_{\infty} \mid P(\gamma) \le R \text{ and } P(\gamma)-|\tau(\gamma)| = \delta P(\gamma) \}.
	\end{aligned}$$
Using the bounds obtained in Corollary \ref{cor:M-infty-bounds}, we readily find
$$
\int_{E_R} \left(\Phi_{\infty} (\nabla \overline{\Phi'})- (\nabla \Phi_{\infty}) \overline{\Phi'}   \right) \cdot \hat{\vect{n}} dA(\gamma) \ll_{\epsilon} (R^{\frac{1}{2}}+ \delta^{-\frac{1}{2}-\epsilon} R^{\frac{3}{2}}) (1+R)^{-2023} R^{\frac{3}{2}} = o(1).
$$
The set $F_R$ is further split into the sets
$$
F_R(S) = \{ \gamma \in B_{\infty} \mid P(\gamma) \asymp S \text{ and } P(\gamma)-|\tau(\gamma)| = \delta P(\gamma) \}
$$
and
$$
F_{R,\eta} = \{ \gamma \in B_{\infty} \mid P(\gamma) \le \eta \text{ and } P(\gamma)-|\tau(\gamma)| = \delta P(\gamma) \},
$$
where $S$ runs dyadically between $R$ and $\eta= R^{-100}$. The surface area of $F_R(S)$ is $\ll \delta S^{\frac{3}{2}}$, hence
$$
\int_{F_R(S)} \left(\Phi_{\infty} (\nabla \overline{\Phi'})- (\nabla \Phi_{\infty}) \overline{\Phi'}   \right) \cdot \hat{\vect{n}} dA(\gamma) \ll_{\epsilon} (1+S^{\frac{1}{2}}+ \delta^{-\frac{1}{2}-\epsilon} S^{\frac{3}{2}}) (1+S)^{-2023} \delta S^{\frac{3}{2}} \ll R^{-1}.
$$
The surface area of $F_{R,\eta}$ is $\ll \eta^{\frac{3}{2}}$, hence
$$
\int_{F_{R,\eta}} \left(\Phi_{\infty} (\nabla \overline{\Phi'})- (\nabla \Phi_{\infty}) \overline{\Phi'}   \right) \cdot \hat{\vect{n}} dA(\gamma) \ll_{\epsilon} (1+\eta^{\frac{1}{2}}+ \delta^{-\frac{1}{2}-\epsilon} \eta^{\frac{3}{2}}) \eta^{\frac{3}{2}} = o(1).
$$
By combining these estimates and taking the limit $R \to \infty$, we conclude from \eqref{eq:Minfty-M-int}, \eqref{eq:green-second-id} that $\langle \Phi_{\infty},\Phi' \rangle = 0$ for $m \neq 0$, thus $\Phi_{\infty} \bot V_{m,2\pi}$ for $m \neq 0$ as the functions $\Phi'$ are dense, and consequently $\Phi_{\infty} \in \bigcap_{m \neq 0} V_{m,2\pi}^{\bot} =  V_{0,2\pi} \subset \Omega_{\infty}$.
\end{proof}

\section{Theta functions} \label{sec:theta}

In this Section, we collect the relvant machinery of \cite{SupThetaII}, in particular \S4, \S5, and the appendix thereof. 
Throughout, we let $h$ and $\Phi_{\infty}$ be as in Theorem \ref{thm:Existence-test-function}.

\subsection{Jacquet--Langlands lifts} Suppose $B$ is non-split ($\Leftrightarrow d_B >1$). For $\varphi \in \mathcal{F}$, we denote by $\varphi^{\JL}$ the arithmetically normalised newform in $L^2(\Gamma_0(d_BN) \backslash \mathbb{H})$ whose prime Hecke eigenvalues agree with those of $\varphi$ for almost all primes. The existence and uniqueness is guaranteed by the Jacquet--Langlands correspondence and strong multiplicity one.

In the case where $B= \Mat{2}{2}(\mathbb{Q})$ is split, we let $\varphi^{\JL}$ be the scalar multiple of $\varphi$ which is arithmetically normalised.

By the works of Iwaniec \cite{Iwaniec-Lower} and Hoffstein--Lockhart \cite{HoffLock}, we have
\begin{equation}
\|\varphi^{\JL}\|_2^2 = (d_BN(1+|t_{\varphi}|))^{o(1)} \cosh(\pi t_{\varphi})^{-1}.
\label{eq:L2-JL}
\end{equation}

\subsection{Theta function} 

For $z= x+iy \in \mathbb{H}$, $g \in \SL{2}(\mathbb{R})$, and $\ell \mid d_BN$, we let
\begin{equation}
\theta_{g,\ell}(z) = y \sum_{\gamma \in R(g;\ell)} \Phi_{\infty}\left(y^{\frac{1}{2}} \gamma\right) e(x\det(\gamma)).
\label{eq:theta-def}
\end{equation}
It is of moderate growth and is invariant under $\Gamma_0(d_BN)$. Furthermore, it is the classical counterpart of an adelic theta function $\Theta(g, \cdot)$. More precisely, we have
\begin{equation}
	\Theta(g,\tau_{\ell}s_{\infty}U_R^1)
	=
	\frac{\mu(\gcd(\ell,d_B))}{\ell}
	\theta_{g,\ell}( s_{\infty}i ),
	\label{eq:thetamaasstranform}
\end{equation}
for $\ell \div d_B N$ with $\mu$ the M{\"o}bius function, $\tau_{\ell}$ any matrix satisfying
\begin{equation}
	\tau_{\ell} \equiv \begin{pmatrix} 0 & 1 \\ -1 & 0 \end{pmatrix} \mod{\ell}, \quad \tau_{\ell} \equiv \begin{pmatrix} 1 & 0 \\ 0 & 1 \end{pmatrix} \mod{ d_BN / \ell},
	\label{eq:tau-ell}
\end{equation}
 and $U_R^1 = \sm \widehat{\mathbb{Z}} & \widehat{\mathbb{Z}} \\ d_BN \widehat{\mathbb{Z}} & \widehat{\mathbb{Z}} \esm \cap \SL{2}(\mathbb{A}_f)$, where $\widehat{\mathbb{Z}}$ is the completion of $\mathbb{Z}$ inside $\mathbb{A}_f$.

\subsection{Main identity}

For each $\ell \div d_BN$, we choose an element $\tau_{\ell} \in \SL{2}(\mathbb{Z})$ satisfying

\begin{prop} \label{prop:main-id}
	Let $\varphi$ be a Hecke--Maa{\ss} newform with spectral parameter $t_{\varphi}$ and $g \in \SL{2}(\mathbb{R})$. We have
	\begin{equation}\label{eq:thetamaassinner}
		\frac
		{
			\langle \Theta(g,\cdot), \varphi^{ \JL} \rangle
		}
		{
			\langle \varphi^{\JL}, \varphi^{\JL} \rangle
		}
		= h(t_{\varphi}) \cosh(\alpha t_{\varphi})
		\frac{|\varphi(gi)|^2}{V_{d_B,N}}.
	\end{equation}
\end{prop}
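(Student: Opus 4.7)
The strategy is to reduce the identity to the analogous one proved in the prequel \cite{SupThetaII} for a Gaussian archimedean test function, and to isolate the new scalar factor $h(t_\varphi)\cosh(\alpha t_\varphi)$ provided by Theorem \ref{thm:Existence-test-function}. In \cite{SupThetaII}, the same formula was established with $\Phi_\infty(\vect{x}) = e^{-\pi\|\vect{x}\|_2^2}$, whose Selberg/Harish-Chandra transform is exactly $2\sqrt{|\tau|}K_{it}(2\pi|\tau|)$, yielding the right-hand side $|\varphi(gi)|^2/V_{d_B,N}$. The non-archimedean data in the present setup are identical to those of the prequel: the same Eichler order $R$, the same partially dualised lattices $R(\ell)$, the same choice of matrices $\tau_\ell$, and hence the same transform rule \eqref{eq:thetamaasstranform}. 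Only the archimedean contribution changes.

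Concretely, I would unfold the global inner product $\langle\Theta(g,\cdot),\varphi^{\JL}\rangle$ via the Whittaker expansion of $\varphi^{\JL}$, exactly as in \cite[\S 5]{SupThetaII}. The archimedean integral against $\Phi_\infty$ then reduces to pairing the bi-$K_\infty$-invariant function $\Phi_\infty$ against the spherical matrix coefficient $\Xi_{1/2+it_\varphi}$, weighted by the Whittaker function in the determinant variable. This is precisely the content of the Selberg/Harish-Chandra transform \eqref{eq:selberg}. By Theorem \ref{thm:Existence-test-function}, our $\Phi_\infty\in V_{0,2\pi}$ has transform $h(t;\tau) = h(t)\cosh(\alpha t)\cdot 2\sqrt{|\tau|}K_{it}(2\pi|\tau|)$, which differs from the Gaussian case only by the scalar factor $h(t)\cosh(\alpha t)$. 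Evaluating at $t=t_\varphi$, the $2\sqrt{|\tau|}K_{it_\varphi}(2\pi|\tau|)$ piece combines with the Whittaker coefficients of $\varphi^{\JL}$ to rebuild the factor $|\varphi(gi)|^2/V_{d_B,N}$ exactly as in the prequel, leaving the scalar $h(t_\varphi)\cosh(\alpha t_\varphi)$ outside.

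The principal obstacle is that $\Phi_\infty$ need not be Schwartz and is not everywhere smooth on the locus $P(\gamma)=|\tau(\gamma)|$, so one cannot substitute it directly into the prequel's derivation, which tacitly assumed a Schwartz test function. I would handle this by density: choose a sequence of Schwartz functions $\Phi_\infty^{(n)}\in V_{0,2\pi}$ with Selberg transforms $h^{(n)}(t;\tau)$ approximating $h(t;\tau)$, and with $\Phi_\infty^{(n)}\to\Phi_\infty$ in $L^2(B_\infty)$. Such an approximation is available because Schwartz vectors are dense in $V_{0,2\pi}$ and $\Phi_\infty\in V_{0,2\pi}$ by Lemma \ref{lem:PDE}. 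For each $n$ the prequel's identity applies verbatim. As $n\to\infty$, the theta functions $\Theta^{(n)}(g,\cdot)$ converge to $\Theta(g,\cdot)$ in $L^2(\Gamma_0(d_BN)\backslash\mathbb{H})$ because theta lifting is $L^2$-continuous in the Weil-representation vector, so the left-hand side passes to the limit. On the right-hand side, $h^{(n)}(t_\varphi)\cosh(\alpha t_\varphi) \to h(t_\varphi)\cosh(\alpha t_\varphi)$ by construction. This is the same $L^2$-closure strategy carried out in \cite[\S 6]{SupThetaI} for an analogously non-smooth extended Bergman kernel, and I would import that framework to conclude.
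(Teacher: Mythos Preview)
Your approach is correct and is essentially the paper's: reduce to the archimedean local computation via \cite[Appendix~A]{SupThetaII}, then identify the extra scalar $h(t_\varphi)\cosh(\alpha t_\varphi)$ through the Selberg/Harish-Chandra transform \eqref{eq:selberg} evaluated at $\tau=1$. The paper's proof is just this, in two lines: by \cite[Appendix~A]{SupThetaII} it suffices to compute $\langle \Phi_\infty\!\restriction_{\SL{2}(\mathbb{R})}, \Xi_{\frac{1}{2}+it}\rangle_{\SL{2}(\mathbb{R})} = 4\pi\int_0^\infty k(u;1)\Xi_{\frac{1}{2}+it}(u)\,du = h(t;1) = 2h(t)\cosh(\alpha t)K_{it}(2\pi)$ from \eqref{eq:selberg} and \eqref{eq:h-shape}.

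Your third paragraph's density argument is redundant here. The framework of \cite[Appendix~A]{SupThetaII} is set up not for Schwartz functions but for the larger class $\Omega_\infty$, which requires only $V_{0,\omega}$-membership and polynomial decay $(1+P(\gamma))^{-2-\delta}$; Theorem~\ref{thm:Existence-test-function} (via Lemma~\ref{lem:PDE} and Corollary~\ref{cor:M-infty-bounds}) has already established $\Phi_\infty \in V_{0,2\pi} \subset \Omega_\infty$, so the prequel's identity applies directly without further approximation. The $L^2$-closure work you describe is real, but it lives in the proof of Theorem~\ref{thm:Existence-test-function}, not here.
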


\begin{proof} By \cite[Appendix A]{SupThetaII}, it suffices to show
	$$
	\langle \Phi_{\infty} \restriction_{\SL{2}(\mathbb{R})} , \Xi_{\frac{1}{2}+it} \rangle_{\SL{2}(\mathbb{R})} = 2h(t) \cosh(\alpha t) K_{it}(2 \pi),
	$$
	which acts as a subsitute for the calculations in the proof of \cite[Prop.\@ A.14]{SupThetaII}. The former evaluates to
	$$
	4 \pi \int_0^{\infty} k(u;1) \Xi_{\frac{1}{2}+it}(u) du.
	$$
	Hence, the conclusion follows from \eqref{eq:selberg} and \eqref{eq:h-shape}.
\end{proof}

\subsection{$L^2$-bound}
\label{sec:theta-L2-bound}

\begin{prop} \label{prop:L2-bound} For $z_1, z_2 \in \Gamma \backslash \mathbb{H}$ and $g_j \in \SL{2}(\mathbb{R})$ with $g_j i = z_j$ for $j=1,2$, we have
	\begin{multline*}
		\frac{1}{V_{d_B,N}}	\sum_{\varphi} |h(t_{\varphi})|^2 \frac{\cosh(\alpha t_{\varphi})^2}{\cosh(\pi t_{\varphi})} \left( |\varphi(z_1)|^2-|\varphi(z_2)|^2 \right)^2 \\
		 \ll_{\epsilon} (d_BN)^{\epsilon} \sum_{i=1}^2 \sum_{\ell \div d_BN} \frac{1}{\ell} \int_{\frac{\sqrt{3}}{2} \frac{\ell^2}{d_BN}}^{\infty} \sum_{n \in \frac{1}{\ell}\mathbb{Z}} \left| \sum_{\substack{0 \neq \gamma \in R(\ell,g_i)\\\det(\gamma)=n}} \Phi_{\infty}\left(y^{\frac{1}{2}} \gamma\right) \right|^2 dy.
	\end{multline*}
\end{prop}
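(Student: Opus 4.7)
The plan is to combine Proposition \ref{prop:main-id} with Bessel's inequality for the cuspidal newforms $\{\varphi^{\JL}\}$, and then unfold the resulting $L^2$-norm of the adelic theta function using the decomposition into classical thetas $\theta_{g,\ell}$ supplied by \eqref{eq:thetamaasstranform}.

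Applying \eqref{eq:thetamaassinner} at both $g_1$ and $g_2$ and subtracting gives
$$
h(t_\varphi)\cosh(\alpha t_\varphi)\,\frac{|\varphi(z_1)|^2 - |\varphi(z_2)|^2}{V_{d_B,N}} = \frac{\langle \Theta(g_1,\cdot)-\Theta(g_2,\cdot),\;\varphi^{\JL}\rangle}{\|\varphi^{\JL}\|^2}.
$$
Squaring, dividing by $V_{d_B,N}\cosh(\pi t_\varphi)$, and inserting the lower bound $\|\varphi^{\JL}\|^2 \gg (d_BN(1+|t_\varphi|))^{-\epsilon}\cosh(\pi t_\varphi)^{-1}$ from \eqref{eq:L2-JL} allows us to replace $\cosh(\pi t_\varphi)\|\varphi^{\JL}\|^4$ in the denominator by $\|\varphi^{\JL}\|^2$ at the cost of a factor $(d_BN(1+|t_\varphi|))^{\epsilon}$; the spectral factor $(1+|t_\varphi|)^{\epsilon}$ is then absorbed into $|h(t_\varphi)|^2$ using the rapid decay hypothesis on $h$. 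Summing over $\varphi$ and applying Bessel's inequality---valid since distinct arithmetically normalised newforms $\varphi^{\JL}$ on $\Gamma_0(d_BN)\backslash\mathbb{H}$ are mutually orthogonal by multiplicity one---bounds the LHS of the proposition by
$$
(d_BN)^\epsilon\, V_{d_B,N}\,\|\Theta(g_1,\cdot)-\Theta(g_2,\cdot)\|^2.
$$

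Next, one decomposes $\|\Theta(g,\cdot)\|^2$ according to the Atkin--Lehner cosets $\{\tau_\ell U_R^1\}_{\ell \div d_BN}$. By \eqref{eq:thetamaasstranform}, on the $\tau_\ell$-coset $|\Theta|^2 = \ell^{-2}|\theta_{g,\ell}|^2$, and the Haar volume of the coset contributes an extra factor of $\ell/V_{d_B,N}$, yielding
$$
V_{d_B,N}\,\|\Theta(g,\cdot)\|^2 \;=\; \sum_{\ell \div d_BN}\frac{1}{\ell}\int_{\Gamma_0(d_BN)\backslash\mathbb{H}}|\theta_{g,\ell}(z)|^2\,\frac{dxdy}{y^2}.
$$
The $\gamma=0$ term in $\theta_{g_i,\ell}$ is independent of $g_i$ and cancels in the difference; the inequality $|\theta_1-\theta_2|^2 \le 2(|\theta_1|^2+|\theta_2|^2)$ then reduces matters to bounding, for each $i\in\{1,2\}$ and each $\ell$, the integral of $\bigl|y\sum_{0\neq\gamma\in R(\ell;g_i)}\Phi_\infty(y^{1/2}\gamma)e(x\det\gamma)\bigr|^2$ over $\Gamma_0(d_BN)\backslash\mathbb{H}$ against $dxdy/y^2$.

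Finally, in each such integral we unfold. Since $\Gamma_0(d_BN)\ni\sm 1 & 1 \\ 0 & 1\esm$, the integrand has period $1$ in $x$, so Parseval on $[0,1]$ gives $y^2\sum_{n\in\mathbb{Z}}|A_n(y)|^2$ with $A_n(y)=\sum_{0\neq\gamma\in R(\ell;g_i),\,\det\gamma=n}\Phi_\infty(y^{1/2}\gamma)$; we trivially inflate this to a sum over $n\in\frac{1}{\ell}\mathbb{Z}$. In the $y$-direction, Lemma \ref{lem:spacing} applied at level $d_BN$, combined with the Atkin--Lehner involution $w_{d_BN/\ell}$ that rescales imaginary parts near $i\infty$ by $\ell^2/(d_BN)$, confines the integration to $y \ge \tfrac{\sqrt{3}}{2}\ell^2/(d_BN)$. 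The $y^2$ from Parseval cancels the $y^{-2}$ of the hyperbolic measure, leaving $dy$, and the claimed bound follows.

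The main technical obstacle is the bookkeeping in the adelic-to-classical passage: pinning down that the coset $\tau_\ell U_R^1$ indeed contributes the weight $\ell/V_{d_B,N}$ and tracking the $\ell$-dependence of the cusp cut-off $\tfrac{\sqrt{3}}{2}\ell^2/(d_BN)$ through the Atkin--Lehner involution. The remaining ingredients---Bessel, Parseval, the $|\theta_1-\theta_2|^2 \le 2(|\theta_1|^2+|\theta_2|^2)$ step, and the cancellation of the $\gamma=0$ term---are routine.
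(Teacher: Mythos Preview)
Your overall strategy matches the paper's: apply Bessel's inequality to $\Theta(g_1,\cdot)-\Theta(g_2,\cdot)$ via Proposition~\ref{prop:main-id}, then unfold the adelic $L^2$-norm over the cosets $\tau_\ell$ into classical integrals of $|\theta_{g,\ell}|^2$, and finally apply Parseval in $x$. The paper simply quotes \cite[Lemma~4.1]{SupThetaII} for the unfolding and Siegel-domain bookkeeping, whereas you attempt to sketch it directly.

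There is, however, a concrete error in your Parseval step. For $\gamma\in R(\ell;g)$ one has $\det(\gamma)\in\tfrac{1}{\ell}\mathbb{Z}$, not $\mathbb{Z}$: at a prime $p\mid\ell$ the local component of $R(\ell)$ is the \emph{dual} lattice $(R\otimes\mathbb{Z}_p)^\vee$, whose elements can have determinant in $\tfrac{1}{p}\mathbb{Z}_p$. Consequently the Fourier expansion $\theta_{g,\ell}(z)=y\sum_\gamma\Phi_\infty(y^{1/2}\gamma)e(x\det\gamma)$ has period $\ell$ in $x$, not period $1$. Your assertion that ``$\Gamma_0(d_BN)\ni\sm 1&1\\0&1\esm$, so the integrand has period $1$'' implicitly assumes $\theta_{g,\ell}$ is $\Gamma_0(d_BN)$-invariant; it is not for $\ell>1$ --- the relation \eqref{eq:thetamaasstranform} shows that $\theta_{g,\ell}$ corresponds to the coset $\tau_\ell U_R^1$, and is automorphic only under the conjugate group $\tau_\ell^{-1}\Gamma_0(d_BN)\tau_\ell$ (intersected with $\SL2(\mathbb{Z})$). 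The Siegel set at the relevant cusp therefore has width $\ell$, and the paper accordingly integrates $\int_0^\ell\frac{1}{\ell^2}|\theta_{g,\ell}|^2\,dx$; Parseval on $[0,\ell]$ then \emph{directly} produces $\tfrac{1}{\ell}y^2\sum_{n\in\frac{1}{\ell}\mathbb{Z}}|\cdots|^2$, with no ``trivial inflation'' from $\mathbb{Z}$ to $\tfrac{1}{\ell}\mathbb{Z}$ needed. Once you replace period $1$ by period $\ell$ and adjust the measure factors accordingly, your argument goes through and coincides with the paper's.
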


\begin{proof} By \eqref{eq:thetamaassinner} and Bessel's inequality, we have 
\begin{equation}
\frac{1}{V_{d_B,N}}	\sum_{\varphi} |h(t_{\varphi}) \cosh(\alpha t_{\varphi})|^2 \|\varphi^{\JL}\|_2^2 \left( |\varphi(z_1)|^2-|\varphi(z_2)|^2 \right)^2 \le V_{d_B,N} \| \Theta(g_1,\cdot) -\Theta(g_2, \cdot)  \|_2^2.
	\label{eq:Theta-Bessel}
\end{equation}
The latter is bounded by (see \cite[Lemma 4.1]{SupThetaII})
\begin{equation}
	\ll \frac{V_{d_B,N}}{V_{1,d_BN}} \sum_{\ell | d_BN} \int_{\frac{\sqrt{3}}{2} \frac{\ell^2}{d_BN}}^{\infty} \int_0^\ell \frac{1}{\ell^2} \left| \theta_{g_1,\ell}(z)- \theta_{g_2,\ell}(z) \right|^2 dx \frac{dy}{y^2}.
	\label{eq:Theta-Siegel}
\end{equation}
The inner integral is further equal to
\begin{equation}
	\begin{aligned}
		\int_0^\ell \frac{1}{\ell^2} \left| \theta_{g_1,\ell}(z)- \theta_{g_2,\ell}(z) \right|^2 dx &= \frac{1}{\ell} y^2 \sum_{n \in \frac{1}{\ell}\mathbb{Z}} \left| \sum_{i=1}^2(-1)^i \sum_{\substack{0 \neq \gamma \in R(\ell,g_i)\\\det(\gamma)=n}} \Phi_{\infty}\left(y^{\frac{1}{2}} \gamma\right) \right|^2 \\
		&\le \frac{2}{\ell} y^2 \sum_{i=1}^2 \sum_{n \in \frac{1}{\ell}\mathbb{Z}} \left| \sum_{\substack{0 \neq \gamma \in R(\ell,g_i)\\\det(\gamma)=n}} \Phi_{\infty}\left(y^{\frac{1}{2}} \gamma\right) \right|^2.
		\label{eq:theta-unipotent}
	\end{aligned}
\end{equation}

\end{proof}

\section{Long spectral window}
\label{sec:longwindow}

\subsection{Asymptotic notation}
\label{sec:asymptotic-notation}

In order to simplify the discussion, we shall introduce some asymptotic notation in the spirit of \cite[\S2.4]{WeylTriple}. We shall write
$$
A \prec B  \quad \Leftrightarrow \quad A \ll_{\epsilon} (d_BNT)^{\epsilon} B
$$
and
$$
A \precsim B  \quad \Leftrightarrow \quad A \ll_{\epsilon} (d_BNT)^{\epsilon} B+ \text{\emph{inconsequential}},
$$
where we call a term \emph{inconsequential} if it is $\ll_A (d_BNT)^{-A}$ for any $A \ge 0$.
We call a smooth function $\omega : \mathbb{R}^n_{\ge 0} \to \mathbb{C}$ \emph{flat} iff
$$
x_1^{j_1}\cdots x_n^{j_n} \omega^{(j_1,\cdots,j_n)}(\vect{x}) \precsim_{\vect{j}} 1,
$$
for every $\vect{j} \in \mathbb{N}_0^{n}$. Since we make quite liberal use of flat functions, we shall reserve the letter $\omega$ for them. In particular, we allow the flat functions to change in each occurrence. Flat functions serve two functions for us. Firstly, we may split an integral
\begin{multline}
\int_0^{\infty} f(x) dx = \int_0^{\infty} f(x) \omega_1\left(\frac{x}{X_1}\right) dx \\ + \sum_{\substack{X_1 \le X_2 \le X_3 \\ \text{dyadic}}} \int_0^{\infty} f(x) \omega_2\left(\frac{x}{X_2}\right) dx + \int_0^{\infty} f(x) \omega_3\left(\frac{x}{X_3}\right) dx,
\label{eq:integral-cut}
\end{multline}
where $\omega_1$ has support in $[0,1]$, $\omega_2$ has compact support in $]0,\infty[$, and $\omega_3$ has support in $[1,\infty[$, see \cite[\S5]{Harcos-shifted-convolution-Fourier-coef} for details. Secondly, for a compactly supported flat function $\omega$, we may write
\begin{equation}
\omega(\vect{x}) = \int \omega_1(x_1; s_1) \cdots \omega_n(x_n; s_n) F(\vect{s}) d\vect{s} + \text{\emph{inconsequential}},
\label{eq:decoupling}
\end{equation}
where for each $i$, $\omega_i(\,\cdot\,; s_1)$ is a uniform family of compactly supported flat functions and $\int |F(\vect{s})|d\vect{s} \precsim 1$, see \cite[\S2.4]{WeylTriple}. This allows us to decouple the variables at the expense of a negligible error and an extra integral which multiplicatively increases any bound by at most $\precsim 1$. Hence, we may safely omit this extra integral in our analysis.

\subsection{The test function}
For the fourth moment bound over the long window, we choose the test function $\Phi_{\infty}$ to have Selberg/Harish-Chandra transform $h(t;\tau)$ equal to
\begin{equation}
	h(t;\tau) = \cosh(\alpha t) \cdot 2 \sqrt{|\tau|} K_{it}(2 \pi |\tau|)
	\label{eq:h-long-choice}
\end{equation}
with the choice $\alpha = \frac{\pi}{2}- \frac{1}{T} < \frac{\pi}{2}$. The corresponding test function $\Phi_{\infty}$ is given by Theorem \ref{thm:Existence-test-function}. As we will consider the limit as $T \to \infty$, we shall assume that $\alpha \ge \frac{\pi}{4}$.

\subsection{The main proposition}

The remaining sections are dedicated to proving the following proposition.

\begin{prop} \label{prop:long-window-red-counting}
	For $g_1, g_2 \in \SL{2}(\mathbb{R})$ with $H(g_1), H(g_2) \ll (d_BNT)^A$ for some $A>0$ if $B$ is split and $T \ge 3$, there is a $g \in \{g_1, g_2\}$, $\ell \div d_BN$, $L \prec \frac{(d_BNT)^{\frac{1}{2}}}{\ell}$, $T^{-2} \ll \delta \le 1$, and $\delta^{\frac{1}{2}} T^{-1} \ll \heartsuit \ll \delta$ such that
	$$
	\frac{1}{d_BN}\sum_{|t_{\varphi}| \le T} \left(|\varphi(g_1 i)|^2-|\varphi(g_2 i)|^2  \right)^2 \precsim_A \frac{1}{\ell L^2 \heartsuit} \sum_{\substack{\gamma_1,\gamma_2 \in R(\ell;g)\\ \gamma_1,\gamma_2 \in  \Omega^\star(\delta,L) \cup \Psi^\star(\delta,L) \\ |\diamondsuit(\gamma_1)-\diamondsuit(\gamma_2)| \le \heartsuit L^4 \\ \det(\gamma_1)=\det(\gamma_2)}} 1.
	$$
\end{prop}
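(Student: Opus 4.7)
The plan is to apply Proposition \ref{prop:L2-bound} with the test function of \eqref{eq:h-long-choice}, verify that the spectral weight dominates a positive constant on the window $|t_\varphi| \le T$, and then reduce the resulting geometric side to the claimed lattice count via a dyadic decomposition in $L, \delta, \heartsuit$ together with an oscillatory-integral analysis of the $y$-integral. Since $h(t) \equiv 1$ in \eqref{eq:h-long-choice} and $\alpha = \tfrac{\pi}{2} - \tfrac{1}{T}$, one has $\cosh(\alpha t)^2 / \cosh(\pi t) \asymp e^{-2|t|/T}$, which is bounded above and below by absolute constants on $|t| \le T$. Hence $\frac{1}{d_BN}\sum_{|t_\varphi|\le T}(|\varphi(g_1 i)|^2 - |\varphi(g_2 i)|^2)^2$ is dominated by the geometric side of Proposition \ref{prop:L2-bound} up to a $\prec 1$ factor.

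Expanding the square reduces the task to bounding $\frac{1}{\ell}\sum I(\gamma_1, \gamma_2)$ summed over pairs $(\gamma_1, \gamma_2) \in (R(\ell;g)\setminus\{0\})^2$ with $\det\gamma_1 = \det\gamma_2$, where $I(\gamma_1, \gamma_2) = \int_{\sqrt{3}\ell^2/(2d_BN)}^\infty \Phi_\infty(y^{1/2}\gamma_1)\overline{\Phi_\infty(y^{1/2}\gamma_2)}\,dy$. Since $h \equiv 1$, the inverse Fourier transform $g = \delta_0$ is a point mass and \eqref{eq:Q-P-tau-convolution} collapses to the closed form $Q(P;\tau) = \tfrac{1}{2}e^{-2\pi\cos\alpha\cdot P}\cos(2\pi\sin\alpha\sqrt{P^2 - \tau^2})$, giving $\Phi_\infty$ an explicit oscillatory representation via \eqref{eq:M-def-2}. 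The envelope $e^{-2\pi\cos\alpha\cdot yP(\gamma_i)}$ with $\cos\alpha \asymp 1/T$ localises the integrand to $yP(\gamma_i) \prec T$, which combined with the Siegel lower bound $y \gg \ell^2/(d_BN)$ yields $P(\gamma_i) \prec d_BNT/\ell^2$, hence $L \prec (d_BNT)^{1/2}/\ell$ in a dyadic decomposition $P(\gamma_i) \asymp L^2$. I would further decompose by the thinness $\delta$ so that both $\gamma_i$ lie in $\Omega^\star(\delta, L)$ or both in $\Psi^\star(\delta, L)$ (the common choice being forced by matching signs of $\tau(\gamma_i)$), and by the dyadic scale $\heartsuit L^4$ of $|\diamondsuit(\gamma_1) - \diamondsuit(\gamma_2)|$; the lower cut-off $\delta \gg T^{-2}$ arises because for smaller $\delta$ the phase $2\pi\sin\alpha\cdot y\sqrt{\diamondsuit(\gamma_i)}$ does not oscillate on $y \ll T/L^2$, and the contribution is absorbed into the $\delta \asymp T^{-2}$ tier.

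Within a given dyadic box, $\Phi_\infty(y^{1/2}\gamma_1)\overline{\Phi_\infty(y^{1/2}\gamma_2)}$ factors into a flat envelope supported on $y \in (0, T/L^2)$ times the four oscillatory factors $\exp(2\pi i\sin\alpha\cdot y(\epsilon_1\sqrt{\diamondsuit(\gamma_1)} + \epsilon_2\sqrt{\diamondsuit(\gamma_2)}))$ with $\epsilon_j \in \{\pm 1\}$. Repeated integration by parts in $y$ annihilates the branches whose net frequency exceeds $L^2/T$; only the $\epsilon_1 = -\epsilon_2$ branch with $|\sqrt{\diamondsuit(\gamma_1)} - \sqrt{\diamondsuit(\gamma_2)}| = O(L^2/T)$, equivalently $|\diamondsuit(\gamma_1) - \diamondsuit(\gamma_2)| = O(\delta^{1/2} L^4/T)$, contributes meaningfully. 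Tracking the volume normalisations and performing the integration by parts, the surviving contribution is $\prec \frac{1}{d_BN} \cdot \frac{1}{\ell L^2 \heartsuit}$ per pair; the constraint $\heartsuit \gg \delta^{1/2}/T$ is the threshold below which the $y$-oscillation cannot be exploited, while $\heartsuit \ll \delta$ is automatic from $|\diamondsuit(\gamma_i)| \ll \delta L^4$. Summing over the $\prec 1$ admissible dyadic tuples $(\ell, L, \delta, \heartsuit)$ and over the choice of $i \in \{1,2\}$ and of $\Omega$ versus $\Psi$, pigeonhole selects a single tuple whose contribution dominates the fourth moment, yielding the asserted bound.

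The main obstacle is the oscillatory-integral analysis being uniform across the locus $P(\gamma) = |\tau(\gamma)|$ on which \eqref{eq:M-def-2} is only continuous. Lemma \ref{lem:Q-upper-bounds} gives derivative bounds on $Q$ with a mild blow-up $\asymp P^2/(P^2 - \tau^2)$ there, and one must verify that these integrate against the $v^{-1/2}$ singularity in \eqref{eq:M-def-2} to yield derivative estimates on $\Phi_\infty$ as a function of $y$ that are sufficiently flat on each dyadic box for repeated integration by parts to succeed. A secondary difficulty is justifying the interchange of the $y$-integral with the lattice sum, handled by absolute convergence from the exponential envelope together with Corollary \ref{cor:M-infty-bounds} (suitably adjusted for the point-mass case $g=\delta_0$).
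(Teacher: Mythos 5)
Your overall strategy coincides with the paper's: Proposition \ref{prop:L2-bound} applied with the test function \eqref{eq:h-long-choice}, the observation that $\cosh(\alpha t_{\varphi})^2/\cosh(\pi t_{\varphi}) \asymp e^{-2|t_{\varphi}|/T} \gg 1$ on the window $|t_{\varphi}| \le T$, the closed form \eqref{eq:Q-long-window} for $Q$, the exponential envelope giving the truncation $L \prec (d_BNT)^{1/2}/\ell$, dyadic decompositions in $L$, $\delta$, $\heartsuit$ with everything below $\cos(\alpha)^2 \asymp T^{-2}$ collapsed into the bottom $\delta$-tier, and a final pigeonhole over $\prec 1$ dyadic tuples. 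All of that matches.

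The gap is the assertion that $\Phi_{\infty}(y^{1/2}\gamma_1)\overline{\Phi_{\infty}(y^{1/2}\gamma_2)}$ ``factors into a flat envelope times the four oscillatory factors $\exp(2\pi i \sin(\alpha)\, y(\epsilon_1\sqrt{\diamondsuit(\gamma_1)}+\epsilon_2\sqrt{\diamondsuit(\gamma_2)}))$''. After substituting $v = uy$ in \eqref{eq:M-def-2}, the phase of $\Phi_{\infty}(y^{1/2}\gamma_i)$ at the point $u$ of the Abel integral is $2\pi\sin(\alpha)\, y\sqrt{(u+P_i)^2-\tau^2}$, so the $y$-frequency is $\sqrt{u^2+2uP_i+\diamondsuit_i}$; this equals $(1+o(1))\diamondsuit_i^{1/2}$ only in the range $u \ll \diamondsuit_i P_i^{-1}$. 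Elsewhere the frequency is of size $\sqrt{uP_i}$ or $u$, and the difference of the two frequencies no longer controls $|\diamondsuit_1-\diamondsuit_2|$, so integrating by parts in $y$ does not produce the restriction $|\diamondsuit(\gamma_1)-\diamondsuit(\gamma_2)| \le \heartsuit L^4$ with small $\heartsuit$ there. The bulk of the paper's proof is precisely the segmentation required to deal with this: the ranges of types (B1)--(B3) are annihilated by integration by parts in $u$ via \eqref{eq:type-B123}; the types (A), (C), (B4), (B5) are bounded by the size estimates \eqref{eq:type-A}, \eqref{eq:type-C}, \eqref{eq:u-y-int-bound} \emph{without} the $\diamondsuit$-restriction, i.e.\@ they are only matched against the right-hand side with $\heartsuit \asymp \delta$; and only type (B6), where $u \le \diamondsuit_i P_i^{-1}(d_BNT)^{-\eta/2}$, yields a genuinely small $\heartsuit$. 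You also need the decoupling \eqref{eq:decoupling} to separate $y$ from $u_1, u_2$ before the $y$-integral can be pulled inside and integrated by parts, since the cutoffs $\omega(yu_i/V)$ tie the variables together. By contrast, your closing concern about the locus $P(\gamma)=|\tau(\gamma)|$ and the $v^{-1/2}$ singularity is comparatively benign --- Lemma \ref{lem:Q-upper-bounds} and Corollary \ref{cor:M-infty-bounds} dispose of it; the real work you are missing is the frequency segmentation just described.
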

The proof is structured into parts. First, we dissect $\Phi_{\infty}$ into various parts and do a preliminary estimate on them. In a second step, we analyse more precisely how these parts contribute to the $L^2$-norm of the corresponding theta kernel.

\subsection{Preliminary estimates} \label{sec:test-estimate} 

In this section, we give preliminary estimates on $\Phi_{\infty}$ via the integral representation:
\begin{equation}
\Phi_{\infty}(\gamma) = -\frac{\sqrt{2}}{\pi} \int_0^{\infty} \frac{1}{\sqrt{v}} dQ(v+P(\gamma);\tau(\gamma)).
\tag{\ref{eq:M-def-2}}
\end{equation}
Note that in this case, in the definition of $Q$, see \eqref{eq:Q-P-tau-convolution}, $g(\ell)d\ell$ is just the Dirac measure at $\ell=0$, i.e.\@
\begin{equation}
Q(P;\tau) = \tfrac{1}{2} e^{-2\pi \cos(\alpha) P} \cos\left(2\pi \sin(\alpha) \sqrt{P^2-\tau^2}\right).
\label{eq:Q-long-window}
\end{equation}

In what follows, we shall cut the integral from $0$ to $\infty$ into smooth segments as in \eqref{eq:integral-cut} and proceed to bound these individually. There are three types of segments we consider:

\begin{enumerate}[label=(\Alph*)]
	\item \label{item:A} $0 \le v \le  \cos(\alpha)$, 
	\item \label{item:B} $v \asymp V$ with $\cos(\alpha) \le V \le  \cos(\alpha)^{-1}$, 
	\item \label{item:C} $\cos(\alpha)^{-1} \le v < \infty$.
\end{enumerate}





\subsubsection{Type \emph{\ref{item:A}}} \label{sec:type-A} We estimate trivially
\begin{multline}
	\int_0^{\infty} \frac{1}{\sqrt{v}} \omega\left(\frac{v}{\cos(\alpha)}\right) Q_P(v+P,\tau) dv \\
	\prec \int_0^{\cos(\alpha)} \frac{1}{\sqrt{v}} \left( \cos(\alpha)+ \frac{v+P }{1+\sqrt{P^2-\tau^2}}  \right) e^{-2\pi \cos(\alpha) (v+P)} dv \\
	\prec \cos(\alpha)^{\frac{1}{2}} \left( 1+ \min\{P,P\diamondsuit^{-\frac{1}{2}}\} \right) e^{-2\pi \cos(\alpha) P}.
	\label{eq:type-A}
\end{multline}

\subsubsection{Type \emph{\ref{item:B}}} \label{sec:type-B} In this case, we have 
\begin{multline}
	\int_0^{\infty} \frac{1}{\sqrt{v}} \frac{v+P}{\sqrt{(v+P)^2-\tau^2}} \sin(\alpha) \sin\left( 2\pi \sqrt{(v+P)^2-\tau^2} \sin(\alpha) \right) e^{-2\pi \cos(\alpha)(v+P)} \omega\left( \frac{v}{V} \right)dv\\
	+ \int_0^{\infty} \frac{1}{\sqrt{v}} \cos(\alpha) \cos\left(2\pi\sqrt{(v+P)^2-\tau^2}\sin(\alpha)\right)  e^{-2\pi \cos(\alpha)(v+P)} \omega\left( \frac{v}{V} \right)dv .
\label{eq:type-B456}
\end{multline}
for some smooth compactly supported flat functions $\omega$.
The latter integral is bounded by
\begin{equation}
\prec \cos(\alpha)^{\frac{1}{2}} e^{-2 \pi \cos(\alpha) P}.
\label{eq:type-B456-2nd}
\end{equation}
Alternatively, we may integrate by parts and arrive at
$$
\int_0^{\infty} \frac{1}{v^{\frac{3}{2}}} e^{-2\pi \cos(\alpha) (v+P)} \cos( 2\pi \sqrt{(v+P)^2- \tau^2} \sin(\alpha)) \omega\left( \frac{v}{V}\right) dv,
$$
where $\omega$ is some smooth compactly supported flat function. Integrating the oscillatory term by parts $A$-times yields the estimate
\begin{equation}
\prec_A V^{-\frac{1}{2}} e^{-2\pi \cos(\alpha) P} \left(V \cdot \frac{V+P}{V+V^{\frac{1}{2}}P^{\frac{1}{2}}+\diamondsuit^{\frac{1}{2}} }\right)^{-A},
\label{eq:type-B123}
\end{equation}
for any $A \ge 0$, where $\diamondsuit = P^2 -\tau^2 \ge 0$ in agreement with \eqref{eq:P-tau-diam-def}. We note
$$ V \cdot \frac{V+P}{V+V^{\frac{1}{2}}P^{\frac{1}{2}}+\diamondsuit^{\frac{1}{2}}} \asymp
\begin{cases}
	V , & V \ge P,\\
	V^{\frac{1}{2}}P^{\frac{1}{2}}, & \diamondsuit P^{-1} \le V \le P,\\
	V P \diamondsuit^{-\frac{1}{2}} , & V \le \diamondsuit P^{-1}.
\end{cases}
$$
Hence, we subdivide the segments of type \ref{item:B} into further sub-types. Let $\eta$ be another sufficiently and arbitrarily small but positive quantity which may depend on $\epsilon$ to allow for statements such as $(d_BNT)^{\eta} \prec 1$. Then, we distinguish the sub-types:
\begin{enumerate}[label=(B\arabic*)]
	\item \label{item:B1} $V \ge (d_BNT)^{\eta}$, 
	\item \label{item:B2} $V \ge P^{-1}(d_BNT)^{\eta}$ and $\diamondsuit P^{-1} (d_BNT)^{-\frac{\eta}{2}} \le V \le P$, 
	\item \label{item:B3} $V \ge \diamondsuit^{\frac{1}{2}}P^{-1}(d_BNT)^{\eta}$ and $V \le \diamondsuit P^{-1} (d_BNT)^{-\frac{\eta}{2}}$,
	\item \label{item:B4} $P \le V \le (d_BNT)^{\eta}$,
	\item \label{item:B5} $\diamondsuit P^{-1} (d_BNT)^{-\frac{\eta}{2}} \le V \le P$ and $V \le \min\{1,P^{-1}\} (d_BNT)^{\eta}$,
	\item \label{item:B6} $V \le \diamondsuit P^{-1} (d_BNT)^{-\frac{\eta}{2}}$ and $V \le \min\{1,\diamondsuit^{\frac{1}{2}}P^{-1}\}(d_BNT)^{\eta} $.
\end{enumerate}
For the segment types \ref{item:B1}, \ref{item:B2}, and \ref{item:B3}, we are going to use \eqref{eq:type-B123} which will lead to a negligible contribution. For the remaining ones, we shall use \eqref{eq:type-B456} instead.


\subsubsection{Type \emph{\ref{item:C}}} \label{sec:type-C} After integration by parts, we find
\begin{equation}
	\prec \cos(\alpha)^{\frac{1}{2}} e^{-2\pi \cos(\alpha) P} + \int_{\cos(\alpha)^{-1}}^{\infty} \frac{1}{v^{\frac{3}{2}}} e^{- 2\pi \cos(\alpha) (v+P)} dv \prec \cos(\alpha)^{\frac{1}{2}} e^{-2\pi \cos(\alpha) P}. 
	\label{eq:type-C}
\end{equation}

%

\subsection{$L^2$-bound of the theta kernel}
Our particular choice of test function $\Phi_{\infty}$ corresponding to $h(t;\tau)=\cosh(\alpha t) \cdot 2\sqrt{|\tau|}K_{it}(2\pi |\tau|)$ with $\alpha= \frac{\pi}{2}-\frac{1}{T}$ allows us to upper-bound the left-hand side of Proposition \ref{prop:long-window-red-counting} by the $L^2$-norm of the corresponding theta function from Section \ref{sec:theta}. In particular, Proposition \ref{prop:L2-bound} tells us that the left-hand side of Proposition \ref{prop:long-window-red-counting} is bounded by
\begin{equation}
	\prec \sum_{ \ell \div d_BN} \frac{1}{\ell} \int_{\frac{\sqrt{3}}{2} \frac{\ell^2}{d_BN}} \sum_{n \in \frac{1}{\ell} \mathbb{Z}} \left| \sum_{\substack{0 \neq \gamma \in R(\ell;g)\\ \det(\gamma)=n}} \Phi_{\infty}(y^{\frac{1}{2}}\gamma)   \right|^2  dy,
	\label{eq:L2-to-counting}
\end{equation}
which we continue to analyse. In a first step, we split $\Phi_{\infty}$ using Cauchy--Schwarz according to the segment types in \S\ref{sec:test-estimate}. This contributes a factor which is absorbed by the $\prec$ notation.

\subsubsection{Types \ref{item:A} and \ref{item:C}}
\label{sec:typeAC}
Let 
$$
Q(s,x) = \frac{1}{\Gamma(s)} \int_x^{\infty} t^s e^{-t} \frac{dt}{t}
$$
denote the normalised incomplete Gamma function. We open up the square in \eqref{eq:L2-to-counting} and pull the $y$-integral all the way in. We further employ the bounds \eqref{eq:type-A}, respectively \eqref{eq:type-C}, for the contributions of type \ref{item:A}, respectively \ref{item:C}, to $\Phi_{\infty}$. We thus find that the $y$-integral is of the shape
\begin{equation*}
\int_{Y}^{\infty} \cos(\alpha) \prod_{i=1,2}\min\left\{1+yP(\gamma_i),P(\gamma_i)\diamondsuit(\gamma_i)^{-\frac{1}{2}}\right\} e^{-2 \pi \cos(\alpha)y (P(\gamma_1)+P(\gamma_2))} dy,
\end{equation*}
which we further bound by the minimum of 
\begin{equation*}
	\sum_{i=1}^3 \frac{1}{\cos(\alpha)^{i-1}} \frac{1}{P(\gamma_1)+P(\gamma_2)} Q(i, 2 \pi \cos(\alpha)(P(\gamma_1)+P(\gamma_2))Y)
\end{equation*}
and
\begin{equation*}
	\frac{P(\gamma_1)\diamondsuit(\gamma_2)^{-\frac{1}{2}}P(\gamma_2)\diamondsuit(\gamma_2)^{-\frac{1}{2}}}{P(\gamma_1)+P(\gamma_2)} Q(1,2 \pi \cos(\alpha)(P(\gamma_1)+P(\gamma_2))Y ).
\end{equation*}
We note that if $P(\gamma) \asymp L^2$ and $\diamondsuit(\gamma) \ll \delta L^4$, then (up to some not relevant constants) we have $\gamma \in \Omega^{\star}(\delta,L) \cup \Psi^{\star}(\delta,L)$. We shall now dyadically partition $\sqrt{\max\{P(\gamma_1),P(\gamma_2)\}}$, which we denote by $L_j$, and $\diamondsuit(\gamma_i) P(\gamma_i)^{-2}$, which we denote by $\delta_a$. We let $\delta_a$ only run dyadically through the interval $[\cos(\alpha)^2,1]$. The $\gamma_i$'s with $\diamondsuit(\gamma_i) P(\gamma_i)^{-2} \le \cos(\alpha)^2$ shall all be collected into single set which is associated to $\delta_a \asymp \cos(\alpha)^2$. An application of Cauchy--Schwarz in the summation over $\delta_a$ and the super-polynomial decay of $Q(s,x)$ as soon as $x \gg s$ then shows that the contributions of type \ref{item:A} or \ref{item:C} to \eqref{eq:L2-to-counting} are bounded by
\begin{multline}
	\prec_A \sum_{\ell \div d_BN} \frac{1}{\ell} \sum_j \sum_a \frac{1}{L_j^2} \left(1+\frac{\ell^2}{d_BN} L_j^2 \cos(\alpha)  \right)^{-A} \\
	\times \min\left\{\delta_a^{-1},\cos(\alpha)^{-2}\right\}   \sum_{\substack{\gamma_1,\gamma_2 \in R(\ell;g)\\ \gamma_1,\gamma_2 \in \Omega^\star(\delta_a,L_j)\cup \Psi^\star(\delta_a,L_j)\\ \det(\gamma_1)=\det(\gamma_2)}} 1,
	\label{eq:AD-superpoly}
\end{multline}
for any $A \ge 0$, where $L_j$ runs dyadically through $]0,\infty[$ and $\delta_{a}$ runs dyadically through $[\cos(\alpha)^2,1]$. Note that the factor $|\log(\cos(\alpha))| \ll_{\epsilon} T^{\epsilon}$ from Cauchy--Schwarz has been absorbed by the $\prec$ notation. The super-polynomial decay and the polynomial bound on the counting from Propositions \ref{prop:Omega-lattice-counting} and \ref{prop:Psi-lattice-counting} allows us to truncate the sum at $L \prec \frac{(d_BNT)^{\frac{1}{2}}}{\ell}$ with an inconsequential error. Likewise, for $L_j \ll \min\{\ell^{-\frac{1}{2}}, \ell^{-1} H(g)^{-1} \}$ the inner most sum is empty, see Proposition \ref{prop:Omega-lattice-counting} and \ref{prop:Psi-lattice-counting}. Thus, the sum over $L_j$ may be truncated from the bottom, leaving $\prec 1$ summands. At this point, we may take the maximum and find that the bound is less or equal to the claimed upper bound in Proposition \ref{prop:long-window-red-counting} (take $\heartsuit \asymp \delta$).

\subsubsection{Types \ref{item:B1}-\ref{item:B3}} \label{sec:type-B123}

Here, we make use of \eqref{eq:type-B123}. After inserting this bound into the contribution of these segment types to \eqref{eq:L2-to-counting}, we forfeit any restriction imposed on $y$ and the $\gamma$'s imposed by \ref{item:B1}-\ref{item:B3}. This leads to the upper bound
$$
\prec_A (d_BNT)^{-A} \sum_{ \ell \div d_BN} \frac{1}{\ell} \int_{\frac{\sqrt{3}}{2} \frac{\ell^2}{d_BN}} \sum_{n \in \frac{1}{\ell} \mathbb{Z}}  \sum_{\substack{0 \neq \gamma_1,\gamma_2 \in R(\ell;g)\\ \det(\gamma)=n}} V^{-1} e^{-2\pi \cos(\alpha) (P(\gamma_1)+P(\gamma_2))}  dy.
$$
This may be bounded analogous to the previous section. The resulting bound is:
\begin{multline}
	\prec_A (d_BNT)^{-A} \sum_{\ell \div d_BN} \frac{1}{\ell} \sum_j \frac{1}{L_j^2} \left(1+\frac{\ell^2}{d_BN} L_j^2 \cos(\alpha)  \right)^{-A} \\
	 \times \frac{1}{\cos(\alpha)V} \sum_{\substack{\gamma_1,\gamma_2 \in R(\ell;g) \cap \Omega^\star(1,L_j)\\ \det(\gamma_1)=\det(\gamma_2)}} 1,
	\label{eq:C-superpoly}
\end{multline}
where $V \ge \cos(\alpha) \gg \frac{1}{T}$. Hence, by Proposition \ref{prop:Omega-lattice-counting}, this whole expression is inconsequential.

%

\subsubsection{Types \ref{item:B4}-\ref{item:B6}}

These are the most difficult cases which, in particular, require a more stringent type of counting that needn't be and hasn't been considered in the prequels. We note that the second integral in \eqref{eq:type-B456} is also bounded by $\cos(\alpha)^{\frac{1}{2}} e^{-2\pi \cos(\alpha) P}$ and thus satisfies the same bound as type \ref{item:C}. Hence, we turn our attention to the first integral of \eqref{eq:type-B456}. In a first step, we make the substitution $v=uy$.

We shall now consider the contribution to \eqref{eq:L2-to-counting}. We open up the square once more and abbreviate $P(\gamma_i)$ by $P_i$, $\diamondsuit(\gamma_i)$ by $\diamondsuit_i$, and $\tau=\tau_i=\tau(\gamma_i)$ for $i=1,2$. The total contribution of these types are thus given by
\begin{multline}
 \sum_{\ell \div d_BN} \frac{1}{\ell} \sum_{n \in \frac{1}{\ell} \mathbb{Z}} \sum_{\substack{0 \neq \gamma_1, \gamma_2 \in R(\ell;g) \\ \det(\gamma_1)=\det(\gamma_2)=n}} \int_{\frac{\ell^2}{2d_BN}}^{\infty} y^2 \prod_{i=1}^2 \Biggl( y^{\frac{1}{2}} \int_0^{\infty} \frac{1}{\sqrt{u}} \frac{u+P_i}{\sqrt{(u+P_i)^2-\tau^2}} \\ \times \sin\left( 2\pi y \sqrt{(u+P_i)^2-\tau^2} \sin(\alpha) \right) e^{-2\pi \cos(\alpha) y (u+P_i)} \omega\left(\frac{yu}{V}\right) du \Biggr)  \frac{dy}{y^2},
\label{eq:TypeBwhole}
\end{multline}
subject to the additional restrictions imposed by \ref{item:B4}-\ref{item:B6}, which now read
\begin{enumerate}[label=(B\arabic*${}^{\ast}$)] \setcounter{enumi}{3}
	\item \label{item:B4*} $yP_i \le V \le (d_BNT)^{\eta}$,
	\item \label{item:B5*} $y\diamondsuit_i P_i^{-1} (d_BNT)^{-\frac{\eta}{2}} \le V \le yP_i$ and $V \le \min\{1,y^{-1}P_i^{-1}\} (d_BNT)^{\eta}$,
	\item \label{item:B6*} $V \le y\diamondsuit_i P_i^{-1} (d_BNT)^{-\frac{\eta}{2}}$ and $V \le \min\{1,\diamondsuit^{\frac{1}{2}}_iP_i^{-1}\}(d_BNT)^{\eta} $.
\end{enumerate}
First, we shall deal with the tail of the $y$-integral. For this, we bound the $u$-integrals trivially:
\begin{multline*}
\int_0^{\infty} \frac{1}{\sqrt{u}} \frac{u+P_i}{\sqrt{(u+P_i)^2-\tau^2}} \sin\left( 2 \pi y \sqrt{(u+P_i)^2 - \tau^2} \sin(\alpha) \right) e^{-2\pi \cos(\alpha)y (u+P_i)} \omega\left(\frac{yu}{V}\right) du \\
\prec \left(\frac{V^{1/2}}{y^{1/2}} + P_i^{1/2} \right) e^{-2 \pi \cos(\alpha) y P_i} \prec \left(y^{-1/2} + P_i^{1/2} \right) e^{-2 \pi \cos(\alpha) y P_i},
\end{multline*}
where we recall that $V \prec 1$ from \ref{item:B4*}-\ref{item:B6*}. Hence, the contribution from $y \ge Y$ to the $y$-integral is bounded by
\begin{multline*}
	\int_Y^{\infty} y \left( \frac{1}{y}+P_1+P_2 \right) e^{-2\pi \cos(\alpha) y (P_1+P_2) } dy \\
	\prec \frac{1}{\cos(\alpha)(P_1+P_2)} Q(1, 2\pi Y \cos(\alpha) (P_1+P_2)) \\
	+ \frac{1}{\cos(\alpha)^2(P_1+P_2)} Q(2, 2\pi Y \cos(\alpha)(P_1+P_2)).
\end{multline*}
By dyadically partitioning $\sqrt{\max\{P_1,P_2\}}$, which we denote by $L_j$, and using the super-polynomial decay of the incomplete Gamma function as in the previous cases, we see that contribution from the tail $y \ge Y$ is bounded by
$$
 \sum_{\ell \div d_BN} \frac{1}{\ell} \sum_j \frac{1}{\cos(\alpha)^2 L_j^2} \left(1+YL_j^2 \cos(\alpha) \right)^{-A} \sum_{\substack{\gamma_1,\gamma_2 \in R(\ell;g) \cap \Omega^\star(1,L_j)\\ \det(\gamma_1)=\det(\gamma_2)}} 1.
$$
We note that for $0 \neq \gamma \in R(\ell;g)$, we have $P(\gamma) \gg \min\{\ell^{-1}, \ell^{-2}H(g)^{-2}\}$, see Proposition \ref{prop:Omega-lattice-counting}. Hence, if we let $Y \ge T \max\{\ell, \ell^2 H(g)^2\} (d_BNT)^{\eta}$ this will turn out to be inconsequential after once more referring to Proposition \ref{prop:Omega-lattice-counting}.

The remaining $y$-integral we partition into smooth dyadic intervals $y \asymp Y$ with
\begin{equation} \label{eq:Ydelimitation}
\frac{\ell^2}{d_BN} \ll Y \le T  \max\{\ell, \ell^2  H(g)^2 \} (d_BNT)^{\eta}.
\end{equation}
By the discussion in Section \ref{sec:asymptotic-notation}, we may decouple the variables and replace
$$
\omega\left(\frac{y}{Y}\right)\omega\left(\frac{yu_1}{V}\right)\omega\left(\frac{yu_2}{V}\right) \approx \omega\left(\frac{y}{Y}\right) \omega\left(\frac{u_1}{U}\right) \omega\left(\frac{u_2}{U}\right), \quad \text{where } U=V/Y
$$
and every instance of the flat function $\omega$ may differ, up to a inconsequential error (and an omitted integral). This (inconsequential) error then contributes at most
\begin{equation}
	\prec_A (d_BNT)^{-A}  \sum_{\ell \div d_BN} \sum_j Y^2(Y^{-1}+ L_j^2) (1+\cos(\alpha)YL_j^2)^{-A} \sum_{\substack{\gamma_1,\gamma_2 \in R(\ell;g) \cap \Omega^\star(1,L_j)\\ \det(\gamma_1)=\det(\gamma_2)}} 1
	\label{eq:TypeB-y-tail-contribution}
\end{equation}
to \eqref{eq:TypeBwhole}. Proposition \ref{prop:Omega-lattice-counting} shows that is inconsequential again.

For the newly decoupled variables, we may pull the $y$-integral to the inside. We are thus led to study
\begin{multline}
\int_0^{\infty} y  \sin\left(2\pi y \sqrt{(u_1+P_1)^2-\tau^2} \sin(\alpha)\right) \sin\left(2\pi y \sqrt{(u_2+P_2)^2-\tau^2} \sin(\alpha)\right) \\ e^{-2\pi y \cos(\alpha) (u_1+P_1+u_2+P_2)} \omega\left(\frac{y}{Y}\right) dy.
\label{eq:TypeB-y-pulled-in}
\end{multline}
Let $u_1,u_2 \asymp U = V/Y$. Assuming
\begin{equation}
Y \left( \frac{1}{T}(2U+P_1+P_2)+\left| \sqrt{(u_1+P_1)^2-\tau^2}- \sqrt{(u_2+P_2)^2-\tau^2}   \right|\right) \ge (d_BNT)^{\eta},
\label{eq:Yfirstder}
\end{equation}
then integration by parts shows that \eqref{eq:TypeB-y-pulled-in} is $\prec_A (d_BNT)^{-A}Y^2$, for any $A \ge 0$. This leads again to a contribution of \eqref{eq:TypeB-y-tail-contribution} to \eqref{eq:TypeBwhole}, which is inconsequential. Thus, we assume from now on that
\begin{equation}
	Y \left( \frac{1}{T}(2U+P_1+P_2)+\left| \sqrt{(u_1+P_1)^2-\tau^2}- \sqrt{(u_2+P_2)^2-\tau^2}   \right|\right) \le (d_BNT)^{\eta}
	\label{eq:Yrestriction}
\end{equation}
holds. We also record the bound
\begin{multline} \label{eq:u-y-int-bound}
	\int_{\frac{\ell^2}{2d_BN}}^{\infty} y^2 \prod_{i=1}^2 \Biggl( y^{\frac{1}{2}} \int_0^{\infty} \frac{1}{\sqrt{u}} \frac{u+P_i}{\sqrt{(u+P_i)^2-\tau^2}} \\ \times \sin\left( 2\pi y \sqrt{(u+P_i)^2-\tau^2} \sin(\alpha) \right)  e^{-2\pi \cos(\alpha) y (u+P_i)} \omega\left(\frac{u}{U}\right) du \Biggr) \omega\left( \frac{y}{Y} \right)  \frac{dy}{y^2} \\
	 \ll Y^2 U \prod_{i=1}^2 \frac{U+P_i}{U+U^{\frac{1}{2}}P_i^{\frac{1}{2}}+\diamondsuit_i^{\frac{1}{2}}}.
\end{multline}

\paragraph{Type \ref{item:B4*}: $YP_i \ll YU \prec 1$} Here, we know that $\frac{1}{T} \asymp \cos(\alpha) \ll V = YU$ from \ref{item:B} and $Y(P_1+P_2) \prec T$ from \eqref{eq:Yrestriction}. We dyadically partition $\sqrt{\max\{P_1,P_2\}}$, which we denote by $L_j$. From the estimate \eqref{eq:u-y-int-bound}, it follows that the contribution of this type to \eqref{eq:TypeBwhole} is
$$
\prec \sum_{\ell \div d_BN} \frac{1}{\ell} \sum_j Y^2 U \sum_{\substack{\gamma_1,\gamma_2 \in R(\ell;g) \cap \Omega^\star(1,L_j) \\ \det(\gamma_1)=\det(\gamma_2)}} 1
$$
summed over all dyadic ranges \eqref{eq:Ydelimitation}. Since $L^2_j \ll P_1+P_2 \prec U$, we have $Y^2U \prec L^{-2}$. Hence, this last expression is less or equal to the claimed upper bound in Proposition \ref{prop:long-window-red-counting}.


\paragraph{Type \ref{item:B5*}: $\diamondsuit_i P_i^{-1}  \prec U \ll P_i$ and $YU \prec \min\{1,Y^{-1}(P_1+P_2)^{-1}\}$} Here, we know $\frac{1}{T} \asymp \cos(\alpha) \ll YU$ from \ref{item:B} as well as $Y(P_1+P_2) \prec T$ from \eqref{eq:Yrestriction}. We dyadically partition $\sqrt{\max\{P_1,P_2\}}$, which we denote by $L_j$. Similarly, we dyadically partition $\diamondsuit_i/P_i^2$, which we denote by $\delta_a$, and apply Cauchy--Schwarz as we did for the types \ref{item:A} and \ref{item:C}. From the estimate \eqref{eq:u-y-int-bound}, it follows that the contribution of this type to \eqref{eq:TypeBwhole} is
$$
\prec \sum_{\ell \div d_BN} \frac{1}{\ell} \sum_j \sum_a Y^2 L_j^2 \sum_{\substack{\gamma_1,\gamma_2 \in R(\ell;g)\\ \gamma_1,\gamma_2 \in  \Omega^\star(\delta_a,L_j) \cup \Psi^\star(\delta_a,L_j) \\ \det(\gamma_1)=\det(\gamma_2)}} 1
$$
summed over all dyadic ranges \eqref{eq:Ydelimitation}. We have $\delta_aL_j^2 \prec U$ and $Y^2UL_j^2 \prec 1$. We also have $Y^2L_j^2 \prec T^2L_j^{-2}$. Hence, $Y^2L_j^2 \prec L_j^{-2}\min\{\delta_a^{-1},T^2\}$. Thus, this last expression is less or equal to the claimed upper bound in Proposition \ref{prop:long-window-red-counting}.


\paragraph{Type \ref{item:B6*}: $U \le  \diamondsuit_i P_i^{-1} (d_BNT)^{-\frac{\eta}{2}}$ and $YU \prec \min\{1, \diamondsuit_i^{\frac{1}{2}}P_i^{-1}\}$} Here, we have
$$
\sqrt{(u+P_i)^2-\tau^2} = (1+o(1)) \diamondsuit_i^{\frac{1}{2}}.
$$
From \ref{item:B}, we know $\frac{1}{T} \asymp \cos(\alpha) \ll YU$ and, from \eqref{eq:Yrestriction}, we know $YU \prec T$, $YP_i \prec T$, and $Y|\diamondsuit_1^{\frac{1}{2}}-\diamondsuit_2^{\frac{1}{2}}| \prec 1$. The latter we may rewrite as
$$
Y(P_1+P_2) \cdot \frac{|\diamondsuit_1-\diamondsuit_2|}{(P_1+P_2)^2} \cdot \left( \frac{\diamondsuit_1^{\frac{1}{2}}+\diamondsuit_2^{\frac{1}{2}}}{P_1+P_2} \right)^{-1} = Y|\diamondsuit_1^{\frac{1}{2}}-\diamondsuit_2^{\frac{1}{2}}| \prec 1.
$$
We dyadically partition $\sqrt{\max\{P_1,P_2\}}$ by $L_j$. Similarly, we dyadically partition $\diamondsuit_i/P_i^2$ by $\delta_a$ and apply Cauchy--Schwarz as we did for the types \ref{item:A} and \ref{item:C}. Lastly, we dyadically partition $|\diamondsuit_1-\diamondsuit_2|/(P_1+P_2)^2$ by $\heartsuit_b$. From the estimate \eqref{eq:u-y-int-bound}, it follows that the contribution of this type to \eqref{eq:TypeBwhole} is
$$
\prec \sum_{\ell \div d_BN} \frac{1}{\ell} \sum_j \sum_a \sum_b Y^2 \left( U^{\frac{1}{2}} \delta_a^{-\frac{1}{2}}   \right)^2  \sum_{\substack{\gamma_1,\gamma_2 \in R(\ell;g)\\ \gamma_1,\gamma_2 \in  \Omega^\star(\delta_a,L_j) \cup \Psi^\star(\delta_a,L_j) \\ |\diamondsuit_1-\diamondsuit_2| \le \heartsuit_b L_j^4 \\ \det(\gamma_1)=\det(\gamma_2)}} 1.
$$
summed over all dyadic ranges \eqref{eq:Ydelimitation}. We have
$$\begin{aligned}
 Y^2U\delta_a^{-1} & \ll Y^2 (\delta_a L_j^2) \delta_a^{-1} = L_j^{-2} (YL_j^2)^2 \prec L_j^{-2} T^2, \\
 Y^2U\delta_a^{-1} & = L_j^{-2}(YU\delta_a^{-\frac{1}{2}}) \delta_a^{-\frac{1}{2}} (YL_j^2)  \prec  L_j^{-2} \delta_a^{-\frac{1}{2}} T, \\
 Y^2U\delta_a^{-1} & = L_j^{-2} (Y L_j^2 \delta_a^{-\frac{1}{2}}) (YU\delta_a^{-\frac{1}{2}}) \prec L_j^{-2} \heartsuit_b^{-1} .
\end{aligned}$$
Thus, this last expression is less or equal to the claimed upper bound in Proposition \ref{prop:long-window-red-counting}.

\subsection{Proof of main theorems} \label{sec:proof-main-thms} Theorems \ref{thm:fourth-hybrid unconditional}-\ref{thm:Eichler-order-main-thm} follow immediately from Proposition \ref{prop:long-window-red-counting} and the counting propositions and conjecture: Proposistions \ref{prop:Omega-lattice-counting}, \ref{prop:Psi-lattice-counting}, and Conjecture \ref{conj:heart-lattice-counting}. For the individual bound in Theorem \ref{thm:Eichler-order-main-thm}, we not that we may find a point $w \in \Gamma \backslash \mathbb{H}$ such that $|\varphi(w)| \le 1$ as $\|\varphi\|_2=1$ and the measure is a probability measure. It thus remains to prove Theorem \ref{thm:hybrid-indiv}. We require the following lemma.

\begin{lem}[{\cite[Prop.\@ 3.1]{Thybrid}}] \label{lem:split-y-large} 
	For an $L^2$-normalised Hecke--Maa{\ss} form $\varphi$ on $\Gamma_0(N) \backslash \mathbb{H}$ and a point $z \in \mathbb{H}$, we have
	$$
	|\varphi(z)| \ll_{\epsilon} (N (1+|\lambda_{\varphi}|))^{\epsilon}  \left( \frac{(1+|\lambda_{\varphi}|)^{\frac{1}{4}}}{H(z)^{\frac{1}{2}}}+ (1+|\lambda_{\varphi}|)^{\frac{1}{12}} \right).
	$$
\end{lem}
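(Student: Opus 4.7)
The plan is to prove Lemma \ref{lem:split-y-large} via a case split on the height $H(z)$, combining the Fourier expansion of $\varphi$ at a cusp with the amplified pre-trace method.

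I would first reduce to the case $\Im(z) = y = H(z)$: since $\varphi$ is a newform it is an Atkin--Lehner eigenform with sign eigenvalues, so $|\varphi|$ is $A_0(N)$-invariant, and I may replace $z$ by the representative in its orbit of maximal imaginary part. To prove the bound $(1+|\lambda_\varphi|)^{1/4}/H(z)^{1/2}$, I would then expand $\varphi$ in a Fourier series at the cusp $\infty$:
$$\varphi(x+iy) = \sum_{n \neq 0} \rho_\varphi(n)\sqrt{y}\,K_{it_\varphi}(2\pi|n|y)\,e(nx),$$
use the rapid decay of $K_{it_\varphi}(u)$ for $u > (1+|t_\varphi|)^{1+\epsilon}$ to truncate to $|n| \le (1+|t_\varphi|)^{1+\epsilon}/(2\pi y)$, and apply Cauchy--Schwarz together with the Rankin--Selberg estimate $\sum_{|n|\le X}|\rho_\varphi(n)|^2 \ll_\epsilon (NX(1+|t_\varphi|))^\epsilon\, X/\cosh(\pi t_\varphi)$ (using $\|\varphi\|_2 = 1$) and an average $L^2$-estimate on $K_{it_\varphi}$ supplying the compensating factor $\cosh(\pi t_\varphi)$; this gives the first bound up to $N^\epsilon$.

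For the second bound $(1+|\lambda_\varphi|)^{1/12}$, I would invoke an amplified pre-trace formula: with a spherical kernel $h$ localised at the archimedean parameter $t_\varphi$, an amplifier $A$ built from Hecke eigenvalues at primes $p \asymp L$, and positivity, one reduces to counting matrices in $R(\ell;g)$ satisfying simultaneous determinant and trace constraints in a bounded archimedean ball. The geometric side is then bounded via lattice-point counts of the same flavour as Propositions \ref{prop:Omega-lattice-counting}--\ref{prop:Psi-lattice-counting}, and optimising $L$ in terms of $(1+|\lambda_\varphi|)$ yields the claimed subconvex exponent in the eigenvalue aspect. Adding the two estimates completes the proof, since the minimum of two positive quantities is at most their sum.

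The hard part will be the amplification step, in particular obtaining lattice-point counts that are uniform in $z$ (requiring the Harcos--Templier-type bookkeeping that explicitly keeps track of the Atkin--Lehner translates and of the $(c,N)$ spacing from Lemma \ref{lem:spacing}); the Fourier-expansion step is essentially routine.
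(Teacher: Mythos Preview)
The paper gives no proof of this lemma; it simply quotes \cite[Prop.\@ 3.1]{Thybrid}. Your plan for the first term is correct and matches the standard argument behind Templier's result. However, there is a genuine gap in your approach to the second term.

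The bound $(1+|\lambda_\varphi|)^{1/12}$, i.e.\ $(1+|t_\varphi|)^{1/6}$, does \emph{not} come from amplification. The amplified pre-trace formula of Iwaniec--Sarnak yields the exponent $(1+|\lambda_\varphi|)^{5/24}$, which is strictly weaker than $(1+|\lambda_\varphi|)^{1/12}$; so amplification cannot produce the second term as stated. The exponent $1/6$ in the spectral parameter is instead the hallmark of the Airy/transitional regime of the $K$-Bessel function: one stays with the Fourier expansion at the cusp and splits the sum according to whether $2\pi|n|y$ lies in the oscillatory range $x\ll |t_\varphi|$, the transitional range $|x-|t_\varphi||\ll |t_\varphi|^{1/3}$, or the exponentially decaying range $x\gg |t_\varphi|$. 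The oscillatory range, handled by Cauchy--Schwarz and Rankin--Selberg as you describe, gives $(1+|\lambda_\varphi|)^{1/4}H(z)^{-1/2}$; the transitional range, using the pointwise bound $|K_{it}(x)|\ll |t|^{-1/3}e^{-\pi|t|/2}$ there and the short length $\asymp |t_\varphi|^{1/3}/y$ of the window, produces the $(1+|t_\varphi|)^{1/6}$ term. Both terms in the lemma thus come from the same Whittaker expansion, and no pre-trace formula or lattice counting is needed.
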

Thus, Theorem \ref{thm:hybrid-indiv} follows from Lemma \ref{lem:split-y-large} if $H(z) \ge (1+|\lambda_{\varphi}|)^{\frac{1}{4}} N^{-\frac{1}{2}}$. Assume the contrary and take $w \in \mathbb{H}$ arbitrarily such that $H(w) = (1+|\lambda_{\varphi}|)^{\frac{1}{4}} N^{-\frac{1}{2}}$ and thus,
$$
|\varphi(w)| \ll_{\epsilon} N^{\frac{1}{4}+\epsilon} (1+|\lambda_{\varphi}|)^{\frac{3}{8}+\epsilon}.
$$
The conclusion of Theorem \ref{thm:hybrid-indiv} thus follows from Theorem \ref{thm:fourth-hybrid unconditional} and the triangle inequality.

\appendix
\section{Bounds for special functions}
\label{app:bounds-special}

In this section, we collect some bounds for special functions that are being used for the absolut convergence of certain integrals.

\begin{lem} \label{lem:h(t,tau)-bounds} We have for $t \in \mathbb{R}$ and $x > 0$ that
	\begin{align*}
		\frac{\partial^j}{\partial x^j} K_{it}(x) & \ll (1+|\log(x)|) \left(\tfrac{1+|t|}{x}\right)^j e^{-\frac{\pi}{2}|t|}, \quad j=0,1,2.
	\end{align*}
	Thus, for $h(t;\tau)$ as in Theorem \ref{thm:Existence-test-function} with $t,\tau \in \mathbb{R}$ and $\tau \neq 0$, we have that 
	\begin{align*}
		\frac{\partial^j}{\partial \tau^j}  h(t;\tau) & \ll 
		(1+|\log(|\tau|)|) |\tau|^{\frac{1}{2}} \left(\tfrac{1+|t|}{|\tau|}\right)^j e^{-(\frac{\pi}{2}-\alpha)|t|}  , \quad j=0,1,2.
	\end{align*}
\end{lem}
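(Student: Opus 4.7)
The plan is to prove the $K$-Bessel estimates first, from which the bounds on $h(t;\tau)$ follow by the product and chain rules. Both parts are essentially classical; the lemma is a uniform restatement tailored to the needs of Section~\ref{sec:arch-test-func}.

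\textbf{Step 1: Bessel bounds.} For $j=0$, I would split into regimes by the size of $|t|$. When $|t|$ is bounded, the trivial estimate from the integral representation
\begin{equation*}
|K_{it}(x)|\;\le\;\int_0^{\infty}e^{-x\cosh u}\,du\;=\;K_0(x)\;\ll\;1+|\log x|
\end{equation*}
suffices, since then $e^{-\pi|t|/2}\asymp 1$. When $|t|$ is large, I would instead use the Mellin--Barnes representation
\begin{equation*}
K_{it}(x)\;=\;\frac{1}{4\pi i}\int_{(\sigma)}\Gamma\!\left(\tfrac{s+it}{2}\right)\Gamma\!\left(\tfrac{s-it}{2}\right)\!\left(\tfrac{x}{2}\right)^{-s}ds,\qquad \sigma>0,
\end{equation*}
and Stirling's formula, which extracts the factor $e^{-\pi|t|/2}$ uniformly in $x$ from the Gamma functions on a vertical contour. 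Gluing the two regimes yields the stated $j=0$ bound.

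For $j=1,2$, I would apply the Bessel recurrence $K_{\nu}'(x)=(\nu/x)K_{\nu}(x)-K_{\nu+1}(x)$ with $\nu=it$, and bound $K_{it+1}(x)$ by the same Mellin--Barnes technique (now with the Gamma factors shifted). A single application gives
\begin{equation*}
|K_{it}'(x)|\;\le\;\tfrac{|t|}{x}|K_{it}(x)|+|K_{it+1}(x)|,
\end{equation*}
and the shifted Mellin--Barnes estimate $|K_{it+1}(x)|\ll (1+|t|)x^{-1}(1+|\log x|)e^{-\pi|t|/2}$ combines with the $j=0$ bound to produce the desired net factor of $(1+|t|)/x$; iterating once more handles $j=2$.

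\textbf{Step 2: Bounds on $h(t;\tau)$.} By the construction in Theorem~\ref{thm:Existence-test-function}, $h(t;\tau)=h(t)\cosh(\alpha t)\cdot 2\sqrt{|\tau|}\,K_{it}(2\pi|\tau|)$, with $|h(t)|\ll 1$ by hypothesis~(\ref{enum:thm3-b}) (trivially in case~(\ref{enum:thm3-type1})) and $|\cosh(\alpha t)|\ll e^{\alpha|t|}$. For $j=0$, inserting the Step~1 bound at $x=2\pi|\tau|$ yields the claim, the three exponentials combining to $e^{-(\pi/2-\alpha)|t|}$ and $1+|\log(2\pi|\tau|)|\asymp 1+|\log|\tau||$. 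For $j=1,2$, I would expand via Leibniz and the chain rule: each $\tau$-derivative either hits $\sqrt{|\tau|}$ (producing $1/|\tau|$, subsumed by $(1+|t|)/|\tau|$) or $K_{it}(2\pi|\tau|)$ (producing a chain-rule factor $2\pi$ and invoking the Step~1 derivative bound, which supplies $(1+|t|)/|\tau|$).

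\textbf{Main obstacle.} The technical heart is Step~1, where one must capture, uniformly in $x>0$ and $t\in\mathbb{R}$, the interplay between the logarithmic factor $1+|\log x|$ (which arises only in the regime $x\ll 1$ with $|t|$ bounded) and the exponential factor $e^{-\pi|t|/2}$ (sharp for $|t|$ large). The Mellin--Barnes approach handles both via contour shifting, but care is required to keep track of the residue contributions from the poles of $\Gamma((s\pm it)/2)$ at $s=\mp it$ when shifting through $\sigma=0$, since these furnish the dominant term of size $e^{-\pi|t|/2}$ in the large-$|t|$, small-$x$ corner.
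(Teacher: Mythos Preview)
Your approach is correct but takes a genuinely different route from the paper. The paper decomposes into three ranges of the pair $(x,|t|)$---namely $|t|\le 2(1+x)$, $|t|\ge 2x\ge 2$, and $x\le 1$ with $|t|\ge 2$---and in each uses a different representation (a Gegenbauer-type integral $K_{it}(x)=\tfrac{\Gamma(1/2+it)(2x)^{it}}{\sqrt{\pi}}\int_0^\infty\frac{\cos y}{(y^2+x^2)^{1/2+it}}\,dy$, an external reference, and the power series, respectively), obtaining the derivative bounds by differentiating the representation in each range. Your proposal instead splits only by the size of $|t|$ and uses the Mellin--Barnes integral together with the recurrence $K_\nu'=(\nu/x)K_\nu-K_{\nu+1}$ to reduce the derivative bounds to bounds on $K_{it+1}$ and $K_{it+2}$. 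This recurrence trick is clean and avoids differentiating under integrals with care at the endpoint; the price is that you must redo the Mellin--Barnes analysis for the shifted indices, and---as you partly acknowledge in your ``main obstacle'' paragraph---the choice of contour $\sigma$ must depend on whether $x\lesssim|t|$ (shift to $\sigma\le 0$, pick up the residues at $s=\pm it$, respectively $s=1+it$, which furnish the leading terms) or $x\gtrsim|t|$ (keep $\sigma$ large to get $(|t|/x)^\sigma$ decay). The paper's range $|t|\le 2(1+x)$ handles the latter region directly via the Gegenbauer integral, sidestepping this contour-switching. Both approaches yield the same bounds; yours is arguably more systematic but requires the two-regime contour argument to be spelled out for each shifted index.
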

\begin{proof} The bounds on $h(t; \tau)$ and its derivatives will follow from the bounds on the $K$-Bessel function and the assumptions on $h(t)$ in Theorem \ref{thm:Existence-test-function}. The bound on $K_{it}(x)$ is recorded in \cite[Eq.\@ (3.58)]{thesis}. We apply the same strategy here and split into several ranges of the parameters.	
	\paragraph{Range $|t| \le 2(1+x)$} 	
	The $K$-Bessel function may be represented by the integral representation (see \cite[\S 6.16]{ToBF})
	\begin{equation}
		K_{it}(x) = \frac{\Gamma(\frac{1}{2}+it) (2x)^{it}}{\sqrt{\pi}} \int_0^{\infty} \frac{\cos(y)}{(y^2+x^2)^{\frac{1}{2}+it}} dy.
		\label{eq:K-Bessel-int-rep}
	\end{equation}
	By integration by parts, we split the integral into
	\begin{multline}
		\int_0^{\infty} \frac{\cos(y)}{(y^2+x^2)^{\frac{1}{2}+it}} dy = \int_0^{A} \frac{\cos(y)}{(y^2+x^2)^{\frac{1}{2}+it}} dy\\
		- \frac{\sin(A)}{(A^2 + x^2)^{\frac{1}{2}+it}} + (1+2it)\int_{A}^{\infty} \frac{x\sin(y)}{(y^2+x^2)^{\frac{3}{2}+it}} dy.
		\label{eq:K-int-split}\end{multline}
	By Lebesgue dominated convergence, we may pull derivatives in $x$ on the right-hand side to the inside and the put the pieces back together to arrive at 
	$$
	\frac{\partial^j}{\partial x^j} \int_0^{\infty} \frac{\cos(y)}{(y^2+x^2)^{\frac{1}{2}+it}} dy = \int_0^{\infty} \frac{\partial^j}{\partial x^j}  \left(\frac{\cos(y)}{(y^2+x^2)^{\frac{1}{2}+it}} \right) dy, \quad j=1,2.
	$$
	By bounding absolutely, we conclude
	$$
	\frac{\partial^j}{\partial x^j} \int_0^{\infty} \frac{\cos(y)}{(y^2+x^2)^{\frac{1}{2}+it}} dy \ll x^{-j} (1+|t|)^j, \quad j=1,2.
	$$
	Similarly, we find from \eqref{eq:K-int-split} with $A=x+1$ that
	$$
	\int_0^{\infty} \frac{\cos(y)}{(y^2+x^2)^{\frac{1}{2}+it}} dy \ll \left(1+ |\log(x)|\right).
	$$
	By inserting these bounds into (derivates of) \eqref{eq:K-Bessel-int-rep}, we find
	$$
	\frac{\partial^j}{\partial x^j} K_{it}(x) \ll (1+|\log(x)|) x^{-j} (1+|t|)^j  e^{-\frac{\pi}{2}|t|}, \quad j=1,2.
	$$

	\paragraph{Range $|t| \ge 2x \ge 2$}
	
	The bounds derived in \cite[Prop.\@ 2]{Kbesselbound} read
	$$
	\frac{\partial^j}{\partial x^j} K_{it}(x) \ll_{\epsilon} x^{-j}|t|^{j-\frac{1}{2}} e^{-\frac{\pi}{2}|t|}
	$$
	for this range, where $j=0,1,2$.

	\paragraph{Range $x \le 1$, $|t| \ge 2$}
	In this range, we make use of the power series expansion of $K_{it}(x)$:
	$$
	K_{it}(x) =  \frac{\pi}{2\sinh(\pi t)}  \sum_{m=0}^{\infty} \left(\frac{(x/2)^{2m-it}}{m! \Gamma(1-it+m)}  - \frac{(x/2)^{2m+it}}{m! \Gamma(1+it+m)} \right).
	$$
	By bounding absolutely, this gives the bound
	$$
	K_{it}(x) \ll |t|^{-\frac{1}{2}} e^{-\frac{\pi}{2}|t|} \exp\left( \frac{x^2}{4} \right) \ll |t|^{-\frac{1}{2}} e^{-\frac{\pi}{2}|t|}.
	$$
	For the derivatives, we similarly find
	$$
	\frac{\partial^j}{\partial x^j} K_{it}(x) \ll  x^{-j} |t|^{j-\frac{1}{2}} e^{-\frac{\pi}{2}|t|}, \quad j=1,2.
	$$
\end{proof}

\begin{lem} \label{lem:Xi-derivative-bounds} We have
	\begin{align*}
		\Xi_{\frac{1}{2}+it}(u) & \ll \begin{cases} 1, & u \le 2, \\ \log(u) u^{-\frac{1}{2}}, & u \ge 2, \end{cases}\\
		\Xi_{\frac{1}{2}+it}'(u) & \ll (1+|t|^2) \begin{cases} 1 , & u \le 2, \\  \log(u) u^{-\frac{3}{2}}, & u \ge 2, \end{cases}\\
		\Xi_{\frac{1}{2}+it}''(u) & \ll (1+|t|^2) \begin{cases} u^{-1} , & u \le 2, \\ \log(u)u^{-\frac{5}{2}} , & u \ge 2. \end{cases}
	\end{align*}
\end{lem}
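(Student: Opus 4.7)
\smallskip

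The plan is to establish all three bounds by combining a convenient integral/series representation for $\Xi_{\frac{1}{2}+it}(u) = {}_2F_1(\tfrac{1}{2}+it,\tfrac{1}{2}-it;1;-u)$ with the defining ODE \eqref{eq:Xi-diff-eq}. For the bound on $\Xi_{\frac{1}{2}+it}(u)$ itself I would split the range at $u = 2$. In the range $u \le 2$, one uses the trivial bound coming from the positivity of the spherical function on the diagonal, namely $|\Xi_{\frac{1}{2}+it}(u)| \le \Xi_{\frac{1}{2}+it}(0) = 1$; alternatively, one can bound the Euler integral representation absolutely. In the range $u \ge 2$, I would invoke the Harish--Chandra asymptotic expansion of the spherical function, which together with a careful estimate of the $c$-function gives a bound of the form $|\Xi_{\frac{1}{2}+it}(u)| \ll (1+|\log u|)\, u^{-\frac{1}{2}}$ uniform in $t \in \mathbb{R}$. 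The logarithmic factor is the conservative choice that covers the potentially bad behaviour when $|t|$ is small, where the two exponents $\frac{1}{2}\pm it$ in the asymptotic expansion almost coincide.

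For the first derivative, I would differentiate the hypergeometric series termwise to obtain
\[
\Xi'_{\frac{1}{2}+it}(u) \;=\; -s(1-s)\, {}_2F_1\!\bigl(s+1,\, 2-s;\, 2;\, -u\bigr),
\]
where $s=\tfrac{1}{2}+it$. The factor $|s(1-s)| = \tfrac{1}{4}+t^2$ immediately produces the $1+|t|^2$ dependence. The remaining hypergeometric has series bounded uniformly on $[0,2]$, yielding the first estimate. For $u \ge 2$, the two exponents in the Harish--Chandra expansion of ${}_2F_1(s+1,2-s;2;-u)$ both have real part $\tfrac{3}{2}$, so the standard asymptotic (cf.\ DLMF 15.8) gives decay $\ll (1+|\log u|)\, u^{-\frac{3}{2}}$, again with a log factor handling the near-confluence of the exponents.

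For the second derivative I would simply solve the ODE \eqref{eq:Xi-diff-eq} for $\Xi''_s(u)$:
\[
\Xi''_s(u) \;=\; -\frac{(2u+1)\Xi'_s(u) + s(1-s)\Xi_s(u)}{u(u+1)},
\]
and substitute the bounds already obtained. For $u \le 2$ one has $u(u+1) \asymp u$ in the relevant sub-range and the numerator is $\ll 1+|t|^2$, giving the claimed $(1+|t|^2) u^{-1}$. For $u \ge 2$, $u(u+1) \asymp u^2$ and the dominant contribution of the numerator is $u \cdot (1+|t|^2)\log(u)\, u^{-\frac{3}{2}} \asymp (1+|t|^2)\log(u)\, u^{-\frac{1}{2}}$, yielding $(1+|t|^2)\log(u) u^{-\frac{5}{2}}$ after division.

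The only mildly delicate step will be the uniform-in-$t$ bound on the spherical function and on the derived ${}_2F_1(s+1,2-s;2;-u)$ for large $u$; the rest is routine manipulation of hypergeometric series or of the ODE. In particular, obtaining the logarithm without losing uniformity in $t$ requires care in the error estimation of the Harish--Chandra expansion, but this is classical and can be extracted from standard references on Legendre functions.
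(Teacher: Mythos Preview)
Your approach is broadly correct and shares the final step (recovering $\Xi''$ from the ODE \eqref{eq:Xi-diff-eq}) with the paper, but the route to the first two bounds differs. The paper works entirely through Mehler's integral representation
\[
P^{\mu}_{-\frac{1}{2}+it}(\cosh\vartheta) \;=\; \sqrt{\tfrac{2}{\pi}}\,\frac{(\sinh\vartheta)^{\mu}}{\Gamma(\tfrac{1}{2}-\mu)}\int_0^{\vartheta}\frac{\cos(\alpha t)}{(\cosh\vartheta-\cosh\alpha)^{\frac{1}{2}+\mu}}\,d\alpha,
\]
applied with $\mu=0$ for $\Xi_s$ and, after the Legendre recurrence $\sinh\vartheta\,(P^{0})'(\cosh\vartheta)=-(\tfrac{1}{4}+t^2)P^{-1}(\cosh\vartheta)$, with $\mu=-1$ for $\Xi'_s$. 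Both cases are handled by the same absolute-value estimate on the Mehler integral, splitting at $\vartheta=1$. Uniformity in $t$ comes for free because $t$ only enters through $|\cos(\alpha t)|\le 1$.

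Your use of positive-definiteness for $|\Xi_{\frac{1}{2}+it}(u)|\le 1$ on $u\le 2$ is cleaner than the paper's Mehler bound there, and your appeal to the Harish--Chandra expansion for large $u$ is essentially the same content as the paper's large-$\vartheta$ estimate. However, there is a genuine gap in your treatment of $\Xi'$ on $[0,2]$: you assert that the series for ${}_2F_1(s+1,2-s;2;-u)$ is ``bounded uniformly on $[0,2]$'', but that series diverges for $u\ge 1$, and even on $[0,1)$ the $n$-th coefficient $\frac{(s+1)_n(2-s)_n}{(2)_n n!}$ has modulus $\prod_{j}((j+\tfrac12)^2+t^2)/((n+1)!\,n!)$ growing with $t$, so uniformity in $t$ is not immediate from termwise estimation or the Euler integral (whose Gamma prefactor blows up in $t$). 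The claim itself is true, but to justify it you need an integral representation in which the $t$-dependence sits in a bounded oscillatory factor --- which is exactly what Mehler's integral with $\mu=-1$ provides. Your identification of the only delicate step as lying at large $u$ is therefore slightly off: the uniform-in-$t$ bound on the derived hypergeometric is already nontrivial on $[0,2]$.
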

\begin{proof} We make use of the explicit description of $\Xi_s(u)= P_{-s}(2u+1)$ in terms of the associated Legendre function. Mehler's integral representation (see 
	\cite[\S 8.715 Eq.\@ 1]{ToI}) for the associated Legendre function is given by
	\begin{equation}\begin{aligned}
			P_{-\frac{1}{2}+it}^{\mu}(\cosh(\vartheta)) &= \sqrt{\frac{2}{\pi}}\frac{ (\sinh(\vartheta))^{\mu}}{\Gamma(\frac{1}{2}-\mu)} \int_0^{\vartheta} \frac{\cos(\alpha t)}{(\cosh(\vartheta)-\cosh(\alpha))^{\frac{1}{2}+\mu}} d\alpha \\
			&= \frac{1}{\sqrt{\pi}} \frac{(\sinh(\vartheta))^{\mu}}{\Gamma(\frac{1}{2}-\mu)} \int_0^{\vartheta} \frac{\cos(\alpha t)}{\left(\sinh\left(\frac{\vartheta-\alpha}{2}\right)\sinh\left(\frac{\vartheta+\alpha}{2}\right)\right)^{\frac{1}{2}+\mu}} d\alpha,
			\label{eq:Legendre-Mehler}
	\end{aligned}\end{equation}
	for $\mu < \frac{1}{2}$. Suppose first that $0 \le \vartheta \le 1$. Then,
	$$
	|P^{\mu}_{-\frac{1}{2}+it}(\cosh(\vartheta))| \ll \frac{\vartheta^{\mu}}{\Gamma(\frac{1}{2}-\mu)} \int_0^{\vartheta} \frac{1}{(\vartheta^2-\alpha^2)^{\frac{1}{2}+\mu}} d\alpha \ll \frac{\vartheta^{-\mu}}{\Gamma(\frac{3}{2}-\mu)}.
	$$
	If $\vartheta \ge 1$, we split the integral in to $[0, \vartheta-1]$ and $[\vartheta-1,\vartheta]$. We have
	$$\begin{aligned}
		|P^{\mu}_{-\frac{1}{2}+it}(\cosh(\vartheta))| &\ll \frac{e^{\mu \vartheta}}{\Gamma(\frac{1}{2}-\mu)} \left[ \int_0^{\vartheta-1} \frac{1}{e^{(\frac{1}{2}+\mu)\vartheta}} d\alpha + \int_{0}^{1} \frac{1}{((1-\alpha)e^{\vartheta})^{\frac{1}{2}+\mu}} d\alpha \right] \\
		&\ll \frac{1}{\Gamma(\frac{1}{2}-\mu)} \vartheta e^{-\frac{\vartheta}{2}}+\frac{1}{\Gamma(\frac{3}{2}-\mu)}e^{-\frac{\vartheta}{2}}.
	\end{aligned}$$
	This gives the desired bound for $\Xi_{-\frac{1}{2}+it}(u)$. For the first derivative of $\Xi_{-\frac{1}{2}+it}(u)$, we make use of the relation \cite[\S 8.731 Eq.\@ 1(2)]{ToI} to write
	$$
	\sinh(\vartheta) (P^{0}_{-\frac{1}{2}+it})^{(1)	}(\cosh(\vartheta)) = -(\tfrac{1}{4}+t^2)  P^{-1}_{-\frac{1}{2}+it}(\cosh(\vartheta))
	$$
	and for the second derivative, we make use of Equation \eqref{eq:Xi-diff-eq}.


\end{proof}

\bibliography{RafBib}
\end{document}